\theoremstyle{plain}
\newtheorem{theorem}{Theorem}[section]
\newtheorem{lemma}[theorem]{Lemma}
\newtheorem{corollary}[theorem]{Corollary}
\newtheorem{proposition}[theorem]{Proposition}
\theoremstyle{definition}
\newtheorem{definition}[theorem]{Definition}
\newtheorem{definition-theorem}[theorem]{Definition-Theorem}
\newtheorem{example}[theorem]{Example}
\theoremstyle{remark}
\newtheorem{remark}[theorem]{Remark}
\numberwithin{equation}{section} \setcounter{tocdepth}{1}
\newcommand{\End}{\mathrm{End}}
\newcommand{\Ker}{\mathrm{Ker}}
\newcommand{\surj}{\to\kern-1.8ex\to}
\newcommand{\cA}{\mathcal{A}}
\newcommand{\Diff}{\mathrm{Diff}}
\newcommand{\cC}{\mathcal{C}}
\newcommand{\Conf}{\mathrm{Conf}}
\newcommand{\Sol}{\mathrm{Sol}}
\newcommand{\Met}{\mathrm{Met}}
\newcommand{\cE}{\mathcal{E}}
\newcommand{\cO}{\mathcal{O}}
\newcommand{\arxiv}[1]{{\tt
		\href{http://www.arXiv.org/abs/#1}{arXiv:#1}}}
\newcommand{\cS}{\mathcal{S}}
\newcommand{\cI}{\mathcal{I}}
\newcommand{\dd}{\mathrm{d}}
\newcommand{\mR}{\mathcal{R}}
\setlist[itemize]{leftmargin=*}
\begin{document}

\title[The Heterotic-Ricci flow and its three-dimensional solitons]{The Heterotic-Ricci flow and its three-dimensional solitons}

\author[Andrei Moroianu]{Andrei Moroianu}
\address{Universit\'e Paris-Saclay, CNRS,  Laboratoire de math\'ematiques d'Orsay, France}
\email{andrei.moroianu@math.cnrs.fr}

\author[\'Angel J. Murcia]{\'Angel J. Murcia}
\address{Istituto Nazionale di Fisica Nucleare, Sezione di Padova, Repubblica Italiana}
\email{angel.murcia@pd.infn.it}

\author[C. S. Shahbazi]{C. S. Shahbazi} \address{Departamento de Matem\'aticas, Universidad UNED - Madrid, Reino de Espa\~na}
\email{cshahbazi@mat.uned.es} 
\address{Fachbereich Mathematik, Universit\"at Hamburg, Deutschland}
\email{carlos.shahbazi@uni-hamburg.de}

\thanks{\'A. J. M. has been supported by a postdoctoral fellowship from the Istituto Nazionale di Fisica Nucleare, Bando 23590. The work of C. S. S. was partially supported by the research Excellency Mar\'ia Zambrano grant and the Leonardo grant \emph{LEO22-2-2155} of the BBVA. A. M. and C. S. S. are grateful to the Mathematisches Forschungsinstitut Oberwolfach for support within the OWRF program. C. S. S. is also grateful to the Simons Center for Geometry and Physics for its hospitality during the later stages of this project and to the organizers and participants of the workshop \emph{Supergravity, Generalized Geometry, and Ricci Flow} for related stimulating discussions and comments on this article.}

\maketitle

\begin{abstract}
We introduce a novel curvature flow, the Heterotic-Ricci flow, as the two-loop renormalization group flow of the Heterotic string common sector and study its three-dimensional compact solitons. The Heterotic-Ricci flow is a coupled curvature evolution flow, depending on a non-negative real parameter $\kappa$, for a complete Riemannian metric and a three-form $H$ on a manifold $M$. Its most salient feature is that it involves several terms quadratic in the curvature tensor of a metric connection with skew-symmetric torsion $H$. When $\kappa = 0$ the Heterotic-Ricci flow reduces to the generalized Ricci flow and hence it can be understood as a modification of the latter via the second-order correction prescribed by Heterotic string theory, whereas when $H=0$ and $\kappa >0$ the Heterotic-Ricci flow reduces to a constrained version of  the RG-2 flow and hence it can be understood as a generalization of the latter via the introduction of the three-form $H$. Solutions of Heterotic supergravity with trivial gauge bundle, which we call \emph{Heterotic solitons}, define a particular class of three-dimensional solitons for the Heterotic-Ricci flow and constitute our main object of study. We prove a number of structural results for three-dimensional Heterotic solitons, obtaining, in particular, the complete classification of compact three-dimensional \emph{strong} Heterotic solitons as hyperbolic three-manifolds or quotients of the Heisenberg group equipped with a left-invariant metric. Furthermore, we prove that all Einstein three-dimensional Heterotic solitons have constant dilaton and leave as open the construction of a Heterotic soliton with non-constant dilaton. In this direction, we prove that Einstein Heterotic solitons with constant dilaton are rigid and therefore cannot be deformed into a solution with non-constant dilaton. This is, to the best of our knowledge, the first rigidity result for compact supergravity solutions in the literature.
\end{abstract}

\setcounter{tocdepth}{1} 
\tableofcontents


\section{Introduction}
\label{sec:intro}



\subsection{Background and motivation}


The main purpose of this article is to introduce a novel curvature flow, namely the Heterotic-Ricci flow, and prove a number of structural results about the particular class of its three-dimensional solitons that can be interpreted as solutions of Heterotic supergravity \cite{BRI,BRII}. The Heterotic-Ricci flow and its associated solitons provide a general framework in which a number of notable curvature flows as well as their solitons, all inspired to some extent by some of the building blocks of Heterotic string theory, appear as natural particular cases, hence giving a conceptual understanding of these flows as limiting cases of a well-defined geometric construction determined by \emph{Heterotic supersymmetry}. The Heterotic-Ricci flow, which we introduce in Definition \ref{def:Heterotic} as the two-loop renormalization group flow of the Heterotic string common sector, involves a real parameter $\kappa$ and evolves a family of metrics $g_t$ and three-forms $H_t$ coupled via a system of equations that contains several quadratic terms in the curvature tensor of the unique metric connection $\nabla^{g_t,H_t}$ with skew-torsion $-H_t$. In particular, the Heterotic-Ricci flow is related to other recent or more classical flows as follows \cite{Phong:2018wgn,PhongReview}:

\begin{itemize}
    \item When $\kappa =0$ and $H_t = 0$, the Heterotic-Ricci flow is the classical Ricci-flow \cite{Hamilton}.
    \item When $H_t=0$, the Heterotic-Ricci flow reduces to the RG 2-flow \cite{GGI} involving a quadratic constraint on the Riemann tensor that is inherited from the Bianchi identity of the system.  To the best of our knowledge this constraint has not been studied in the RG 2-flow literature, and, based on previous experience with anomaly cancellation constraints, it may lead to a better-behaved RG 2-flow.
    \item When $\kappa = 0$, the Heterotic-Ricci flow reduces to the generalized Ricci flow \cite{FernandezStreetsLibro,Oliynyk,Streets0}.
    \item If we consider the Heterotic-Ricci flow on a complex manifold with the trivial canonical bundle and if, using the standard notation in the literature, we set $H_t = -\dd^c \omega_t$  for a family of Hermitian metrics $\omega_t$, then the Bianchi identity of the Heterotic-Ricci flow corresponds to the main equation of the Anomaly flow \cite{Phong:2018wgn,Anomaly,AnomalyII}. 
    \item In the set-up of the previous bullet point, if in addition, we assume $\kappa = 0$ then we obtain the generalized Kähler-Ricci flow \cite{Streets}, which in turn is naturally related to the pluriclosed flow \cite{StreetsTian,StreetsTianII} as explained in \cite{Garcia-FernandezJordanStreets,StreetsTianIII}.
    \item The Heterotic-Ricci flow should be a particular case of the generalized Ricci flow on a string Courant algebroid as introduced in \cite{Garcia-Fernandez:2016ofz}, although this remains to be checked explicitly. Clarifying the precise condition on the underlying Courant algebroid that reduces the flow of \cite{Garcia-Fernandez:2016ofz} to the Heterotic-Ricci flow would clarify the behavior of the latter under T-duality, see also \cite{Severa:2018pag,StreetsT,Hassle}.
    \item Being the renormalization group flow of a string theory, the Heterotic-Ricci flow can be expected to be related to other curvature evolution flows inspired by string theory \cite{Fei:2018mzf,Fei:2020kkl,Fei:2020nwv}, although the precise relationship may not be straightforward and may require non-trivial string theory dualities.
\end{itemize}

\noindent
Naturally, following the previous list, the solitons of the Heterotic-Ricci flow are similarly related to the solitons of the Ricci flow \cite{Cao}, RG-2 flow \cite{GarciaMarinoVazquez,GlickensteinWu}, generalized Ricci flow \cite{FernandezStreetsLibro,StreetsSoliton,StreetsUstinovskiySoliton}, and Anomaly flow \cite{Phong:2018wgn,Anomaly,AnomalyII}, respectively. In particular,  solutions to the celebrated Hull-Strominger system \cite{Fei,FeiYau,FernandezIvanovUgarteVillacampa,FernandezIvanovUgarteVassilev,FuTsengYau,FuYau,Garcia-Fernandez:2018emx,Hull,Strominger} or the coupled Hermite-Einstein equations, recently introduced in  \cite{MarioRaul,MarioRaulII}, are automatically particular solutions of the Einstein equation of Heterotic supergravity and hence are intimately related to the solitons of the Heterotic-Ricci flow. Note that, whereas the Hull-Strominger system involves a single higher curvature term, which appears in its Bianchi identity, the soliton system of the Heterotic-Ricci flow will involve three different higher curvature terms, all of them quadratic in the curvature of a metric connection with skew torsion.

The Heterotic Ricci flow, although neatly packed in terms of curvature tensors with torsion, is a very complex curvature flow. As a first attempt to understand its main properties, we focus in this article on the three-dimensional solitons of the system, and in particular on those admitting an interpretation as solutions of three-dimensional Heterotic supergravity. This leads us to define the \emph{Heterotic system} in Section \ref{sec:HS} together with its strong version, which is obtained by exploiting the internal consistency of the system and which admits a natural higher-dimensional geometric interpretation in terms of the total space of the underlying frame bundle.


\subsection{Main results and overview}


\begin{itemize}
    \item In Section \ref{sec:Heteroticflow} we propose and define the Heterotic-Ricci flow as the two-loop renormalization group flow of the Heterotic string common sector. We reformulate it in three dimensions and we study a particular class of Heterotic-Ricci flows, that we call \emph{homothety flows}, for which the metric evolves only by homotheties. We find the explicit homothety flows and we perform a detailed numerical analysis in the most difficult cases where explicit expressions cannot be obtained. In the latter cases, we obtain explicit numerical examples of eternal and regular flows, showcasing that these flows can be better behaved than their counterparts in the generalized Ricci flow or the RG-2 flow.  

    \item In Section \ref{sec:HS} we introduce the Heterotic soliton system as the equations of motion of Heterotic supergravity with the trivial gauge bundle. Studying the consistency of the system we arrive at the notion of \emph{strong Heterotic soliton}, which we interpret geometrically in terms of the total space of the frame bundle of the underlying Riemannian manifold.

    \item In Section \ref{sec:solitonsHR} we study compact three-dimensional Heterotic solitons. In Theorem \ref{thm:constantdilatonEinstein} we prove that all Einstein three-dimensional Heterotic solitons have constant dilaton. Furthermore,  in Theorem \ref{thm:strongHeterotic3d}  we prove that all three-dimensional compact strong Heterotic solitons have constant dilaton and are either hyperbolic manifolds or quotients of the Heisenberg group.

    \item In Section \ref{sec:rigidity} we study the local deformations of Einstein Heterotic solitons. In particular, in Theorem \ref{thm:rigidity} we prove that all three-dimensional compact Einstein Heterotic solitons are rigid and therefore cannot be deformed into Heterotic solitons with non-constant dilaton. To the best of our knowledge, this seems to be the first rigidity result for a compact solution of a supergravity theory. 
\end{itemize}

\noindent
It should be mentioned that being a new curvature flow, the most fundamental properties of the Heterotic-Ricci flow remain to be investigated, especially its weak parabolicity regime and short-time existence. The latter has been studied in \cite{Buckland,GimreGuentherIsenberg} for the case of the RG 2-flow, proving the short-time existence of the flow assuming a bound on the sectional curvature of the initial data. Similarly, the basic properties of the solitons of the Heterotic-Ricci flow remain to be studied, especially the construction of examples, their higher-categorical interpretation, and the study of the associated moduli and stability. In particular, it would be interesting to extend to the Heterotic-Ricci case the work of \cite{GarciaMarinoVazquez,GlickensteinWu} for solitons of the RG 2-flow. Particularly intriguing is the study of the solitons of the Heterotic-Ricci flow on a complex manifold, with an adapted Ansatz as proposed in \cite{Garcia-FernandezJordanStreets,StreetsSoliton,StreetsUstinovskiySoliton,StreetsUstinovskiySolitonII} for the solitons of the generalized Ricci flow, and their relation with the coupled Hermite-Einstein system of \cite{MarioRaul,MarioRaulII}. Related to the existence of solitons for the flow, we have not been able to construct a three-dimensional Heterotic soliton with non-constant dilaton. Regarding this problem, we do not have enough evidence to make a conjecture in either sense, so constructing an example with non-trivial dilaton or proving that its non-existence constitutes the main open problem that can be derived from this article.


\section{The Heterotic-Ricci flow}
\label{sec:Heteroticflow}


In this section, we introduce the \emph{Heterotic-Ricci flow} as the renormalization group flow of the Neveu-Schwarz sector of the Heterotic string at second order in the string-slope parameter $\kappa$. At first order in $\kappa$ this renormalization group flow corresponds to the \emph{generalized Ricci-flow} \cite{FernandezStreetsLibro,Oliynyk,Streets0}, a fact that we use to propose the Heterotic-Ricci flow as a higher-order curvature correction to the latter. 


\subsection{Preliminaries}
\label{subsec:Preliminaries}


Let $M$ be an oriented manifold $M$ of dimension $d$ equipped with a Riemannian metric $g$. We will denote by $\langle - , -\rangle_g$ the determinant inner product induced by $g$ on the exterior algebra bundle of $M$, and by $\vert - \vert_g$ its associated norm. For every linear connection $\nabla$ on $TM$, we will denote by $\mR^{\nabla}$ its curvature tensor, which in our conventions is defined by the following formula:
\begin{equation*}
\mR^{\nabla}_{u,v} w = \nabla_{u} \nabla_{v} w - \nabla_{v} \nabla_{u} w - \nabla_{[u,v]} w
\end{equation*}

\noindent
for every vector fields $u,v,w\in \mathfrak{X}(M)$. In the following, we will exclusively consider metric-compatible connections on the given Riemannian manifold $(M,g)$, and we will understand their curvature tensors $\mR^{\nabla}$ as sections of $\wedge^2 M\otimes \wedge^2 M$, upon identification of skew-symmetric endomorphisms with two-forms, using the Riemannian metric. Therefore, in our conventions the norm of $\mR^{\nabla}$ is explicitly given by:
\begin{eqnarray*}
\vert \mR^{\nabla} \vert_g^2 : = \langle \mR^{\nabla} , \mR^{\nabla} \rangle_g = \frac{1}{2} \sum_{i,j=1}^d \langle \mR^{\nabla}_{e_i,e_j} , \mR^{\nabla}_{e_i,e_j} \rangle_g = \frac{1}{4}\sum_{i,j,k,l=1}^d \mR^{\nabla}_{e_i,e_j}(e_k,e_l)\, \mR^{\nabla}_{e_i,e_j}(e_k,e_l)
\end{eqnarray*}

\noindent
in terms of any orthonormal frame $(e_i)$, where $\mR^{\nabla}_{e_i,e_j} = e_j \lrcorner e_i\lrcorner \mR^{\nabla}$ denotes evaluation of $e_i$ and $e_j$ in the \emph{first factor} $\wedge^2 M$ of $\mR^{\nabla}$ and $\mR^{\nabla}(e_i,e_j)$ denotes evaluation of $e_i$ and $e_j$ on the \emph{second factor} $\wedge^2 M$ of $\mR^{\nabla}$. For every three-form $H\in \Omega^3(M)$, we define the symmetric bilinear form:
\begin{eqnarray*}
(H\circ_g H)(v_1,v_2) :=   \langle v_1\lrcorner H , v_2\lrcorner H\rangle_g = \frac{1}{2} \sum_{i,j=1}^d H(v_1,e_i,e_j) H(v_2,e_i,e_j)\, , \quad v_1, v_2 \in \mathfrak{X}(M)\, .
\end{eqnarray*}

\noindent
Similarly, we define another symmetric bilinear form associated to the curvature tensor of $\nabla$ by:
\begin{equation*}
(\mR^{\nabla}\circ_g \mR^{\nabla})(v_1,v_2) := \langle v_1\lrcorner \mR^{\nabla}, v_2\lrcorner \mR^{\nabla}\rangle_g = \frac{1}{2}\sum_{i,j,k=1}^d \mR^{\nabla}_{v_1 e_i}(e_j,e_k) \mR^{\nabla}_{v_2 e_i}(e_j, e_k)\, , \quad v_1, v_2 \in \mathfrak{X}(M)\, .
\end{equation*} 

\noindent
Furthermore, associated to the curvature tensor of $\nabla$, we define the four-form:
\begin{equation*}
\langle \mR^{\nabla}\wedge \mR^{\nabla}\rangle_g = \frac{1}{2} \sum_{i,j=1}^d \mR^{\nabla}(e_i,e_j)\wedge \mR^{\nabla}(e_i,e_j)\, .
\end{equation*}

\noindent
 obtained by taking the wedge product on the \emph{first factor}  and the norm of the \emph{second factor} of $\mR^{\nabla}$ in $\wedge^2 M\otimes \wedge^2 M$. Given a three-form $H\in \Omega^3(M)$ and vector field $u\in \mathfrak{X}(M)$ on $(M,g)$, for ease of notation, we will occasionally denote by $H_u\in \Gamma(\End(TM))$ the skew-symmetric endomorphism of $TM$ defined by:
\begin{eqnarray*}
H_u(v) = H(u,v)^{\sharp_g}
\end{eqnarray*}

\noindent
where  $\sharp_g\colon T^{\ast}M \to TM$ is the musical isomorphism induced by $g$. We will denote by the same symbol the extension of $H_u$ to all tensor bundles over $M$ as derivation. In particular, if $\alpha$ is any $p$-form and $(e_i)$ some local orthonormal frame, 
$$H_u(\alpha)=\sum_{i=1}^d H_u(e_i)\wedge e_i\lrcorner \alpha.$$
Given $u, v \in \mathfrak{X}(M)$ the composition of $H_u$ and $H_v$ will be denoted by $H_u\circ H_v\in \Gamma(\End(TM))$, whereas their commutator as endomorphisms will be denoted by $[H_u, H_v] \in \Gamma(\End(TM))$. Similarly, given tensors $\tau_1 , \tau_2 \in \Gamma(T^{\ast}M\otimes T^{\ast}M)$, their commutator as endomorphisms obtained via the underlying Riemannian metric will be denoted simply by $[\tau_1, \tau_2] \in \Gamma(\End(TM))$. For every Riemannian metric $g$ and three-form $H$ on $M$ we define the connection $\nabla^{g,H}$ on the tangent bundle $TM$ as the unique metric  connection on $(M,g)$ with totally skew-symmetric torsion equal to $- H$. The connection $\nabla^{g,H}$ is explicitly given in terms of the Levi-Civita connection $\nabla^g$ of $g$ as follows:
\begin{equation*}
\nabla_{u}^{g,H} v = \nabla_{u}^g v - \frac{1}{2} H_u(v) = \nabla_{u}^g v - \frac{1}{2} H(u,v)^{\sharp_g}\, , \qquad \forall\,\, u
, v \in \mathfrak{X}(M).
\end{equation*}

\noindent
For ease of notation in the following the Riemannian curvature tensor of $g$ will be denoted by $\mR^g$, whereas the curvature tensor of a metric connection with skew-symmetric torsion $\nabla^{g,H}$ will be denoted by $\mR^{g,H}$. Recall the standard relation:
\begin{equation}\label{rgh}
\mR^{g,H}_{u\, v} = \mR^g_{u\, v} - \frac{1}{2} (\nabla^g_u H)(v) + \frac{1}{2} (\nabla^g_v H)(u) + \frac{1}{4} [H_u , H_v] 
\end{equation}

\noindent
between the curvature tensor $\mR^{g,H}$ of $\nabla^{g,H}$ and the Riemann tensor of $g$. Taking the trace of $\mR^{g,H}$ in the previous equation we obtain the Ricci tensor of $\nabla^{g,H}$, given explicitly by:
\begin{eqnarray*}
\mathrm{Ric}^{g,H} = \mathrm{Ric}^{g} - \frac{1}{2} H\circ_{g_t} H + \frac{1}{2} \delta^{g} H.
\end{eqnarray*}

\noindent
This equation will be useful in Section \ref{sec:HS} to understand the Heterotic soliton system.


\subsection{The bosonic string action and the generalized Ricci flow}
\label{subsec:NSNSwordlsheet}


Let $X$ be a compact and oriented real surface and let $M$ be an oriented manifold of dimension $d$. Given a Riemannian metric $g$ on $M$, a two-form $b\in \Omega^2(M)$, and a function $\phi\in  C^{\infty}(M)$, the \emph{bosonic string action} determined by the triple $(g,b,\phi)$ on the pair $(X,M)$ is the action functional:
\begin{equation*}
\cS \colon \Met(X)\times C^{\infty}(X,M) \to \mathbb{R}\, , 
\end{equation*}

\noindent
defined on the space of Riemannian metrics $\Met(X)$ on $X$ and the space of smooth maps $C^{\infty}(X,M)$ from $X$ to $M$ by the following action functional \cite{Polchinski:1998rq}:
\begin{equation*}
\cS[h,\Psi] = - \frac{1}{\kappa} \int_{X}  \left\{ \vert \dd \Psi\vert_{h,g}^2 + \ast_h (\Psi^{\ast} b ) - \kappa \, \phi\circ \Psi\, \mathrm{R}^h\right\}\,\nu_h \, ,
\end{equation*}

\noindent
where $h\in \Met(X)$, $\Psi\in C^{\infty}(X,M)$ and $\kappa$ is a positive real constant. Here $\mathrm{R}^h$ denotes the scalar curvature of $h$, $\dd\Psi\in \Omega^1 (X,TM^{\Psi})$ is the differential of $\Psi$ understood as a one-form on $X$ taking values in the pull-back $TM^{\Psi}$ of $TM$ by $\Psi$ and $\vert \dd \Psi\vert_{h,g}^2\in C^{\infty}(X)$ denotes the standard energy of the map $\Psi\colon X\to M$ computed with respect to the metrics $h$ and $g$. Therefore, the configuration space of the bosonic string action consists of pairs $(h,\Psi)$ given by Riemannian metrics $g$ on $X$ and maps $\Psi\colon X\to M$. This configuration space admits a large automorphism group of transformations that preserves the action functional $\cS$. In particular, $\cS$ is invariant under \emph{Weyl transformations}, namely conformal rescalings of $h$ by a positive real function. These symmetries are particularly important for the physical interpretation of the bosonic string action, as we will remark below.

The triple $(g,b,\phi)$ that determines the action functional $\cS$ is not \emph{dynamical} and is instead interpreted in this framework as the \emph{couplings} of the bosonic string action. Every such choice of data $(g,b,\phi)$ gives rise to a well-defined theory at the \emph{classical level}, that is, at the level of the classical equations of motion that are obtained via the first-order extremization of the action functional $\cS$. In contrast, not every triple $(g,b,\phi)$ leads to a bosonic string action admitting a consistent quantization. The quantization scheme of the bosonic string theory involves a regularization procedure that introduces an \emph{ultra-violet cutoff} $\lambda$ \cite{Polchinski:1998rq}. Through this procedure, physical quantities, in particular, the couplings of the theory, normally acquire a dependence on the scale $\lambda$, in which case the theory is no longer conformally invariant, that is, it is not invariant under Weyl transformations. This implies that Weyl transformations are not guaranteed to be symmetries of the bosonic string at the quantum level, something that cannot be allowed because of the physical consistency of the theory as a theory of \emph{quantum gravity}. The dependence of the couplings of $\cS$ on the renormalization scale $\lambda$ is controlled through the \emph{renormalization group flow equations}, which in the present case are given by:
\begin{equation*}
\frac{\partial g_t}{\partial t} = - \beta_{g_t}\, , \qquad \frac{\partial b_t}{\partial t} = - \beta_{b_t}\, , \qquad \frac{\partial \phi_t}{\partial t} = - \beta_{\phi_t}\, ,
\end{equation*}

\noindent
where $t\in \mathbb{R}$ is the logarithm of the renormalization scale, $(g_t,b_t,\phi_t)$ denotes a one-parameter family of Riemannian metrics, two-forms and functions on $M$ and:
\begin{equation*}
\beta_{g_t}\in \Gamma(T^{\ast}M\odot T^{\ast}M)\, , \quad \beta_{b_t}\in \Omega^2(M)\, ,\quad \beta_{\phi_t}\in C^{\infty}(M)\, ,
\end{equation*}

\noindent
denote the \emph{beta functionals} of $g_t$, $b_t$, and $\phi_t$, respectively, where the latter are being understood as families of coupling fields of the bosonic string action. For Weyl invariance to be preserved in the quantum theory we must have $\beta_g = \beta_b = \beta_{\phi} = 0$ possibly modulo a time-dependent diffeomorphism. Computing the beta functionals of the bosonic string is a complicated task that is usually performed perturbatively in the constant $\kappa$. Let $\mathrm{Ric}^{g_t}$ denote the Ricci tensor of $g_t$. To the lowest order in $\kappa$ we have \cite{Oliynyk,Polchinski:1998rq}:
\begin{eqnarray*}
& \frac{\partial g_t}{\partial t} = -  \kappa (\mathrm{Ric}^{g_t}   - \frac{1}{2} H_t\circ H_t) + o(\kappa^2)\, , \\
& \frac{\partial b_t}{\partial t} = -  \frac{\kappa}{2} \,  \delta^{g_t} H_t + o(\kappa^2)\, , \quad \frac{\partial \phi_t}{\partial t} = c +  \frac{\kappa}{4}\, (-\delta^{g_t}\varphi_t + \vert H_t\vert_{g_t}^2 ) + o(\kappa^2)\, ,
\end{eqnarray*}

\noindent
where $c$ is a constant that depends on the dimension of $M$ and we have set $H_t := \dd b_t$ and $\varphi_t := \dd \phi_t$. Assuming $c=0$, which we will do freely in the following, the previous equations define a weakly parabolic flow whose self-similar solutions are solutions of the bosonic sector of NS-NS supergravity on $M$. By virtue of the evolution equation satisfied by $b_t$, we obtain the following evolution equation for $H_t$:
\begin{equation*}
\frac{\partial H_t}{\partial t} = -  \frac{\kappa}{2} \, \dd\delta^{g_t} H_t + o(\kappa^2)\, .
\end{equation*} 

\noindent
This equation is sometimes considered instead of the evolution equation for $b_t$ as part of the renormalization group flow equations of the NS-NS worldsheet at first order in $\kappa$. Note that the evolution equation for $\phi_t$ decouples and therefore can be considered separately. The \emph{generalized Ricci flow} \cite{FernandezStreetsLibro} can be introduced as the first-order renormalization group flow for $(g_t,H_t)$, which after an appropriate \emph{time} rescaling by $\kappa/2$ is given by the following system of evolution equations:
\begin{equation*}
\frac{\partial g_t}{\partial t} = - 2  \mathrm{Ric}^{g_t}   + \frac{1}{2} H_t\circ H_t\, , \quad \frac{\partial H_t}{\partial t} = -  \dd\delta^{g_t} H_t \, ,
\end{equation*}

\noindent
for pairs $(g_t,H_t)$.


\subsection{The Heterotic extension of the generalized Ricci flow}
\label{subsec:HigherOrder}


The first-order renormalization group flow equations of the bosonic string receive higher-order corrections in $\kappa$ \cite{BeckerBeckerSchwarz}. To compute these corrections, the bosonic string needs to be considered as a subsector of a particular string theory, which can be the bosonic string itself or any of the five superstring theories since all of them have the bosonic string as a common subsector. Considering the bosonic string as the NS-NS truncation of Heterotic string theory, a careful inspection of the effective action of Heterotic supergravity expanded perturbatively in the string slope parameter $\kappa$ together with the computation of the beta functionals of the Heterotic (1,0) worldsheet leads to the following renormalization group flow equations for $(g_t,H_t,\phi_t)$ at second-order in $\kappa$ \cite{BRI,BRII,Metsaev:1987bc,Metsaev:1987zx}:
\begin{eqnarray*}
& \frac{\partial g_t}{\partial t} = -\kappa\, (\mathrm{Ric}^{g_t} - \frac{1}{2} H_t\,\,\circ_{g_t} H_t) - \kappa^2\, \mR^{g_t,H_t} \circ_{g_t}\mR^{g_t,H_t} + o(\kappa^3)\, , \\
& \frac{\partial H_t}{\partial t} = -  \frac{\kappa}{2} \, \dd\delta^{g_t} H_t + o(\kappa^3)\, , \quad  \frac{\partial \phi_t}{\partial t} = \frac{\kappa}{4}\, (\vert H_t\vert_{g_t}^2 -\delta^{g_t}\dd\phi_t -    \kappa\, |\mR^{g_t,H_t}|^2_{g_t}) + o(\kappa^3)\, .
\end{eqnarray*}

\noindent
together with the Bianchi identity for $H_t$:
\begin{equation}
\label{eq:Bianchi}
\dd H_t + \kappa\, \langle\mR^{g_t,H_t} \wedge \mR^{g_t,H_t}\rangle_{g_t}=0\, .
\end{equation}

\noindent
As in the case of the renormalization group flow equations at first order, the evolution equation for the dilaton $\phi_t$ decouples from the evolution equations of $(g_t,H_t)$ and therefore can be considered separately. Proceeding analogously to the generalized Ricci-flow case, we define the \emph{Heterotic-Ricci flow equations} as the renormalization group flow equations of the NS-NS sector of the Heterotic string at second order in $\kappa$, that is:

\begin{definition}
\label{def:Heterotic}
The \emph{Heterotic-Ricci flow} on $M$ is the following differential system:
\begin{eqnarray*}
& \partial_t g_t = -2 \, (\mathrm{Ric}^{g_t}  - \frac{1}{2} H_t\,\,\circ_{g_t} H_t) -  2 \kappa \,\mR^{g_t,H_t} \circ_{g_t} \mR^{g_t,H_t} \, , \quad  \partial_t H_t = - \dd\delta^{g_t} H_t\, , \\ 
& \dd H_t +\kappa\, \langle \mR^{g_t,H_t} \wedge \mR^{g_t,H_t}\rangle_{g_t} = 0 
\end{eqnarray*}

\noindent
for families $(g_t,H_t)$ of Riemannian metrics and three forms on $M$.  
\end{definition}

\noindent
We will refer to solutions to the Heterotic-Ricci flow simply as \emph{Heterotic-Ricci flows}. Clearly, when $\kappa = 0$ we recover the generalized Ricci flow whereas when $H_t=0$ the Heterotic-Ricci flow reduces to the RG-2 flow \cite{GGI} subject to the intriguing constraint:
\begin{equation*}
\kappa\, \langle\mR^{g_t,H_t} \wedge \mR^{g_t,H_t}\rangle_{g_t}=0 
\end{equation*}

\noindent
which to the best of our knowledge has not been studied in the RG-2 flow literature. It would be interesting to investigate precisely how the previous equation constrains the RG 2-flow and how it affects its long-time behavior and existence. We will refer to solutions $(g_t,H_t)$ to the Heterotic-Ricci flow equations simply as Heterotic-Ricci flows. Note that a Heterotic-Ricci flow $(g_t,H_t)$ can be completed into a solution of the two-loop renormalization group flow equations of the NS-NS sector of the Heterotic string if and only if there exists a closed one-form $\varphi_0\in \Omega^1(M)$ and a family of functions $\phi_t$ such that the following equation, to which we will refer as the \emph{dilaton flow equation}, is satisfied:
\begin{equation}
\label{eq:flowdilaton}
\frac{\partial \phi_t}{\partial t} = \frac{1}{2}\, (\vert H_t\vert_{g_t}^2 -\delta^{g_t}\varphi_t -    \kappa\, |\mR^{g_t,H_t}|^2_{g_t})\, ,
\end{equation}

\noindent
where we have set $\varphi_t = \varphi_0 + \dd \phi_t$. Note that if $\kappa =0$ the previous equation reduces to the dilaton flow as defined in \cite{StreetsII}. It would be very interesting to see if the dilaton flow with $\kappa\neq 0$ can have the same type of applications for the Heterotic-Ricci flow as it does when $\kappa = 0$ for the generalized Ricci flow, as explained in \cite{StreetsII}.

\begin{definition}
Let $(g_t,H_t)$ be a Heterotic-Ricci flow, and let $\sigma \in H^1(M,\mathbb{R})$ be a cohomology class. A dilaton for $(g_t,H_t)$ with Lee class $\sigma$ is a closed one-form $(\varphi_0)$ on $M$ such that $\sigma = [\varphi_0]$ and a family of functions $(\phi_t)$ which solves the dilaton flow equation \eqref{eq:flowdilaton} for $\varphi_t := \varphi_0 + \dd\phi_t$.  
\end{definition}

\noindent
Note that the term $\mR^{g_t,H_t} \circ_{g_t} \mR^{g_t,H_t}$ involves quadratic terms in the Riemann tensor of $g_t$ as well as terms involving covariant derivatives of $H_t$ with respect to the Levi-Civita connection $\nabla^{g_t}$. On the other hand, given a Heterotic-Ricci flow $(g_t,H_t)$ the dilaton evolution equation becomes an inhomogeneous linear heat equation for $\phi_t$ on $M$, and therefore standard theory applies regarding the existence and uniqueness of its solutions.

\begin{proposition}
Let $(g_t,H_t)$ be a Heterotic-Ricci flow on a compact manifold $M$ and defined on the interval $\cI$. Then, for each cohomology class $\sigma\in H^1(M,\mathbb{R})$ there exists a unique dilaton $\varphi_t$ with Lee class $\sigma$ associated to $(g_t,H_t)$. 
\end{proposition}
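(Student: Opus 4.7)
The plan is to recast the dilaton flow equation \eqref{eq:flowdilaton}, after fixing a closed representative of the cohomology class $\sigma$, as a standard inhomogeneous linear parabolic PDE on $M$, and then invoke classical theory.

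First, I would fix a closed representative $\varphi_0 \in \Omega^1(M)$ of $\sigma$; for definiteness one may take the unique $g_{t_0}$-harmonic representative for some chosen $t_0 \in \cI$. Any dilaton with Lee class $\sigma$ is then of the form $\varphi_t = \varphi_0 + \dd\phi_t$ for a family of smooth functions $\phi_t \in C^\infty(M)$, so the unknown is reduced to the scalar family $\phi_t$.

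Second, substituting $\varphi_t = \varphi_0 + \dd\phi_t$ into \eqref{eq:flowdilaton} and using $\delta^{g_t}\varphi_t = \delta^{g_t}\varphi_0 + \delta^{g_t}\dd\phi_t$, the flow becomes
\begin{equation*}
\partial_t \phi_t + \tfrac{1}{2}\,\delta^{g_t}\dd\phi_t = F_t, \qquad F_t := \tfrac{1}{2}\bigl(|H_t|_{g_t}^2 - \delta^{g_t}\varphi_0 - \kappa\,|\mR^{g_t,H_t}|_{g_t}^2\bigr),
\end{equation*}
which is an inhomogeneous linear parabolic equation on the compact manifold $M$. Its principal part is the (positive) Laplacian $\delta^{g_t}\dd$ of $g_t$, whose coefficients depend smoothly on $(x,t)\in M\times \cI$ because $(g_t,H_t)$ does; the source $F_t$ is likewise smooth in $(x,t)$, as all quantities entering it are smooth functions of the known data.

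Third, classical linear parabolic theory on a closed Riemannian manifold with smooth time-dependent coefficients (for instance via semigroup methods applied to the Friedrichs extension of the $t$-dependent Laplacian, or via Galerkin approximation combined with standard energy estimates) produces, for any prescribed initial value $\phi_{t_0}\in C^\infty(M)$, a unique smooth solution $\phi_t$ defined on the whole interval $\cI$. Setting $\varphi_t := \varphi_0 + \dd\phi_t$ then yields a dilaton with Lee class $\sigma$, which establishes existence.

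Fourth, for uniqueness I would check that $\varphi_t$ is insensitive to the choice of representative: replacing $\varphi_0$ by $\varphi_0 + \dd f$ with $f\in C^\infty(M)$ shifts the source to $F_t - \tfrac{1}{2}\delta^{g_t}\dd f$, and the substitution $\phi_t \mapsto \phi_t - f$ converts the modified problem back to the original one, so that $\varphi_t$ is unchanged. Together with the uniqueness of solutions of the linear heat equation above (at a prescribed initial value of $\phi_t$, which is the only remaining ambiguity and is absorbed precisely by the gauge freedom in the representative of $\sigma$), this yields the uniqueness of $\varphi_t$. The argument being a routine reduction to standard linear parabolic theory, no substantial analytic obstacle arises; the only point meriting care is the bookkeeping in the final step, ensuring that the gauge freedom in $\varphi_0$ is exactly matched by the freedom in the initial value of $\phi_t$.
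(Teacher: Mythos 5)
Your proof follows essentially the same route as the paper's: fix a closed representative $\varphi_0$ of $\sigma$, reduce \eqref{eq:flowdilaton} to an inhomogeneous linear heat equation for the scalar family $\phi_t$ with smooth time-dependent coefficients, and invoke standard linear parabolic theory on the closed manifold $M$ (the paper cites Aubin, Theorem 4.47, and normalizes $\phi_0=0$). Your extra bookkeeping in the final step, matching the gauge freedom in $\varphi_0$ against the freedom in the initial value of $\phi_t$, is precisely the content the paper defers to the remark immediately following the proposition.
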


\begin{proof}
Let $(g_t,H_t)$ be a Heterotic-Ricci flow. Given a cohomology class $\sigma \in H^1(M,\mathbb{R})$, fix a closed one-form $\varphi_0\in \Omega^1(M)$ such that $\sigma = [\varphi_0]$ and write $\varphi_t = \varphi_0 + \dd\phi_t$ in terms of a family of functions $\phi_t$. Plugging $\varphi_t = \varphi_0 + \dd\phi_t$ in the dilaton equation \eqref{eq:flowdilaton} we obtain:
\begin{equation*}
\frac{\partial \phi_t}{\partial t} = \frac{1}{2}\, ( - \Delta_{g_t}\phi_t + \vert H_t\vert_{g_t}^2  -    \kappa\, |\mR^{g_t,H_t}|^2_{g_t} - \delta^{g_t}\varphi_0 )\, ,
\end{equation*}

\noindent
where $(g_t,H_t)$ is considered as given data and $\Delta_{g_t} = (\dd\delta^{g_t} + \delta^{g_t}\dd)$ denotes the Laplace-de-Rham operator. This is a linear parabolic equation for $(\phi_t)$ and by \cite[Theorem 4.47]{Aubin} it admits a unique smooth solution on $\cI$ such that $\phi_0 = 0$.
\end{proof}

\begin{remark}
If $\varphi_t = \varphi_0 + \dd\phi_t$ and $\varphi_t^{\prime} = \varphi^{\prime}_0 + \dd\phi^{\prime}_t$ are two dilatons with the same Lee class and associated to the same Heterotic-Ricci flow on a compact manifold $M$, then $\phi_t^{\prime} = \phi_t + f$ for a function $f$ on $M$ unique modulo an additive constant.
\end{remark}

\noindent
By the previous proposition, we will not be concerned with the dilaton equation in the sequel. 

\begin{example}
\label{ep:2dHR}
When $M$ is two-dimensional we have $H_t=0$ identically and the Heterotic-Ricci flow equations reduce to the RG-2 flow equations \cite{GGI}:
\begin{eqnarray*}
& \frac{\partial g_t}{\partial t} = -2 \,  \mathrm{Ric}^{g_t}  -  2 \kappa \, \mathrm{R}^{g_t} \circ_{g_t}\mathrm{R}^{g_t} \, ,  
\end{eqnarray*}

\noindent
where $\mathrm{R}^{g_t}$ is the Riemann tensor of $g_t$. As explained in \cite{Oliynyk:2009rh}, in this dimension the RG-2 flow equations simplify to:
\begin{equation*}
\frac{\partial g_t}{\partial t} = - (s_{g_t} + \frac{\kappa}{2}s_{g_t}^2) \, g_t\, ,
\end{equation*}

\noindent
where $s_{g_t}$ denotes the scalar curvature of $g_t$. We refer the reader to \cite{Oliynyk:2009rh} for more details. 
\end{example}

\begin{remark}
\label{remark:stringstructure}
The proper \emph{gauge theoretic} formulation of the Heterotic-Ricci flow in terms of the \emph{b-field} instead of the three-form flux $H$ requires understanding the Heterotic-Ricci flow as a system of differential equations either on a string algebroid, as a particular case of \cite{Garcia-Fernandez:2016ofz}, or on a string structure on the frame bundle of $M$, see \cite{Killingback,Redden,Waldorf} and references therein for more details. This perspective is crucial to properly understand the groupoid of automorphisms of the flow equations as well as its moduli of solitons. 
\end{remark} 
 

\subsection{Heterotic-Ricci flows on three-manifolds}


As remarked in Example \ref{ep:2dHR}, in two dimensions the Heterotic-Ricci flow reduces to the RG-2 flow \cite{GGI}, a curvature flow that is well-known to arise through a second-order correction to the Ricci-flow as prescribed by the renormalization group flow of a metric non-linear sigma model. In three dimensions the Heterotic-Ricci flow equations reduce to:
\begin{eqnarray}
\label{eq:3dHRflow}
\partial_t g_t  = -2 \, (\mathrm{Ric}^{g_t}  - \frac{1}{4} H_t\circ H_t) -  2 \kappa \, \mR^{g_t,H_t} \circ_{g_t}\mR^{g_t,H_t} \, , \quad  \partial_t H_t  = - \dd\delta^{g_t} H_t\, ,  
\end{eqnarray}

\noindent
for families $(g_t,H_t)$ of Riemannian metrics and three-forms $H_t$ on $M$. In particular, $H_t$ is closed and the Bianchi identity \eqref{eq:Bianchi} is automatically satisfied for dimensional reasons. Suppose that $H_t$ evolves within the de Rham cohomology class that it determines at $t_0$. We can write:
\begin{equation*}
H_t = H_0 + \dd b_t\, .
\end{equation*}

\noindent
With this assumption equation \eqref{eq:3dHRflow} is equivalent to:
\begin{equation*}
\frac{\partial b_t}{\partial t} = -\delta^{g_t} H_t + \omega_t
\end{equation*}

\noindent
for a family of closed two-forms $(\omega_t)$. Setting this spurious family $(\omega_t)$ of two forms to zero  we obtain the following refined formulation of the Heterotic-Ricci flow:
\begin{eqnarray}
\label{eq:3dHRflowbfield}
\partial_t (g_t + b_t) = -2 \mathrm{Ric}^{g_t,H_t}   -  2 \kappa \, \mR^{g_t,H_t} \circ_{g_t}\mR^{g_t,H_t}  
\end{eqnarray}

\noindent
which now involves a family of Riemannian metrics $(g_t)$ and two-forms $(b_t)$. Indeed, every family $(g_t,b_t)$ solving the previous flow equations naturally yields a solution of equations \eqref{eq:3dHRflow}, although the converse may not be true. This is related to the fact that the proper, gauge-theoretic, formulation of the Heterotic-Ricci flow involves the notion of \emph{b-field} and occurs in terms of connections on a string structure, see Remark \ref{remark:stringstructure}. This point of view requires the development of novel Riemannian tools on string structures and will be considered elsewhere. Here instead we will simplify equations \eqref{eq:3dHRflow} using the fact that the dual of a three-form is a function and the fact that the Riemann tensor is completely determined by the Ricci curvature. Given a pair $(g_t , H_t)$, we write $H_t= f_t \nu_{g_t}$ in terms of the unique family of functions $f_t = \ast_{g_t} H_t\in C^{\infty}(M)$, where $\nu_{g_t}$ denotes the Riemannian volume form associated $g_t$. Consequently, when $M$ is three-dimensional we define the configuration space $\Conf(M)$ of the Heterotic-Ricci flow system on $M$ as the set of tuples $(g_t,f_t)$, where $g_t$ is a family of Riemannian metrics and $f_t$ is a family of functions. Similarly, we define the set of Heterotic-Ricci flows $\Sol_{\kappa}(M)$ on $M$ as the set of pairs $(g_t,f_t)\in\Conf(M)$ such that $(g_t,H_t = \ast_{g_t} f_t)$ is a Heterotic-Ricci flow with parameter $\kappa\in \mathbb{R}$. Using Lemma \ref{lemma:vg} in Appendix \ref{app:curvature3d} we can write the three-dimensional Heterotic-Ricci flow as an evolution differential system for a family of metrics $g_t$ and a family of functions $f_t$ involving exclusively the Ricci tensor and scalar curvature of $g_t$ as the only curvature operators occurring in the system.

\begin{proposition}
\label{prop:eqs3hrf}
A pair $(g_t, f_t)\in \Conf(M)$ is a Heterotic-Ricci flow if and only if it satisfies the following evolution equations:
\begin{eqnarray}
\label{eq:3dHRflowreformulated1}
& \partial_t g_t = 2\kappa\,  \mathrm{Ric}^{g_t} \circ \mathrm{Ric}^{g_t} - (2 + \kappa (2 s^{g_t} - f^2_t) ) \mathrm{Ric}^{g_t}\nonumber \\ 
& +(f_t^2 + \kappa ((s^{g_t})^2 -2 \vert \mathrm{Ric}^{g_t} \vert_{g_t}^2  - \frac{1}{2}\vert \dd f_t \vert_{g_t}^2 - \frac{1}{4} f^4_t) )g_t - \kappa  [\ast_{g_t}\dd f_t,\mathrm{Ric}^{g_t}]  - \frac12\kappa \,\dd f_t \otimes \dd f_t \\
& \partial_t f_t + \frac{1}{2} \mathrm{Tr}_{g_t}(\partial_t g_t) f_t  +  \Delta_{g_t} f_t = 0\, ,\label{eq:3dHRflowreformulated2}
\end{eqnarray}

\noindent
where $f_t = \ast_{g_t} H_t$ and $\Delta_{g_t} = \delta^{g_t}\dd \colon C^{\infty}(M) \to C^{\infty}(M)$ is the Laplace operator on functions.
\end{proposition}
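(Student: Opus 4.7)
The plan is to decouple the Heterotic-Ricci flow \eqref{eq:3dHRflow} into its two equations and rewrite each of them in terms of $(g_t, f_t)$ using $H_t = f_t\,\nu_{g_t}$, exploiting three features specific to dimension three: every three-form is of this form, the Riemannian volume form is parallel for $\nabla^{g_t}$, and the full Riemann tensor is algebraically determined by the Ricci tensor together with the metric. Two immediate simplifications are that $H_t\circ_{g_t}H_t = f_t^{2}\,g_t$, which follows from $v\lrcorner\nu_{g_t}=\ast_{g_t}v^{\flat_{g_t}}$ and the fact that $\ast_{g_t}$ is an isometry on forms, and that $\nabla^{g_t}_u H_t = (u f_t)\,\nu_{g_t}$, which reduces the covariant-derivative terms appearing in \eqref{rgh} to factors of $\dd f_t$.

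The $f_t$-equation \eqref{eq:3dHRflowreformulated2} is the easier one. Differentiating $H_t=f_t\,\nu_{g_t}$ and using the standard identity $\partial_t\nu_{g_t}=\frac{1}{2}\mathrm{Tr}_{g_t}(\partial_t g_t)\,\nu_{g_t}$ gives $\partial_t H_t = \bigl(\partial_t f_t + \frac{1}{2}f_t\,\mathrm{Tr}_{g_t}(\partial_t g_t)\bigr)\nu_{g_t}$; on the other hand, from $\ast_{g_t}(f_t\nu_{g_t})=f_t$ together with the three-dimensional Hodge identity $\delta^{g_t}=-\ast_{g_t}\dd\,\ast_{g_t}$ on three-forms, one computes $\dd\delta^{g_t}H_t = (\Delta_{g_t}f_t)\,\nu_{g_t}$. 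Equating to $-\dd\delta^{g_t}H_t$ and dividing through by $\nu_{g_t}$ delivers \eqref{eq:3dHRflowreformulated2}.

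The main work lies in the $g_t$-equation. Substituting the two preliminary simplifications above into \eqref{rgh} decomposes $\mR^{g_t,H_t}_{uv}$ into the sum of the Riemann tensor of $g_t$, a piece linear in $\dd f_t$ coming from the $(\nabla^{g_t}H_t)$ terms, and a piece quadratic in $f_t$ coming from $\frac{1}{4}[H_u,H_v]$. Squaring this decomposition in the sense of the $\circ_{g_t}$ operation produces several diagonal and cross contributions, each of which is then rewritten using the three-dimensional identity
\[
\mR^{g_t}_{ijkl} = g_{ik}\mathrm{Ric}^{g_t}_{jl} - g_{il}\mathrm{Ric}^{g_t}_{jk} + g_{jl}\mathrm{Ric}^{g_t}_{ik} - g_{jk}\mathrm{Ric}^{g_t}_{il} - \frac{s^{g_t}}{2}(g_{ik}g_{jl} - g_{il}g_{jk})
\]
so that every contraction involving $\mR^{g_t}$ becomes an expression in $\mathrm{Ric}^{g_t}$, $s^{g_t}$ and $g_t$ only. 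Schematically, the $\mR^{g_t}$--$\mR^{g_t}$ diagonal block yields the $\mathrm{Ric}^{g_t}\circ\mathrm{Ric}^{g_t}$, $s^{g_t}\mathrm{Ric}^{g_t}$ and $|\mathrm{Ric}^{g_t}|_{g_t}^{2}\,g_t$ contributions appearing in \eqref{eq:3dHRflowreformulated1}; the $\mR^{g_t}$--$\dd f_t$ cross term produces the commutator $[\ast_{g_t}\dd f_t,\mathrm{Ric}^{g_t}]$; the $\dd f_t$--$\dd f_t$ block yields $|\dd f_t|_{g_t}^{2}\,g_t$ and $\dd f_t\otimes\dd f_t$; and the $f_t^{2}$--$f_t^{2}$ block contributes the $f_t^{4}\,g_t$ term. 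Combining these with $H_t\circ_{g_t}H_t=f_t^{2}g_t$ in the first equation of \eqref{eq:3dHRflow} reproduces \eqref{eq:3dHRflowreformulated1}.

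The main obstacle is the bookkeeping of these numerous contractions with their correct signs and combinatorial factors: tracking which terms are symmetric versus antisymmetric, ensuring that the traceless and trace parts combine into the coefficients appearing in \eqref{eq:3dHRflowreformulated1}, and correctly pairing the two $\wedge^2 M$ factors of $\mR^{g_t,H_t}$ under the $\circ_{g_t}$ operation. This is exactly the content of Lemma \ref{lemma:vg} in Appendix \ref{app:curvature3d}, which performs the three-dimensional algebraic simplification of $\mR^{g_t,H_t}\circ_{g_t}\mR^{g_t,H_t}$ once and for all, so that the proof of the proposition ultimately reduces to a direct substitution of its output into \eqref{eq:3dHRflow}.
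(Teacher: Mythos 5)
Your proposal is correct and follows essentially the same route as the paper: both reduce the metric equation to a direct substitution of the identity $H_t\circ_{g_t}H_t=f_t^2\,g_t$ and of Lemma \ref{lemma:vg} into \eqref{eq:3dHRflow}, and both obtain \eqref{eq:3dHRflowreformulated2} by differentiating $H_t=f_t\,\nu_{g_t}$ in time and dualizing $\dd\delta^{g_t}H_t$. The extra sketch you give of how the curvature square decomposes is really the proof of Lemma \ref{lemma:vg} itself, which the paper likewise delegates to the appendix.
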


\begin{proof}
By definition $(g_t, f_t)\in \Conf(M)$ is a Heterotic-Ricci flow if and only if $(g_t,H_t = f_t \nu_{g_t})$ satisfies equations \eqref{eq:3dHRflow}. Plugging Equation \eqref{eq:lemmavg} into the first equation in \eqref{eq:3dHRflow} and using the identity:
\begin{equation*}
H_t\circ_{g_t} H_t(v_1,v_2) = g_t(v_1,v_2) f^2_t\, ,
\end{equation*}
	
\noindent
it follows that the first equation in \eqref{eq:3dHRflow} is equivalent to Equation \eqref{eq:3dHRflowreformulated1}. For the second equation in \eqref{eq:3dHRflow} we compute:
\begin{equation*}
\partial_t H_t  = \partial_t (f_t \nu_{g_t}) = (\partial_t f_t) \nu_{g_t} + \frac{1}{2} \mathrm{Tr}_{g_t}(\partial_t g_t) f_t \nu_{g_t}\, ,
\end{equation*}
	
\noindent
whence the second equation in \eqref{eq:3dHRflow} is equivalent to:
\begin{eqnarray*}
(\partial_t f_t) \nu_{g_t} + \frac{1}{2} \mathrm{Tr}_{g_t}(\partial_t g_t) f_t \nu_{g_t} =  \dd \ast_{g_t} \dd f_t\, .
\end{eqnarray*}

\noindent
Taking the Hodge dual of this equation we obtain \eqref{eq:3dHRflowreformulated2} and we conclude.
\end{proof} 

 
\subsection{Homothety flows}
\label{subsec:exampleHeteroticRicci}
 

 In this section, we consider a particular class of three-dimensional Heterotic Ricci flows for which the metric evolves by homotheties. Note that,  in contrast with the standard Ricci flow, the Heterotic-Ricci flow is not scale-invariant unless the constant $\kappa$ is rescaled too. Doing this for time-dependent rescalings would introduce a time dependence in $\kappa$. This is a priori not allowed but can be a convenient mechanism to consider, see \cite{CarforaGuenther} for a detailed discussion of this issue in the context of the RG 2-flow. As a consequence, Heterotic-Ricci flows that evolve by homotheties can be non-trivial and may become remarkably complicated. Suppose then that $(g_t ,f_t)$ satisfies:
 \begin{equation*}
g_t =\sigma_t g \, , \qquad \dd f_t = 0\, ,
 \end{equation*}

 \noindent
 where $(\sigma_t)$ is a family of constants and $g$ is a Riemannian metric on $M$. We will call such pairs $(\sigma_t g, f_t)$ \emph{homothety flows}. A direct computation by substitution yields the following characterization of the homothety flows that solve the Heterotic-Ricci flow equations.

 \begin{lemma}
A homothety flow $(\sigma_t g , f_t)$ is a Heterotic-Ricci flow if and only if:
\begin{equation}
\label{eq:homotecyeq}
\frac{1}{2}\partial_t \sigma^2_t\, g = 2\kappa \,\sigma_t\,  \mathrm{Ric}^{g} \circ \mathrm{Ric}^{g} - \left (2 \sigma_t + \kappa \left ( 2 s_{g}  - \frac{\mu^2}{\sigma^2_t}\right ) \right) \mathrm{Ric}^{g} + \left( \kappa \left ( s_g^2  -2  \vert \mathrm{Ric}^{g} \vert_{g}^2  -  \frac{\mu^4}{4 \sigma^4_t}\right ) + \frac{\mu^2}{\sigma_t} \right) \, g  
\end{equation}
\noindent
and $f_t = \mu \sigma_t^{-\frac{3}{2}}$ for a real constant $\mu\in\mathbb{R}$.
 \end{lemma}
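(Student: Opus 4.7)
The strategy is a direct substitution into the two-equation reformulation provided by Proposition~\ref{prop:eqs3hrf}, exploiting the homogeneity properties of the Ricci and scalar curvature operators under a constant conformal rescaling of the metric. Write $g_t = \sigma_t g$, recall that a homothety flow in our sense means that $f_t$ is spatially constant (since $\dd f_t = 0$ and, under the running assumption, $M$ is connected), and fix once and for all the standard scaling identities
\begin{equation*}
\mathrm{Ric}^{\sigma_t g} = \mathrm{Ric}^{g},\qquad s^{\sigma_t g} = \sigma_t^{-1}\, s^{g},\qquad |\mathrm{Ric}^{\sigma_t g}|_{\sigma_t g}^2 = \sigma_t^{-2}\,|\mathrm{Ric}^g|_g^2,\qquad \mathrm{Ric}^{\sigma_t g}\!\circ_{\sigma_t g}\!\mathrm{Ric}^{\sigma_t g} = \sigma_t^{-1}\,\mathrm{Ric}^g\!\circ_g\!\mathrm{Ric}^g,
\end{equation*}
which follow directly from the definitions of these objects in Subsection~\ref{subsec:Preliminaries}. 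Under $\dd f_t = 0$ the three extra terms $\tfrac12|\dd f_t|_{g_t}^2 g_t$, $\kappa[\ast_{g_t}\dd f_t,\mathrm{Ric}^{g_t}]$ and $\tfrac12\kappa\,\dd f_t\otimes \dd f_t$ appearing in \eqref{eq:3dHRflowreformulated1} vanish identically, and the Laplacian $\Delta_{g_t} f_t$ in \eqref{eq:3dHRflowreformulated2} also vanishes.

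The plan is then to handle the two equations in turn. First I would process the evolution equation \eqref{eq:3dHRflowreformulated2} for the dual function. Since $M$ is three-dimensional, $\mathrm{Tr}_{g_t}(\partial_t g_t) = \sigma_t^{-1} \mathrm{Tr}_g(\dot\sigma_t g) = 3\dot\sigma_t/\sigma_t$, so \eqref{eq:3dHRflowreformulated2} collapses to the ordinary differential equation
\begin{equation*}
\dot f_t + \frac{3}{2}\,\frac{\dot\sigma_t}{\sigma_t}\, f_t = 0,
\end{equation*}
whose general solution is $f_t = \mu\,\sigma_t^{-3/2}$ for some constant $\mu\in\RR$. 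This identifies the function $f_t$ uniquely up to the integration constant $\mu$, which is precisely the statement of the lemma for the function part.

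The second step is to plug $f_t = \mu\,\sigma_t^{-3/2}$ and $g_t = \sigma_t g$ into \eqref{eq:3dHRflowreformulated1}. Using the scaling identities above, each term on the right-hand side becomes a scalar multiple (involving powers of $\sigma_t$) of one of the tensors $\mathrm{Ric}^g\!\circ_g\!\mathrm{Ric}^g$, $\mathrm{Ric}^g$, or $g$; the left-hand side is simply $\dot\sigma_t\, g$. I would then multiply the resulting equation through by $\sigma_t$, use the identity $\sigma_t\dot\sigma_t = \tfrac12\partial_t\sigma_t^2$ on the left-hand side, and substitute the explicit values $\sigma_t f_t^2 = \mu^2/\sigma_t^2$, $\sigma_t^2 f_t^2 = \mu^2/\sigma_t$ and $\sigma_t^2 f_t^4 = \mu^4/\sigma_t^4$ on the right-hand side. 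Collecting terms yields precisely \eqref{eq:homotecyeq}, and conversely any solution of \eqref{eq:homotecyeq} together with $f_t = \mu\sigma_t^{-3/2}$ satisfies both \eqref{eq:3dHRflowreformulated1} and \eqref{eq:3dHRflowreformulated2} by reversing the substitution.

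This is essentially a bookkeeping argument with no real obstacle: the only subtle point is to track carefully the $\sigma_t$-powers coming from the scaling of $\mathrm{Ric}\circ_g\mathrm{Ric}$, $|\mathrm{Ric}|_g^2$ and $s_g$, and to make sure that the terms involving $\dd f_t$ are indeed the only ones that drop out. Clearing the rescaling factor by multiplying by $\sigma_t$ is the step that turns the otherwise asymmetric-looking expression into the compact form stated in the lemma.
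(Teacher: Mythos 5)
Your strategy coincides with the paper's: the lemma is stated there with no written proof beyond the phrase ``a direct computation by substitution,'' and your plan spells out exactly that substitution into Proposition~\ref{prop:eqs3hrf}. The reduction of \eqref{eq:3dHRflowreformulated2} to the ODE $\dot f_t+\tfrac32(\dot\sigma_t/\sigma_t)f_t=0$ and hence to $f_t=\mu\,\sigma_t^{-3/2}$ is correct, and your scaling identities for $\mathrm{Ric}$, $s_g$, $\vert\mathrm{Ric}\vert_g^2$ and $\mathrm{Ric}\circ_g\mathrm{Ric}$ under $g\mapsto\sigma_t g$ are the right ones.

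One point deserves a caveat before you assert that the bookkeeping ``yields precisely \eqref{eq:homotecyeq}.'' Using your own (correct) identity $\mathrm{Ric}^{\sigma_t g}\circ_{\sigma_t g}\mathrm{Ric}^{\sigma_t g}=\sigma_t^{-1}\,\mathrm{Ric}^g\circ_g\mathrm{Ric}^g$, the first term of \eqref{eq:3dHRflowreformulated1} becomes $2\kappa\,\sigma_t^{-1}\,\mathrm{Ric}^g\circ_g\mathrm{Ric}^g$, so after multiplying the whole equation by $\sigma_t$ its coefficient is $2\kappa$, not the $2\kappa\,\sigma_t$ displayed in \eqref{eq:homotecyeq}; all the remaining terms do come out exactly as stated. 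So your computation, carried out to the end, produces \eqref{eq:homotecyeq} with $2\kappa\,\mathrm{Ric}^g\circ_g\mathrm{Ric}^g$ in place of $2\kappa\,\sigma_t\,\mathrm{Ric}^g\circ_g\mathrm{Ric}^g$. This is a discrepancy with the printed formula (most plausibly a stray factor of $\sigma_t$ in the paper, since the quadratic curvature term must scale the same way as $\vert\mathrm{Ric}^g\vert_g^2$, which appears without any $\sigma_t$), rather than a flaw in your method; but as written your proof claims to reproduce the displayed equation verbatim, and it will not. You should either flag the mismatch explicitly or state the equation you actually obtain.
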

 
 \noindent
The \emph{initial point} of a homothety flow cannot be arbitrary, as we are showing next. This is the Heterotic-Ricci flow analog of \cite[Theorem 3]{GGI} that was proven in Op. Cit. for the RG 2-flow. 

\begin{proposition}
\label{prop:eins}
Let $(g_t=\sigma_t g , f_t = \mu \sigma_t^{-\frac{3}{2}})$  be a homothety flow solving the Heterotic-Ricci flow with non-constant $\sigma_t$. If $s_g^2+\mu^2\neq 0$, then $(M,g)$ is Einstein.
\end{proposition}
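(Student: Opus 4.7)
The plan is to take the trace-free part of equation \eqref{eq:homotecyeq} (relative to $g$) and reduce the statement to a pointwise linear-independence argument in the scale factor $\sigma_t$.

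First, I note that the left-hand side of \eqref{eq:homotecyeq}, namely $\tfrac{1}{2}\partial_t\sigma_t^2\, g$, and the final summand on its right-hand side are both pure multiples of $g$, so they are annihilated by the $g$-trace-free projection. What remains at every point of $M$ and every $t$ in the interval of existence of the flow is the tensor identity
\begin{equation*}
2\kappa\,\sigma_t\,(\mathrm{Ric}^{g}\circ\mathrm{Ric}^{g})_0 \;=\; \Bigl(2\sigma_t + 2\kappa\,s_g - \frac{\kappa\mu^2}{\sigma_t^2}\Bigr)\Bigl(\mathrm{Ric}^{g}-\frac{s_g}{3}\,g\Bigr),
\end{equation*}
where $(\,\cdot\,)_0$ denotes the trace-free part. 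The key observation is that $\mathrm{Ric}^g$ and $(\mathrm{Ric}^g\circ\mathrm{Ric}^g)_0$ are fixed tensors on $M$ (independent of $t$), whereas the scalar coefficients depend on $t$ only through $\sigma_t$.

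Next, I proceed pointwise. Fix $p\in M$ and suppose for contradiction that $(\mathrm{Ric}^g-\tfrac{s_g}{3}g)(p)\neq 0$. If $\kappa=0$, the identity immediately becomes $0 = 2\sigma_t\,(\mathrm{Ric}^g-\tfrac{s_g}{3}g)(p)$, which is absurd since $\sigma_t>0$. So assume $\kappa\neq 0$ and divide by $2\kappa\sigma_t$, obtaining
\begin{equation*}
(\mathrm{Ric}^g\circ\mathrm{Ric}^g)_0(p) \;=\; \left(\frac{1}{\kappa} + \frac{s_g(p)}{\sigma_t} - \frac{\mu^2}{2\sigma_t^3}\right)\Bigl(\mathrm{Ric}^g-\frac{s_g}{3}\,g\Bigr)(p).
\end{equation*}
Because both the left-hand side and the tensor factor on the right-hand side at $p$ are $t$-independent, and the latter is nonzero, the scalar coefficient in brackets must be independent of $t$.

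The final step is a linear-independence argument. Since $\sigma_t$ is non-constant and continuous, its image is a non-degenerate interval $I\subset(0,\infty)$, on which $1,\, 1/\sigma,\, 1/\sigma^3$ are linearly independent. Constancy of the bracket therefore forces $s_g(p)=0$ and $\mu=0$, contradicting the hypothesis $s_g^2(p)+\mu^2\neq 0$. Hence the trace-free Ricci vanishes everywhere and $\mathrm{Ric}^g=\tfrac{s_g}{3}g$ pointwise on $M$; the contracted second Bianchi identity then makes $s_g$ constant, so $(M,g)$ is Einstein. The main (mild) obstacle I foresee is simply the bookkeeping in extracting the trace-free part from \eqref{eq:homotecyeq}; once that is done, the rest of the argument is essentially elementary.
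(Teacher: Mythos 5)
Your proof is correct and follows essentially the same route as the paper: taking the $g$-trace-free part of \eqref{eq:homotecyeq} is equivalent to the paper's step of diagonalizing $\mathrm{Ric}^g$ and subtracting the resulting scalar equations pairwise, and your bracketed coefficient $\tfrac{1}{\kappa}+\tfrac{s_g}{\sigma_t}-\tfrac{\mu^2}{2\sigma_t^3}$ is exactly the paper's $F_t$, whose non-constancy in $t$ (unless $s_g=\mu=0$) is the common crux. Your explicit treatment of $\kappa=0$ and the closing appeal to Schur's lemma are minor refinements the paper leaves implicit.
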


\begin{proof}
We can diagonalize $\mathrm{Ric}^g$ in Equation \eqref{eq:homotecyeq}, obtaining:
\begin{equation*}
\sigma_t\partial_t \sigma_t  = 2\kappa \,\sigma_t\,  \lambda^2_i - \left (2 \sigma_t + \kappa \left ( 2 s_{g}  - \frac{\mu^2}{\sigma^2_t}\right ) \right ) \lambda_i + \left( \kappa \left( s_g^2  -2  \vert \mathrm{Ric}^{g} \vert_{g}^2  -  \frac{\mu^4}{4 \sigma^4_t}\right) + \frac{\mu^2}{\sigma_t} \right)    \, ,
\end{equation*}

\noindent
where $\lambda_i$, $i=1,2,3$,  are principal Ricci curvatures of $g$. Subtracting  the previous equations by pairs, we obtain:  
\begin{equation*}
  (\lambda^2_1 - \lambda^2_2) = F_t (\lambda_1 - \lambda_2) \, , \quad (\lambda^2_1 - \lambda^2_3) = F_t (\lambda_1 - \lambda_3)   \, ,\quad (\lambda^2_2 - \lambda^2_3) = F_t (\lambda_2 - \lambda_3)     \, ,
\end{equation*}
\noindent
where:
\begin{equation*}
F_t := \frac{1}{\kappa}  +   \left ( \frac{s_{g}}{\sigma_t}  - \frac{\mu^2}{2\sigma^3_t}\right )  \, .
\end{equation*}
\noindent
If either $s_g\neq 0$ or $\mu \neq 0$ then $F_t$ is not constant and we obtain $\lambda_1 = \lambda_2 = \lambda_3$. 
\end{proof}
\begin{remark}
\label{rem:ne}
Condition $s_g^2+\mu^2 \neq 0$ in Proposition \ref{prop:eins} is necessary. In fact, consider the Riemannian Lie group $(\mathrm{SU}(2),g)$ for which the following the following left-invariant frame $\{e_1,e_2,e_3\}$ is orthonormal:
\begin{equation*}
    [e_2,e_3]=\frac{1}{2 \sqrt{\kappa}} e_1\,, \quad [e_3,e_1]=\frac{1}{2 \sqrt{\kappa}} e_2\,, \quad [e_1,e_2]=\frac{2}{\sqrt{\kappa}}e_3\,.
\end{equation*}
Direct computation reveals that $\mathrm{Ric}^g=-\kappa^{-1} g+ 3 \kappa^{-1} e_3 \otimes e_3$. Clearly, the eigenvalues of $\mathrm{Ric}^g$ are $\lambda_1=\lambda_2=-\kappa^{-1}$ and $\lambda_3=2\kappa^{-1}$, so that $s_g=0$. If $W$ is the Lambert W function, defined as the inverse of the function $p(x)=x e^x$ when restricted to the interval $x \in (-e^{-1},+\infty)$, it can be checked that:
\begin{equation*}
\sigma_t=3+3 W\left (-\frac{2}3 \mathrm{Exp}\left[ \frac 23 \left (2t/\kappa-1\right ) \right ] \right)
\end{equation*}
 solves Equation \eqref{eq:homotecyeq} for $s_g=\mu=0$ and for the choice $(\mathrm{SU}(2),g)$, which is not Einstein. Observe that this particular $\sigma_t$ is defined for $t \in (-\infty,t_{\mathrm{max}})$, where $t_{\mathrm{max}}=\kappa/4(\log(27/8)-1)$.
\end{remark}
\noindent
Assuming that $g$ is Einstein in Equation \eqref{eq:homotecyeq} we obtain the following ordinary differential equation:
 \begin{equation}
 \label{eq:sigmatgeneral}
 \frac{1}{2}\partial_t \sigma^2_t = \left (\frac{2\kappa\, s_g}{3} - 2\right ) \frac{s_g}{3} \sigma_t - \frac{\kappa\, s^2_g}{3} + \frac{\mu^2}{\sigma_t} - \frac{\kappa\, s_g \,\mu^2}{\sigma_t^2} - \frac{\kappa \,\mu^4}{4 \sigma^4_t}    
 \end{equation}

 \noindent
 controlling the evolution of $\sigma_t$. If we set $\kappa = 0$ and $\mu=0$ we recover the standard linear behavior of $\sigma_t$ that corresponds to the Ricci flow case. If we just set $\mu=0$ we obtain the differential equation that controls $\sigma_t$ according to the RG-2 flow, namely:
 \begin{equation*}
\frac{1}{2}\partial_t \sigma^2_t = \left (\frac{2\kappa\, s_g}{3} - 2\right ) \frac{s_g}{3} \sigma_t - \frac{\kappa\, s^2_g}{3} \, .      
 \end{equation*}

 \noindent
Although the standard existence and uniqueness theorems for ordinary differential equations guarantee the existence of a solution to \eqref{eq:sigmatgeneral} in some short \emph{time} interval, the equation is in the general case too complex to admit a solution by elementary functions.  After possibly rescaling the metric $g$, we can set $s_g=1$, $s_g = 0$ or $s_g =-1$ obtaining respectively, the \emph{positive} homothety flow:
\begin{equation}
 \label{eq:sigmatpositive}
 \frac{1}{2}\partial_t \sigma^2_t = 2\,\left (\frac{\kappa}{9} - \frac{1}{3}\right ) \sigma_t - \frac{\kappa}{3} + \frac{\mu^2}{\sigma_t} - \frac{\kappa\,\mu^2}{\sigma_t^2} - \frac{\kappa \,\mu^4}{4 \sigma^4_t}    \, .
 \end{equation}
 \noindent
 the \emph{flat} homothety flow:
 \begin{equation}
 \label{eq:sigmatflat}
 \frac{1}{2}\partial_t \sigma^2_t  =   \frac{\mu^2}{\sigma_t} -  \frac{\kappa \,\mu^4}{4\,\sigma^4_t}  \, ,
 \end{equation}
 and the \emph{negative} homothety flow:
 \begin{equation}
 \label{eq:sigmatnegative}
 \frac{1}{2}\partial_t \sigma^2_t = 2\,\left (\frac{\kappa}{9} + \frac{1}{3}\right)\sigma_t - \frac{\kappa}{3} + \frac{\mu^2}{\sigma_t} + \frac{\kappa \,\mu^2}{\sigma_t^2} - \frac{\kappa \,\mu^4}{4 \sigma^4_t}    \, .
 \end{equation}
 \noindent
 It is interesting to note that whereas in the flat case $s_g = 0$ both the Ricci flow and RG-2 flow produce trivial homothety flows, the homothety Heterotic-Ricci flow is non-trivial and is instead prescribed by Equation \eqref{eq:sigmatflat}. This equation perfectly illustrates a phenomenon that is a common theme of the Heterotic-Ricci flow, that is, a \emph{positive} contribution corresponding to the term appearing in the generalized Ricci flow together with an opposite \emph{negative} contribution corresponding to the higher-order correction introduced by the Heterotic-Ricci flow.  

\begin{remark}
\label{remark:positivenegative}
Although the terms \emph{positive} and \emph{negative} in the previous paragraph have no specific meaning, in the original Lorentzian formulation of the system theory, they can be traced back to the fact that the higher-order correction $\mR^{g,H}\circ_g\mR^{g,H}$ present in the Heterotic system gives a \emph{negative energy} contribution that is of opposite sign to the contribution given by the other terms of the system. 
\end{remark}

\noindent
Let us study separately the positive, flat and negative cases.

\subsubsection{Positive homothety flows}
Finding explicit solutions to Equation \eqref{eq:sigmatpositive} is quite challenging, but it is possible to examine some qualitative and numerical features of its solutions. The crucial object to study is the right-hand side of Equation \eqref{eq:sigmatpositive}, which determines the derivative of $\sigma_t$:
\begin{equation*}
    F_p(\kappa,\mu,y)=2\,\left (\frac{\kappa}{9} - \frac{1}{3}\right ) y - \frac{\kappa}{3} + \frac{\mu^2}{y} - \frac{\kappa\,\mu^2}{y^2} - \frac{\kappa \,\mu^4}{4 y^4}    \, .
\end{equation*}
The first aspect to analyze is the existence of static solutions, i.e., $\sigma_t=1$ for every $t \in \mathbb{R}$. Setting $F_p(\kappa,\mu,1)=0$, we find a one-parameter family of values of $(\kappa,\mu)$ satisfying such condition, which is given by:
\begin{equation*}
    \kappa(\mu)=\kappa_{\mathrm{crit}}^p(\mu)=\frac{36 \mu^2-24}{9\mu^2(\mu^2+4)+4}\,.
\end{equation*}
Note that $\kappa_{\mathrm{crit}}^p(\mu) \geq 0$ if and only if $\mu^2 \geq 2/3$. Whenever $\kappa \neq \kappa_{\mathrm{crit}}^p(\mu)$, the solution is no longer static. We observe that:
\begin{itemize}
    \item When considering pairs $(\kappa,\mu)$ such that $0<\kappa<\kappa_{\mathrm{crit}}^p(\mu)$, it turns out that $F_p(\kappa,\mu,1)>0$. Sticking to these values of $(\kappa,\mu)$, one notices that there exist $y_1 \in (0,1)$ and $y_2>1$ such that  $F_p(\kappa,\mu,y_1)=F_p(\kappa,\mu,y_2)=0$. Since the vanishing of $F_p(\kappa,\mu,t)$ implies the vanishing of $\partial_t\sigma_t$, we conclude that all solutions $\sigma_t$ to Equation \eqref{eq:sigmatpositive} with $\kappa < \kappa_{\mathrm{crit}}^p(\mu)$ are well-defined for all times $t \in \mathbb{R}$, satisfying that $1> \lim_{t \rightarrow -\infty} \sigma_t>0$ and $+\infty > \lim_{t \rightarrow +\infty} \sigma_t>1$. Therefore, they define eternal and regular (in the sense that they asymptote to finite values as $t \rightarrow \pm \infty$) flows, which do not exist for the Ricci flow case $\kappa=\mu=0$. Some solutions of this type are illustrated in Figure \ref{fig:graphs} (left). 
     \item For pairs $(\kappa,\mu)$ such that $\kappa> \mathrm{max}(0,\kappa_{\mathrm{crit}}^p(\mu))$, a more detailed examination is required:
     \begin{itemize} 
         \item If $\mu^2$ is greater than the unique real and positive solution of the algebraic equation\footnote{This equation is obtained after setting $F_p=\partial_y F_p=0$, imposing $y=1$ and identifying $x=\mu^2$.} $27 x^3 +6 x^2-68 x-8=0$, then all solutions $\sigma_t$ are seen to collapse at finite time. This is due to the fact that $F_p(\kappa,\mu,1)<0$, while $F_p(\kappa,\mu,y)$ decreases as $y$ goes to zero (so that $\sigma_t$ approaches zero at some finite time).
         \item Otherwise, there is an interval $\kappa \in (\mathrm{max}(0,\kappa_{\mathrm{crit}}^p(\mu)),\kappa_0(\mu))$ for which the solution is seen to exist for all times (diverging nonetheless  as $t \rightarrow -\infty$, while remaining finite for $t\rightarrow +\infty$), thus yielding an eternal flow. The value $\kappa_0(\mu)$ can be found by solving the equations:
         \begin{equation*}
             F_p(\kappa_0(\mu),\mu,y_0)=0\, , \quad \partial_{y} F_p(\kappa_0(\mu),\mu,y_0)=0\,,
         \end{equation*}
         for $y_0 \in (0,1)$. If $\kappa>\kappa_0(\mu)$, then solutions collapse at finite time.
     \end{itemize}
     On the other hand, if $\mu=0$ and $\kappa>0$, the solution is seen to collapse at finite time, while for $\mu=\kappa=0$ we have that $\sigma_t=-2t/3+1$. Finally, for $\kappa=0$, if $\mu^2<2/3$ we have that the flow is eternal, diverging as $t \rightarrow -\infty$ and remaining finite as $t \rightarrow +\infty$, while it collapses at finite time if $\mu^2>2/3$ (the case $\mu^2=2/3$ corresponds to the static one).
\end{itemize}

\subsubsection{Flat homothety flows}
In this case, it is possible to find an explicit solution to Equation \eqref{eq:sigmatflat}. Assuming $\mu\neq 0$ and defining $b=4/(\kappa \mu^2)-1$, the solution can be seen to be:
\begin{equation}
    \sigma_t=\left\lbrace \begin{matrix}\left ( \frac{\kappa \mu^2}{4}\right)^{1/3} \left (1+W\left ( b \, e^{12 t/\kappa+b} \right ) \right )^{1/3} \, , & b \neq 0 \\  \\ 1 \, , \quad & b=0\end{matrix} \right.
\end{equation}
where we have used the Lambert W function (introduced in Remark \ref{rem:ne}). It can be easily checked that for $b \geq 0$ the corresponding solution $\sigma_t$ exists for all times (for $b$ strictly positive, it tends to $\left ( \frac{\kappa \mu^2}{4}\right)^{1/3}$  as $t \rightarrow -\infty$ and grows indefinitely as $t \rightarrow +\infty$), while for $b<0$ the solution is only defined in the interval $(-\infty, t_\ast)$, where $t_\ast=-\kappa/12 (1+b+\log(-b))$. In this latter case, the solution also asymptotes to $\left ( \frac{\kappa \mu^2}{4}\right)^{1/3}$ as $t \rightarrow -\infty$.

\noindent
On the other hand, if $\mu=0$ the only solution corresponds to the static one, namely $\sigma_t=1$, whereas for $\kappa=0$ the solution reads $\sigma_t=(1+3t \mu^2)^{1/3}$. The latter is defined for $t \in (-(3\mu^2)^{-1}, +\infty)$.

\begin{figure}[ht]
    \centering
    \includegraphics[scale=0.33]{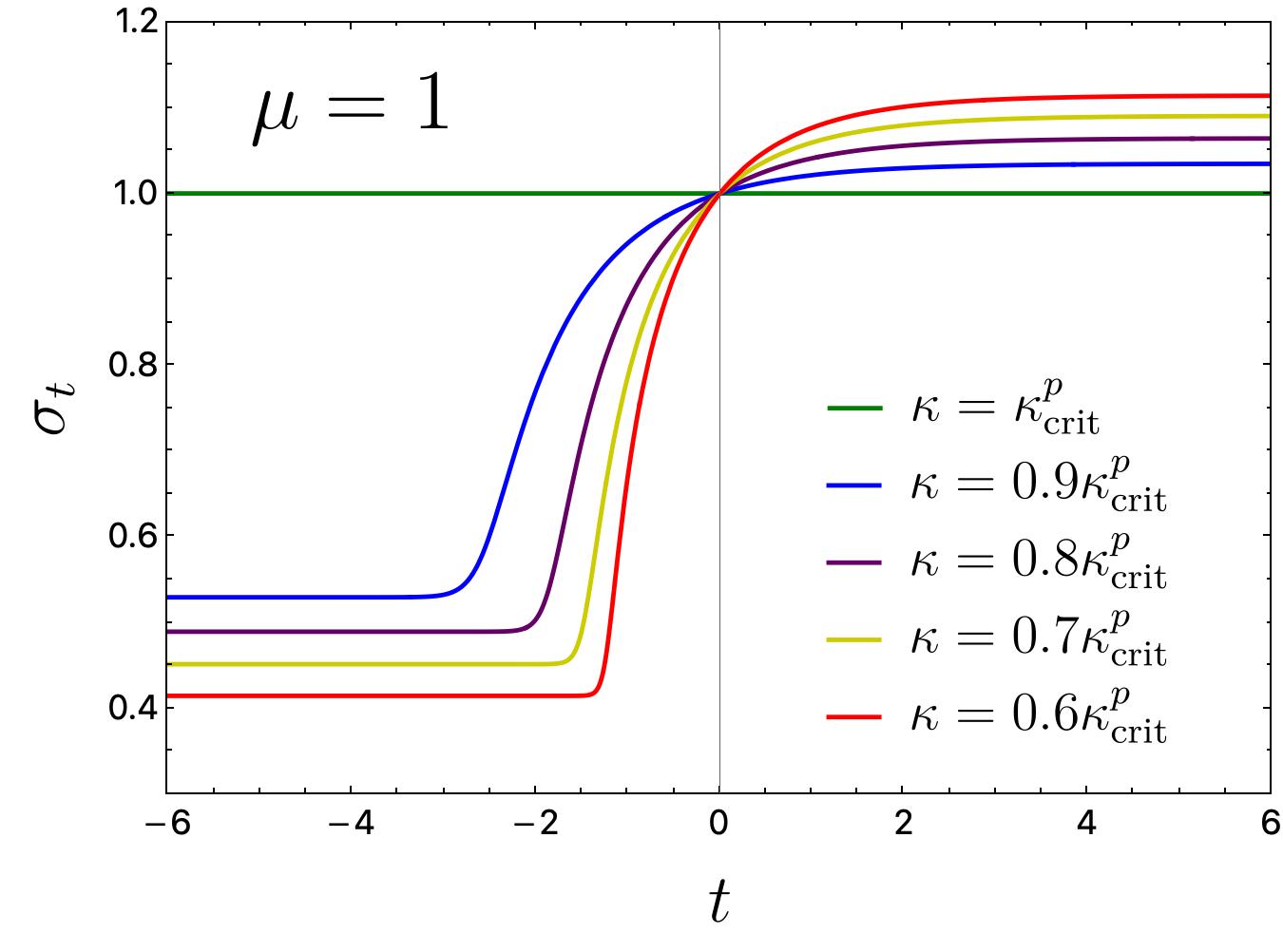}
    \includegraphics[scale=0.33]{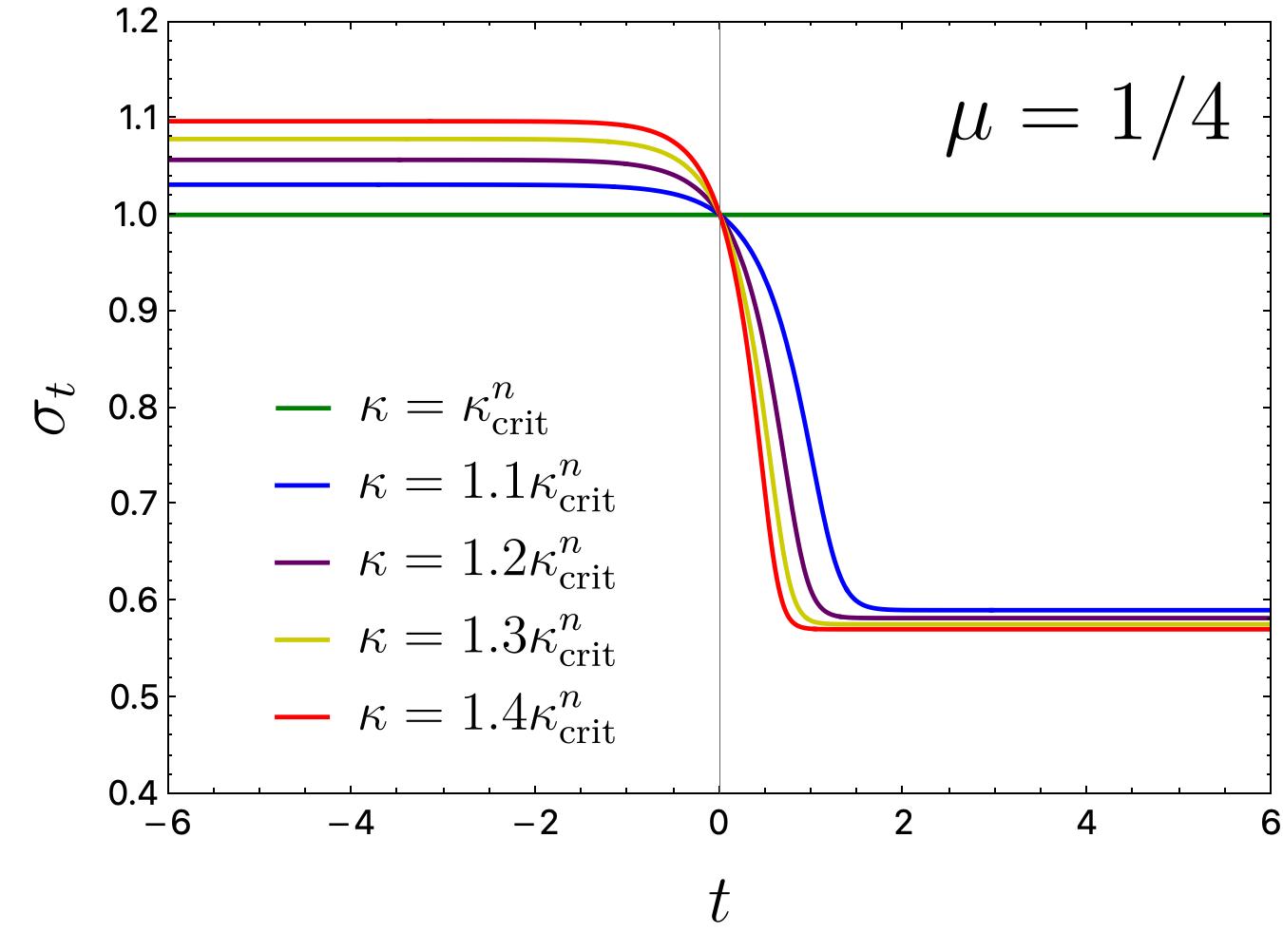}
    \caption{Left: numerical solutions of the positive homothety flow defined by Equation \eqref{eq:sigmatpositive}, for the specific values of $\kappa$ and $\mu$ indicated in the graph. Right: numerical solutions of the negative homothety flow defined by Equation \eqref{eq:sigmatnegative}, for the specific values of $\kappa$ and $\mu$ indicated in the graph. In both cases, the solutions correspond to eternal and regular flows.}
    \label{fig:graphs}
\end{figure}

\subsubsection{Negative homothety flows}

In analogy with the positive case, it is not possible to find explicit solutions for generic $(\kappa,\mu)$, so we restrict ourselves to a qualitative and numerical study of the solutions of Equation \eqref{eq:sigmatnegative}. We define:
\begin{equation*}
    F_n(\kappa,\mu,y)=2\,\left (\frac{\kappa}{9} + \frac{1}{3} \right ) y - \frac{\kappa}{3} + \frac{\mu^2}{y} + \frac{\kappa\,\mu^2}{y^2} - \frac{\kappa \,\mu^4}{4 y^4}    \, .
\end{equation*}
We start by examining the existence of static solutions. Demanding  $F_n(\kappa,\mu,1)=0$, we find:
\begin{equation*}
    \kappa(\mu)=\kappa_{\mathrm{crit}}^n(\mu)=\frac{36 \mu^2+24}{9\mu^2(\mu^2-4)+4}\,.
\end{equation*}
We note that for $\mu_-<\mu <\mu_+$, with $\mu_{\pm}^2=2/3(3\pm 2 \sqrt{2})$, there are no static solutions, since $\kappa_{\mathrm{crit}}^n(\mu)$ would be negative. This suggests to split our study depending on the value of $\mu^2$:
\begin{itemize}
    \item If $\mu_-^2>\mu^2>0$, there are two different subcases to consider:
    \begin{itemize}
        \item If $\kappa<\kappa_{\mathrm{crit}}^n(\mu)$, we have that $F_n(\kappa,\mu,1)>0$, with $F_n(\kappa,\mu,y)$ growing monotonically as we increase $y$ and diverging as $y \rightarrow +\infty$. Also, we observe that there exists a value $y_0 \in (0,1)$ such that $F_n(\kappa,\mu,y_0)=0$. Therefore, solutions $\sigma_t$ in this case are eternal and tend to a finite and positive value as $t \rightarrow -\infty$ and diverge as $t \rightarrow +\infty$. The case $\kappa=\kappa_{\mathrm{crit}}^n(\mu)$ corresponds to the static solution $\sigma_t=1$.
        \item If $\kappa>\kappa_{\mathrm{crit}}^n(\mu)$, then $F_n(\kappa,\mu,1)<0$. Furthermore, it is verified that there always exist $y_1 \in (0,1)$ and $y_2>1$ such that $F_n(\kappa,\mu,y_1)=F_n(\kappa,\mu,y_2)=0$. Consequently, these solutions are eternal and regular flows satisfying that $+\infty> \lim_{t \rightarrow -\infty} \sigma_t>1$ and $1 > \lim_{t \rightarrow +\infty} \sigma_t>0$. We show some instances of these solutions in Figure \ref{fig:graphs} (right).
    \end{itemize}
    \item If $\mu_+^2\geq \mu^2 \geq \mu_-^2$, regardless of the value of $\kappa$ we observe the same qualitative behaviour: the subsequent solution $\sigma_t$ exists for all times, asymptoting to a finite value as $t \rightarrow -\infty$ and diverging as $t\rightarrow +\infty$. This is due to $F_n(\kappa,\mu,y) >0$ for every $y\geq 1$ and to the existence of $y_0 \in (0,1)$ such that $F_n(\kappa,\mu,y_0)=0$.
    \item Finally, for $\mu^2>\mu_+^2$, two subcases need to be addressed:
    \begin{itemize}
        \item For $\kappa<\kappa_{\mathrm{crit}}(\mu)$, we have that $F_n(\kappa,\mu,y)>0$ for $y \geq 1$, always existing $y_0 \in (0,1)$ such that $F_n(\kappa,\mu,y_0)=0$. Therefore, the subsequent solutions $\sigma_t$ are defined for all times, remaining finite as $t \rightarrow -\infty$ and diverging when $t \rightarrow +\infty$. The case $\kappa=\kappa_{\mathrm{crit}}(\mu)$ corresponds to the static solution $\sigma_t=1$.
        \item If $\kappa>\kappa_{\mathrm{crit}}(\mu)$, it is easily verified that $F_n(\kappa,\mu,1)<0$. Since $F_n(\kappa_\mu,y)$ decreases monotonically as $y$ takes smaller values and diverges when $y \rightarrow 0^+$, we conclude that these solutions collapse at finite time. 
    \end{itemize}
\end{itemize}
For $\kappa=0$, the flow collapses at finite time regardless the value of $\mu \neq 0$ (if $\mu=0$, then $\sigma_t=2t/3+1$). Finally, if $\mu=0$, it is observed that the solution collapses at finite time if $\kappa>6$, it corresponds to the static one for $\kappa=6$ and it yields a solution that asymptotes to a finite and positive number for $t \rightarrow -\infty$ and diverges as $t \rightarrow +\infty$ for $\kappa<6$.



\section{Heterotic supergravity with trivial gauge bundle}
\label{sec:HS}


The notion of \emph{Heterotic soliton} was introduced in \cite{Moroianu:2021kit} as a manifold equipped with a solution to the bosonic sector of Heterotic supergravity with trivial gauge bundle at first order in the string slope parameter, which we called the \emph{Heterotic soliton system}. In the following, we revisit the Heterotic soliton system and discuss some of its basic geometric properties.


\subsection{The Heterotic soliton system}


Heterotic solitons are expected to be solitons for the Heterotic-Ricci flow, an expectation that we will prove to be correct in the three-dimensional case considered in Section \ref{sec:solitonsHR}, and hence are expected to define a subclass of Heterotic-Ricci solitons for which there exists an interpretation within the context of supergravity.  

\begin{definition}
\label{def:motionsugratorsion}
Let $\kappa \geq 0$ be a non-negative real constant. The \emph{Heterotic soliton system} on a manifold $M$ is the following system of partial differential equations:
\begin{eqnarray}
\label{eq:motionsugratorsion}	
\mathrm{Ric}^{g,H} +  \nabla^{g,H}\varphi  + \kappa\, \mR^{g,H} \circ_g\mR^{g,H} = 0\, , \quad \delta^g\varphi + |\varphi|^2_g  - |H|^2_g  +  \kappa\, |\mR^{g,H}|^2_{g} = 0
\end{eqnarray}
	
\noindent
together with the \emph{Bianchi identity}:
\begin{equation}
\label{eq:BianchiIdentity}
\dd H + \kappa \langle \mR^{g,H}\wedge \mR^{g,H}\rangle_g = 0
\end{equation}
	
\noindent
for triples $(g,\varphi,H)$, where $g$ is a Riemannian metric on $M$, $\varphi\in \Omega^1(M)$ is a closed one-form and $H\in \Omega^3(M)$ is a three-form. A \emph{Heterotic soliton} is a triple $(g,\varphi,H)$ satisfying the Heterotic soliton system.
\end{definition}

\begin{remark}
Evidently, the Bianchi identity \eqref{eq:BianchiIdentity} is not an \emph{identity} but an equation that needs to be solved. The terminology comes from the physics community and is nowadays pervasive also in the mathematics community.
\end{remark}

\noindent
We will denote the configuration space and solution space of the Heterotic soliton system by $\Conf(M)$ and $\Sol_{\kappa}(M)$, respectively. Given $(g,\varphi,H)\in \Conf(M)$, the cohomology class $[\varphi]\in H^1(M,\mathbb{R})$ determined by $\varphi$ will be called the \emph{Lee class} of the configuration. This terminology is inherited from \cite{Garcia-Fernandez:2018emx} in the context of the \emph{twisted} Hull-Strominger system, for which $\varphi$ is necessarily the Lee form of a Hermitian structure on a complex manifold. By the explicit form of Equations \eqref{eq:motionsugratorsion} and \eqref{eq:BianchiIdentity}, which are completely determined by supersymmetry, it is clear that the Heterotic soliton system can be considered to be a natural differential system for Riemannian geometries with torsion.  

\begin{remark}
The Heterotic soliton system corresponds to the equations of motion of bosonic Heterotic supergravity with trivial gauge bundle at first order in the string slope parameter $\kappa$ \cite{BRI,BRII}. The symmetric part of the first equation in \eqref{eq:motionsugratorsion} is usually referred to in the literature as the \emph{Einstein equation}, whereas its skew-symmetric part is usually called the \emph{Maxwell equation}. On the other hand, the second equation in \eqref{eq:motionsugratorsion} is commonly referred to in the literature as the \emph{dilaton equation}.  
\end{remark}

\noindent
A Heterotic soliton $(g,\varphi,H)\in \Sol_{\kappa}(M)$ for which both $\varphi$ and $H$ vanish identically reduces to a Ricci-flat metric. Hence, we will refer to such Heterotic solitons as \emph{Ricci-flat}. If $g$ is, in addition, flat, we will say that such a soliton is \emph{trivial}. A soliton $(g,\varphi,H)\in \Sol_{0}(M)$ with $\kappa=0$ is a particular instance of steady generalized Ricci soliton \cite{FernandezStreetsLibro} which is Ricci-flat if and only if $\varphi$ is an exact one-form. Hence, non-Ricci-flat Heterotic solitons with $\kappa=0$ are particular cases of non-gradient steady generalized Ricci solitons. In the following, we will always assume that $\kappa>0$. For further reference, we introduce the following maps based on the defining equations of the Heterotic soliton system:
\begin{align}
& \nonumber \cE_{\mathrm{E}}  \colon \Conf(M) \to \Gamma(T^{\ast}M\otimes T^{\ast}M)\, ,\quad (g,\varphi,H) \mapsto \mathrm{Ric}^{g,H} +  \nabla^{g, H}\varphi  + \kappa\, \mR^{g,H} \circ_g\mR^{g,H}\\
& \nonumber \cE_{\mathrm{D}}  \colon \Conf(M) \to \cC^{\infty}(M)\, ,\quad (g,\varphi,H) \mapsto \nabla^{g\ast}\varphi + |\varphi|^2_g  - |H|^2_g  +  \kappa\, |\mR^{g,H}|^2_{g} \\
&\cE_{\mathrm{B}}  \colon \Conf(M) \to \Omega^4(M)\, , \quad (g,\varphi,H) \mapsto \dd H + \kappa \langle \mR^{g,H}\wedge \mR^{g,H}\rangle_g \label{eq:eqsmap}
\end{align}

\noindent
which we interpret as smooth maps of Fréchet manifolds. Clearly, the solution space $\Sol_{\kappa}(M)$ of the system corresponds to the preimage of zero by $\cE\colon = (\cE_{\mathrm{E}},\cE_{\mathrm{D}},\cE_{\mathrm{B}})$. We incidentally note that very little is known about the structure and properties $\Sol_{\kappa}(M)$, our main conjecture being that it should be a finite-dimensional smooth manifold locally around an irreducible solution $(g,\varphi,H)\in \Sol_{\kappa}(M)$, namely a solution of the system with no automorphisms.

\begin{remark}
For every $(g,\varphi,H)\in \Conf(M)$ the symmetric and skew-symmetric projections $\cE_{\mathrm{E}}^s(g,\varphi,H)\in \Gamma(T^{\ast}M^{\odot 2})$ and $\cE_{\mathrm{E}}^a(g,\varphi,H)\in \Omega^2(M)$ of $\cE_{\mathrm{E}}(g,\varphi,H)$ are explicitly given by:
\begin{equation*}
\cE_{\mathrm{E}}^s(g,\varphi,H) = \mathrm{Ric}^{g} +  \nabla^{g}\varphi - \frac{1}{2} H \circ_g H  + \kappa\, \mR^{g,H} \circ_g\mR^{g,H}\, , \quad \cE_{\mathrm{E}}^a(g,\varphi,H)= \frac{1}{2}\delta^{g} H + \frac{1}{2} H(\varphi) \, .
\end{equation*}  

\noindent
These formulas can be very convenient to do explicit computations.
\end{remark}

\noindent
Due to the fact that we are studying an exact truncation of Heterotic supergravity at first order in the string slope parameter, the divergence of the Einstein equation $\cE_{\mathrm{E}}^s$ evaluated on an element $(g,\varphi,H)$ satisfying the remaining equations of the system may not vanish identically, as we proceed to show below through a calculation that will naturally lead us to introduce the notion of \emph{strong} Heterotic soliton, see Definition \ref{def:strongsystem}.

\begin{lemma}
\label{lemma:divergenceRHRH}
The following identity holds:
\begin{equation*}
\nabla^{g\ast} (\mR^{g,H}\circ_g \mR^{g,H})(v) = \langle \nabla^{g,H\ast}\mR^{g,H} ,\mR^{g,H}_v \rangle_g - \frac{1}{2} v (\vert \mR^{g,H}\vert^2_g) - \frac{1}{2} \langle H , v\lrcorner \langle \mR^{g,H}\wedge \mR^{g,H}\rangle_g\rangle_g
\end{equation*}
	
\noindent
for every $v\in \mathfrak{X}(M)$, where $\nabla^{g,H\ast}\colon \Gamma(\wedge^2 M\otimes \wedge^2 M) \to \Gamma(\wedge^1 M\otimes \wedge^2M)$ is the formal $L^2$ adjoint of $\dd^{\nabla^{g,H}}\colon \Gamma(\wedge^1 M\otimes \wedge^2 M) \to \Gamma(\wedge^2 M\otimes \wedge^2M)$ with respect to the determinant norm and $\nabla^{g\ast}\colon \Gamma(T^{\ast}M\otimes T^{\ast}M) \to \Omega^1(M)$ is the formal $L^2$ adjoint of $\nabla^g\colon \Omega^1(M) \to \Gamma(T^{\ast}M\otimes T^{\ast}M)$.
\end{lemma}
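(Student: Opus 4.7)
I will work pointwise in a local orthonormal frame $(e_i)$ parallel for $\nabla^g$ at the point of interest. As a preliminary, I will show that for any symmetric $(0,2)$-tensor $S$ one has $\nabla^{g\ast} S(v) = -\sum_i(\nabla^{g,H}_{e_i} S)(e_i, v)$: the difference between the $\nabla^g$- and $\nabla^{g,H}$-divergences involves a trace of the totally antisymmetric $H$ against a symmetric tensor, and therefore vanishes. Applying this with $S = \mR^{g,H}\circ_g\mR^{g,H}$ and using the metric-compatibility of $\nabla^{g,H}$ to differentiate the inner product $S(v_1,v_2) = \langle v_1\lrcorner\mR^{g,H}, v_2\lrcorner\mR^{g,H}\rangle_g$, the Leibniz rule produces a clean expansion in which no auxiliary frame-derivative terms appear. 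Summed over $i$, one of the two resulting pieces is identified immediately with $\langle \nabla^{g,H\ast}\mR^{g,H}, v\lrcorner\mR^{g,H}\rangle_g$ via the defining formula $(\nabla^{g,H\ast}\mR^{g,H})(v) = -\sum_i(\nabla^{g,H}_{e_i}\mR^{g,H})(e_i,v)$ of the $L^2$-adjoint. The remaining piece takes the form $-\tfrac{1}{2}v^a P(a)$ with $P(a) := \sum_{i,b,c,d}\mR^{g,H}_{ibcd}(\nabla^{g,H}_{e_i}\mR^{g,H})_{abcd}$.

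To handle the remaining piece I will apply the second Bianchi identity for $\nabla^{g,H}$, which for a connection with torsion $-H$ reads
\begin{equation*}
\sum_{\mathrm{cyc}(i,a,b)}(\nabla^{g,H}_i\mR^{g,H})_{abcd} = \sum_{\mathrm{cyc}(i,a,b)}\sum_m H_{iam}\mR^{g,H}_{mbcd}.
\end{equation*}
After solving for $(\nabla^{g,H}_i\mR^{g,H})_{abcd}$ and substituting into $P(a)$, the two cyclic partner terms produce, via the skew-symmetries of $\mR^{g,H}$ in both index pairs, the quantities $2\,e_a(|\mR^{g,H}|^2_g)$ and $-P(a)$ respectively; the latter yields a simple linear recursion fixing $P(a) = e_a(|\mR^{g,H}|^2_g) + \tfrac{1}{2}H_C(a)$, which in turn produces the $-\tfrac{1}{2}v(|\mR^{g,H}|^2_g)$ summand in the statement. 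Of the three curvature-torsion couplings produced by the Bianchi identity, two vanish automatically, each being a trace of the totally skew tensor $H$ against a symmetric bilinear form built from $\mR^{g,H}\circ_g\mR^{g,H}$ or its pair-swapped analogue; only the term $H_C(a) := \sum_{i,b,c,d,m}\mR^{g,H}_{ibcd}\,H_{bim}\,\mR^{g,H}_{macd}$ survives.

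The main obstacle will be the algebraic matching of $-\tfrac{1}{4}v^a H_C(a)$ with the target $-\tfrac{1}{2}\langle H, v\lrcorner \langle\mR^{g,H}\wedge\mR^{g,H}\rangle_g\rangle_g$. Expanding the target using the definition of $\langle\mR^{g,H}\wedge\mR^{g,H}\rangle_g$ together with the elementary pairing $\langle H, \alpha\wedge\beta\rangle_g = \tfrac{1}{2}\sum H_{pqr}\alpha_p\beta_{qr}$ for a $1$-form $\alpha$ and a $2$-form $\beta$ brings it to the explicit form $-\tfrac{1}{4}v^a\sum H_{bcd}\mR^{g,H}_{abij}\mR^{g,H}_{cdij}$. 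The required equality between this expression and $H_C(a)$ will be verified by a careful relabeling of dummy indices under the $3$-cycle $(b,c,d)\mapsto(c,d,b)$, exploiting the cyclic invariance $H_{bcd} = H_{cdb} = H_{dbc}$ of the totally skew tensor $H$ together with the skew-symmetries of $\mR^{g,H}$ in both its index pairs to interchange the roles of the two $\mR^{g,H}$-factors and match the two index contraction patterns. Collecting all contributions then yields the claimed identity.
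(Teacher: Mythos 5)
Your proposal is correct and follows essentially the same route as the paper's proof: both expand the divergence via the metric compatibility of $\nabla^{g,H}$ to isolate the $\langle \nabla^{g,H\ast}\mR^{g,H},\mR^{g,H}_v\rangle_g$ term, and both handle the remainder with the torsion-modified second Bianchi identity, which simultaneously yields the $-\tfrac12 v(\vert\mR^{g,H}\vert_g^2)$ term and the $H$-coupled term. The only differences are presentational — you solve the Bianchi identity for a single term and close a linear relation for $P(a)$ where the paper symmetrizes into a cyclic sum, and you carry out explicitly the index matching that the paper dismisses as ``tedious but straightforward.''
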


\begin{proof}
We choose an orthonormal frame $\left\{ e_i \right\}$ parallel with respect to $\nabla^g$ at a given point $m\in M$. For every $v\in\mathfrak{X}(M)$ parallel with respect to $\nabla^g$ at $m$ we compute (using from now on Einstein's summation convention on repeated indices):
\begin{eqnarray*}
\nabla^{g\ast} (\mR^{g,H}\circ_g \mR^{g,H})(v)  &=& -(\nabla^{g}_{e_i}(\mR^{g,H}\circ_g \mR^{g,H}))(e_i, v)=- \dd\langle \mR^{g,H}_{e_i, e_j} , \mR^{g,H}_{v,e_j}\rangle_g(e_i)\\& =& \langle \nabla^{g,H\ast}\mR^{g,H} , \mR^{g,H}_v \rangle_g - \langle \mR^{g,H}_{e_i,e_j} , (\nabla^{g,H}_{e_i}\mR^{g,H})_{v , e_j} \rangle_g\\ &=& \langle \nabla^{g,H\ast}\mR^{g,H} , \mR^{g,H}_v \rangle_g  - \frac{1}{2}\langle \mR^{g,H}_{e_i , e_j}, \cC(\nabla^{g,H}_{e_i}\mR^{g,H})_{v , e_j} + (\nabla^{g,H}_{v}\mR^{g,H})_{e_i , e_j}  \rangle_g \\&=& \langle \nabla^{g,H\ast}\mR^{g,H} , \mR^{g,H}_v \rangle_g - \frac{1}{2}\langle \mR^{g,H}_{e_i , e_j} , \cC(\nabla^{g,H}_{e_i}\mR^{g,H})_{v , e_j}\rangle_g - \frac{1}{2} v(\vert \mR^{g,H}\vert^2_g)
\end{eqnarray*}
	
\noindent
where $\cC$ denotes here the sum over all cyclic permutations of the set $(e_i,v,e_j)$. Using now the second Bianchi identity for connections with torsion we obtain after some tedious but straightforward computation:
\begin{equation*}
\langle \mR^{g,H}_{e_i , e_j} , \cC(\nabla^{g,H}_{e_i}\mR^{g,H})_{v , e_j}\rangle_g = \langle H , v\lrcorner \langle \mR^{g,H}\wedge \mR^{g,H}\rangle_g\rangle_g
\end{equation*}

\noindent
and we conclude.
\end{proof}

\noindent
We will also need the following:

\begin{lemma}
\label{lemma:divHH}
The following identity holds for every configuration $(g,\varphi,H)\in \Conf(M)$:
\begin{eqnarray*}
& \nabla^{g\ast}(H\circ_g H)(v) = - \frac{1}{2} v(\vert H\vert_g^2) - (H\circ_g H)(\varphi,v) - \kappa \langle H , v\lrcorner \langle \mR^{g,H}\wedge \mR^{g,H}\rangle_g\rangle_g \\
& + 2 \langle \cE^a_{\mathrm{E}}(g,\varphi,H), H(v)\rangle_g + \langle H , v\lrcorner\, \cE_{\mathrm{B}}(g,H)\rangle_g
\end{eqnarray*}

\noindent
where $v\in \mathfrak{X}(M)$ is a vector field.
\end{lemma}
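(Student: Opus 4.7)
The strategy is to compute $\nabla^{g\ast}(H\circ_g H)(v)$ pointwise in a $\nabla^g$-parallel orthonormal frame $\{e_i\}$ at a given point $m\in M$, with $v$ chosen parallel at $m$ (permissible since the identity is pointwise tensorial in $v$). Expanding $(H\circ_g H)(e_i,v)=\tfrac12 H(e_i,e_j,e_k)H(v,e_j,e_k)$ and applying the Leibniz rule produces the natural decomposition $\nabla^{g\ast}(H\circ_g H)(v)=A+B$, where
\[
A := -\tfrac12(\nabla^g_{e_i}H)(e_i,e_j,e_k)\,H(v,e_j,e_k),\qquad B := -\tfrac12 H(e_i,e_j,e_k)\,(\nabla^g_{e_i}H)(v,e_j,e_k).
\]

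The summand $A$ is immediate: by definition of the codifferential $A=\langle\delta^g H,v\lrcorner H\rangle_g$, and the formula for the skew-symmetric part of the Einstein map, $\cE_{\mathrm{E}}^a(g,\varphi,H)=\tfrac12\delta^g H+\tfrac12 H(\varphi)$, gives $\delta^g H=2\cE_{\mathrm{E}}^a-H(\varphi)$. Substituting, together with the identification $\langle H(\varphi),v\lrcorner H\rangle_g=(H\circ_g H)(\varphi,v)$, produces the $\cE_{\mathrm{E}}^a$ and $(H\circ_g H)(\varphi,v)$ contributions appearing on the right-hand side of the claim. The summand $B$ requires more care. Here I would invoke the standard torsion-free identity
\[
dH(v,e_i,e_j,e_k)=(\nabla^g_v H)(e_i,e_j,e_k)-(\nabla^g_{e_i}H)(v,e_j,e_k)+(\nabla^g_{e_j}H)(v,e_i,e_k)-(\nabla^g_{e_k}H)(v,e_i,e_j),
\]
solve it for $(\nabla^g_{e_i}H)(v,e_j,e_k)$, and insert it into the sum $X := H(e_i,e_j,e_k)(\nabla^g_{e_i}H)(v,e_j,e_k)=-2B$. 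Using the convention $|H|^2_g=\tfrac16 H(e_i,e_j,e_k)^2$, the anti-transposition and cyclic symmetries of the $3$-form $H$, and the $1/p!$ factor in the determinant inner product, the four resulting contractions become respectively $3v(|H|^2_g)$, $-X$ (relabelling $i\leftrightarrow j$), $-X$ (cyclic relabelling $i\to j\to k\to i$) and $-6\langle H,v\lrcorner dH\rangle_g$. Solving the resulting self-referential equation $X=3v(|H|^2_g)-2X-6\langle H,v\lrcorner dH\rangle_g$ gives $X=v(|H|^2_g)-2\langle H,v\lrcorner dH\rangle_g$, hence $B=-\tfrac12 v(|H|^2_g)+\langle H,v\lrcorner dH\rangle_g$.

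To conclude I would eliminate $dH$ via the Bianchi map, $\cE_{\mathrm{B}}(g,H)=dH+\kappa\langle\mR^{g,H}\wedge\mR^{g,H}\rangle_g$, and add $A+B$; collecting terms produces precisely the right-hand side of the claim. The only non-routine step is the combinatorial cancellation that leads to the self-referential equation for $X$, which demands careful bookkeeping of the $1/p!$ convention together with the cyclic-versus-transposition sign rules for the $3$-form $H$; no curvature input beyond the Bianchi identity of $(g,H)$ and the algebraic formula for $\cE_{\mathrm{E}}^a$ enters the computation, which is strictly simpler than the argument of Lemma \ref{lemma:divergenceRHRH} since no second Bianchi identity is needed.
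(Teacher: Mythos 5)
Your proposal is correct and follows essentially the same route as the paper: a $\nabla^g$-parallel frame, the Leibniz decomposition into the codifferential term (handled via $\delta^g H = 2\cE^a_{\mathrm{E}}-H(\varphi)$) and the remaining term (reduced to $-\tfrac12 v(|H|_g^2)+\langle H,v\lrcorner \dd H\rangle_g$ and then rewritten via $\cE_{\mathrm{B}}$). The only difference is cosmetic: you extract $\dd H$ by a component relabelling argument leading to the self-referential equation for $X$, whereas the paper uses the invariant contraction identity $e_i\wedge(v\lrcorner\nabla^g_{e_i}H)=\nabla^g_vH-v\lrcorner(e_i\wedge\nabla^g_{e_i}H)$; both are valid and your sign and normalization bookkeeping checks out.
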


\begin{proof}
We choose an orthonormal frame $\left\{ e_i \right\}$ adapted to $\nabla^g$ at a given point $m\in M$ and compute:
\begin{eqnarray*}
& -\nabla^{g\ast}(H\circ_g H)(v) = (\nabla^g_{e_i} (H\circ_g H))(e_i,v) = \dd\langle H(e_i) , H(v)\rangle_g(e_i) = -\langle \nabla^{g\ast}H , H(v)\rangle_g \\
&   + \langle H(e_i), v\lrcorner \nabla^g_{e_i}H\rangle_g
 = \langle H(\varphi) , H(v)\rangle_g - 2 \langle \cE^a_E(g,\varphi,H), H(v)\rangle_g - \langle H , v\lrcorner (e_i\wedge \nabla^g_{e_i}H) \rangle_g + \langle H , \nabla^g_v H \rangle_g \\
& = \frac{1}{2} v(\vert H\vert_g^2) - 2 \langle \cE^a_{\mathrm{E}}(g,\varphi,H), H(v)\rangle_g +\langle H(\varphi),H(v)\rangle_g + \kappa \langle H , v\lrcorner \langle \mR^{g,H}\wedge \mR^{g,H}\rangle_g\rangle_g - \langle H , v\lrcorner \cE_{\mathrm{B}}(g,H)\rangle_g
\end{eqnarray*}

\noindent
where $v\in\mathfrak{X}(M)$ is any vector field adapted to $\nabla^g$ at $m\in M$.
\end{proof}

\begin{proposition}
The following formula holds:
\begin{eqnarray*}
& v\lrcorner( \nabla^{g\ast}\cE^s_{\mathrm{E}}(g,\varphi,H) + \varphi\lrcorner \cE^s_{\mathrm{E}}(g,\varphi,H) + \frac{1}{2} \dd \mathrm{Tr}_g(\cE^s_{\mathrm{E}}(g,\varphi,H)) )= \kappa \langle  \mR^{g,H}_v , \nabla^{g,H\ast}\mR^{g,H} + \varphi\lrcorner\mR^{g,H}\rangle_g \\ 
& + \frac{1}{2} v\lrcorner \dd \cE_{\mathrm{D}}(g,\varphi,H) - \langle \cE^a_{\mathrm{E}}(g,\varphi,H), H(v)\rangle_g  - \frac{1}{2} \langle H , v\lrcorner\, \cE_{\mathrm{B}}(g,H)\rangle_g
\end{eqnarray*}

\noindent
for every $(g,\varphi,H)\in \Conf(M)$ and $v \in \mathfrak{X}(M)$.  
\end{proposition}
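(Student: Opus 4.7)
The plan is to decompose $\cE^s_{\mathrm{E}} = \mathrm{Ric}^g + \nabla^g\varphi - \tfrac{1}{2}H\circ_g H + \kappa\,\mR^{g,H}\circ_g\mR^{g,H}$ into its four summands, apply $\nabla^{g\ast}$ to each separately, and then combine with the two remaining pieces $\varphi\lrcorner\cE^s_{\mathrm{E}}$ and $\tfrac{1}{2}\dd\,\mathrm{Tr}_g(\cE^s_{\mathrm{E}})$, showing that the various gradient and $\varphi$-contracted terms recombine exactly into the four pieces on the right-hand side.

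For the divergences: the Ricci piece yields $\nabla^{g\ast}\mathrm{Ric}^g = -\tfrac{1}{2}\dd s^g$ by the twice-contracted second Bianchi identity. The $\nabla^g\varphi$ piece is symmetric because $\varphi$ is closed, and the standard Weitzenb\"ock formula for $1$-forms together with $\Delta\varphi = \dd\delta^g\varphi$ gives $\nabla^{g\ast}(\nabla^g\varphi) = \dd\delta^g\varphi - \mathrm{Ric}^g(\varphi^\sharp,\cdot)$. The divergences of the remaining two quadratic terms are already packaged as Lemmas~\ref{lemma:divergenceRHRH} and~\ref{lemma:divHH}: the first produces $\kappa\langle\nabla^{g,H\ast}\mR^{g,H},\mR^{g,H}_v\rangle_g$, a $-\tfrac{\kappa}{2}v(|\mR^{g,H}|^2_g)$ gradient, and a wedge term involving $\langle\mR^{g,H}\wedge\mR^{g,H}\rangle_g$; the second produces $\tfrac{1}{4}v(|H|^2_g)$, a $\tfrac{1}{2}(H\circ_g H)(\varphi,v)$ term, an opposite wedge contribution, and the anomaly terms $-\langle\cE^a_{\mathrm{E}},H(v)\rangle_g$ and $-\tfrac{1}{2}\langle H,v\lrcorner\cE_{\mathrm{B}}\rangle_g$.

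For the remaining two pieces on the LHS, expansion of $\varphi\lrcorner\cE^s_{\mathrm{E}}$ along the four summands yields $\mathrm{Ric}^g(\varphi,v)$, the key identity $(\nabla^g\varphi)(\varphi^\sharp,v) = \tfrac{1}{2}v(|\varphi|^2_g)$ (which uses the symmetry of $\nabla^g\varphi$ together with the Leibniz rule applied to $|\varphi|^2_g = \varphi(\varphi^\sharp)$), the cross term $-\tfrac{1}{2}(H\circ_g H)(\varphi,v)$, and $\kappa(\mR^{g,H}\circ_g\mR^{g,H})(\varphi,v)$, which by the very definition of $\circ_g$ equals $\kappa\langle\mR^{g,H}_v,\varphi\lrcorner\mR^{g,H}\rangle_g$---precisely the second curvature term on the RHS. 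For the trace piece, an elementary multi-index computation gives $\mathrm{Tr}_g(\cE^s_{\mathrm{E}}) = s^g - \delta^g\varphi - \tfrac{3}{2}|H|^2_g + 2\kappa|\mR^{g,H}|^2_g$, so $\tfrac{1}{2}v\lrcorner\dd\,\mathrm{Tr}_g(\cE^s_{\mathrm{E}})$ produces the corresponding gradients with these coefficients.

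Assembling everything: the $s^g$-gradients cancel between the Ricci divergence and the trace contribution; the $\mathrm{Ric}^g(\varphi,v)$ terms cancel between Weitzenb\"ock and $\varphi\lrcorner\cE^s_{\mathrm{E}}$; the wedge $\langle\mR^{g,H}\wedge\mR^{g,H}\rangle_g$ terms from Lemmas~\ref{lemma:divergenceRHRH} and~\ref{lemma:divHH} cancel against each other; and the $(H\circ_g H)(\varphi,v)$ cross terms cancel between Lemma~\ref{lemma:divHH} and $\varphi\lrcorner\cE^s_{\mathrm{E}}$. The surviving gradients add to $\tfrac{1}{2}v\lrcorner\dd(\delta^g\varphi + |\varphi|^2_g - |H|^2_g + \kappa|\mR^{g,H}|^2_g) = \tfrac{1}{2}v\lrcorner\dd\cE_{\mathrm{D}}$, while the curvature pairing and the anomaly terms $-\langle\cE^a_{\mathrm{E}},H(v)\rangle_g$ and $-\tfrac{1}{2}\langle H,v\lrcorner\cE_{\mathrm{B}}\rangle_g$ pass through unchanged, reproducing the claimed RHS. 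The only real obstacle is the bookkeeping: keeping the $\tfrac{1}{2}$-factors built into the determinant inner product and the multiplicities $\mathrm{Tr}_g(H\circ_g H) = 3|H|^2_g$ and $\mathrm{Tr}_g(\mR^{g,H}\circ_g\mR^{g,H}) = 2|\mR^{g,H}|^2_g$ consistent throughout the calculation, since each pairwise cancellation depends delicately on these constants.
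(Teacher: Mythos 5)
Your proposal is correct and follows essentially the same route as the paper's own proof: decompose $\cE^s_{\mathrm{E}}$ into its four summands, apply the standard identities $\nabla^{g\ast}\mathrm{Ric}^g=-\tfrac12\dd s_g$ and the Weitzenb\"ock formula for the closed one-form $\varphi$, invoke Lemmas \ref{lemma:divergenceRHRH} and \ref{lemma:divHH} for the quadratic terms, and track the cancellations against $\varphi\lrcorner\cE^s_{\mathrm{E}}$ and the trace term. All the coefficients and cancellations you list (including the mutual cancellation of the $\langle \mR^{g,H}\wedge\mR^{g,H}\rangle_g$ contributions and the traces $\mathrm{Tr}_g(H\circ_g H)=3|H|_g^2$, $\mathrm{Tr}_g(\mR^{g,H}\circ_g\mR^{g,H})=2|\mR^{g,H}|_g^2$) agree with the paper's computation.
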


\begin{proof}
Let $(g,\varphi,H)\in \Conf(M)$. First, recall the standard identities:
\begin{eqnarray*}
\nabla^{g\ast}\mathrm{Ric}^g = -\frac{1}{2} \dd s_g\, , \quad  \nabla^{g\ast}\nabla^g\varphi = \Delta_g\varphi - \mathrm{Ric}^g(\varphi)\, .
\end{eqnarray*}

\noindent
Using Lemmas \ref{lemma:divergenceRHRH} and \ref{lemma:divHH} together with the previous standard identities we obtain:
\begin{eqnarray*}
& v\lrcorner\nabla^{g\ast}\cE^s_{\mathrm{E}}(g,\varphi,H) = -\frac{1}{2} v (s_g) + \frac{1}{4}v(\vert H\vert^2) + \frac{1}{2} (H\circ_g H)(\varphi,v) + \Delta_g\varphi(v) - \mathrm{Ric}^g(\varphi,v) - \frac{\kappa}{2} v(\vert\mR^{g,H}\vert_g^2)\\
&   + \kappa \langle \mR^{g,H}_v , \nabla^{g,H\ast}\mR^{g,H} \rangle_g  -    \langle \cE^a_{\mathrm{E}}(g,\varphi,H), H(v)\rangle_g  - \frac{1}{2} \langle H , v\lrcorner\, \cE_{\mathrm{B}}(g,H)\rangle_g
\end{eqnarray*}

\noindent
for every vector field $v\in \mathfrak{X}(M)$. On the other hand, a quick computation reveals that:
\begin{eqnarray*}
v\lrcorner\varphi\lrcorner \cE^s_{\mathrm{E}}\,(g,\varphi,H) = \mathrm{Ric}^{g}(\varphi,v) +  \frac{1}{2} v(\vert\varphi\vert_g^2)  - \frac{1}{2} (H \circ_g H)(\varphi,v)  + \kappa\, (\mR^{g,H} \circ_g\mR^{g,H})(\varphi,v)
\end{eqnarray*}

\noindent
as well as:
\begin{equation*}
\dd \mathrm{Tr}_g(\cE^s_{\mathrm{E}}(g,\varphi,H)) = \dd ( s_g - \frac{3}{2} \vert H\vert_g^2 - \delta^g\varphi + 2 \kappa \vert \mR^{g,H}\vert_g^2)\, .
\end{equation*}

\noindent
Hence:
\begin{eqnarray*}
& v\lrcorner(\nabla^{g\ast}\cE^s_{\mathrm{E}}(g,\varphi,H) + \varphi\lrcorner\, \cE^s_{\mathrm{E}}(g,\varphi,H) + \frac{1}{2} \dd \mathrm{Tr}_g(\cE^s_{\mathrm{E}}(g,\varphi,H)) )=   \kappa \langle \mR^{g,H}_v , \nabla^{g,H\ast}\mR^{g,H} \rangle_g   \\
&  + \kappa\, (\mR^{g,H} \circ_g\mR^{g,H})(\varphi,v)  +   \frac{1}{2} \dd ( \delta^g\varphi + \vert\varphi\vert_g^2 -  \vert H\vert_g^2  + \kappa \vert \mR^{g,H}\vert_g^2)(v)  -    \langle \cE^a_{\mathrm{E}}(g,\varphi,H), H(v)\rangle_g   \\
&- \frac{1}{2} \langle H , v\lrcorner\, \cE_{\mathrm{B}}(g,H)\rangle_g =  \kappa \langle \mR^{g,H}_v , \nabla^{g,H\ast}\mR^{g,H} \rangle_g + \kappa \langle \mR^{g,H}_v , \mR^{g,H}_{\varphi}\rangle_g  + \frac{1}{2} v\lrcorner \dd \cE_{\mathrm{D}}(g,\varphi,H) \\
& -    \langle \cE^a_{\mathrm{E}}(g,\varphi,H), H(v)\rangle_g  - \frac{1}{2} \langle H , v\lrcorner\, \cE_{\mathrm{B}}(g,H)\rangle_g
\end{eqnarray*}

\noindent
which gives the equation in the statement.
\end{proof}

\noindent
Clearly, by the previous proposition, every Heterotic soliton satisfies:
\begin{equation*}
\langle  \mR^{g,H}_v , \nabla^{g,H\ast}\mR^{g,H} + \varphi\lrcorner\mR^{g,H}\rangle_g = 0
\end{equation*}

\noindent
for every vector field $v\in\mathfrak{X}(M)$. This motivates the following definition.

\begin{definition}
\label{def:strongsystem}
The \emph{strong Heterotic soliton system} is the Heterotic soliton system for triples $(g,\varphi,H)\in \Conf(M)$ together with the \emph{strong condition}:
\begin{equation}
\label{eq:strongcondition}
\nabla^{g,H\ast}\mR^{g,H} + \varphi\lrcorner\mR^{g,H} = 0\, .
\end{equation}

\noindent
A \emph{strong Heterotic soliton} is a solution of the strong Heterotic soliton system.
\end{definition}

\noindent
The strong Heterotic soliton system is an overdetermined system of partial differential equations. Despite being overdetermined, as we will see below it still has natural non-trivial solutions. This is a remarkable fact that can be traced back to the internal consistency  and coherence of supersymmetric field theories and, in particular, Heterotic supergravity and its various truncations. Discussing the consistency of the perturbative expansion Heterotic supergravity and its consistent truncations is beyond the scope of this article. The interested reader is referred to \cite{Melnikov:2014ywa} and its references for more details. 

\begin{remark}
\label{remark:existenceaction}
Note that every $(g,\varphi,H)\in \Conf(M)$ satisfying $\cE^{a}_{\mathrm{E}}(g,\varphi,H)=0$, $\cE_{\mathrm{D}}(g,\varphi,H)=0$ and $\cE_{\mathrm{B}}(g,\varphi,H)=0$ together with the strong condition automatically satisfies:
\begin{equation*}
\nabla^{g\ast}\cE_{\mathrm{E}}(g,\varphi,H) + \varphi\lrcorner \,\cE_{\mathrm{E}}(g,\varphi,H) + \frac{1}{2} \dd \mathrm{Tr}_g(\cE_{\mathrm{E}}(g,\varphi,H)) = 0\, .
\end{equation*}

\noindent
Identities of this type, involving the divergence of the Einstein equation, are closely related to the existence of variational principles associated with the Heterotic soliton system. Studying the action functionals that can be associated with the Heterotic soliton system is a fundamental problem that is currently a work in progress and that we will consider elsewhere.
\end{remark}

\noindent
The strong condition is an equation in $\wedge^1 M\otimes \wedge^2 M$. Hence, by projecting to $\wedge^3 M$ we obtain an equation in $\wedge^3 M$ that is satisfied by any strong Heterotic soliton and does not explicitly involve any curvature operators. On the other hand, by contraction we obtain an equation on $\wedge^1 M$.

\begin{proposition}
Let $(g,\varphi,H)\in \Conf(M)$ be a solution to the strong condition. Then, the following equations hold:
\begin{eqnarray}
&\nonumber \nabla^{g\ast}\nabla^g H + \frac{1}{2}\delta^g \dd H - \frac{1}{2} (\delta^g H)_{e_i}\wedge H_{e_i} + \frac{1}{2} H_{e_j}(e_i)\wedge (\nabla^g_{e_j}H)_{e_i} + \frac{1}{2} H_{e_j}(\nabla^g_{e_j} H) + \frac{1}{4} e_j \lrcorner\, H_{e_j}(\dd H) \\
&  - \frac{1}{4}  H_{e_j}(H_{e_j}(e_i)\wedge H_{e_i}) + \nabla^g_{\varphi}H + \frac{1}{2}\varphi\lrcorner\dd H - \frac{1}{2} H_{\varphi}(e_i)\wedge H_{e_i} = 0 \label{eq:Lambda3strong}
\end{eqnarray}

\noindent
where $(e_i)$ is any local orthonormal frame.
\end{proposition}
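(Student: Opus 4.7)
The plan is to view the strong condition \eqref{eq:strongcondition} as a tensor equation in $\Gamma(\wedge^1 M \otimes \wedge^2 M)$ and project it onto the natural subbundle $\wedge^3 M \hookrightarrow \wedge^1 M \otimes \wedge^2 M$ via the antisymmetrization map $\alpha \otimes \omega \mapsto \alpha \wedge \omega$. The resulting equation must then be rewritten purely in terms of $H$, $\nabla^g H$, its Hessian, and $\varphi$ by using the identity \eqref{rgh} to eliminate every occurrence of the curvature tensor $\mR^{g,H}$.

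First I would reduce $\nabla^{g,H\ast}\mR^{g,H}$ to $\nabla^{g\ast}\mR^{g,H}$ by means of the difference formula $\nabla^{g,H}_u = \nabla^g_u - \frac{1}{2} H_u$, extended as a derivation to all tensor bundles; the correction is an algebraic $H$-contraction against $\mR^{g,H}$ which, combined with the bracket piece of \eqref{rgh} below, eventually produces the cubic-in-$H$ term $-\frac{1}{4} H_{e_j}(H_{e_j}(e_i)\wedge H_{e_i})$. Next, I would substitute \eqref{rgh} for $\mR^{g,H}$ in both the divergence term and the $\varphi \lrcorner \mR^{g,H}$ term and compute the wedge projection. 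The pure Riemann contribution $\mR^g$ drops out entirely by the first algebraic Bianchi identity, which gives $\mR^g_{i[j,\,kl]} = 0$ after antisymmetrization over the last three indices. The Hessian piece $\pm \frac{1}{2}(\nabla^g_u H)(v)$ of \eqref{rgh}, once divergenced and antisymmetrized, contributes the rough Laplacian $\nabla^{g\ast}\nabla^g H$ together with the co-differential piece $\frac{1}{2}\delta^g \dd H$, the second term appearing via the commutator $[\nabla^g_u, \nabla^g_v] = \mR^g_{u,v}$ acting on $H$, whose own cyclic antisymmetrization again vanishes by Bianchi.

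Finally, the bracket piece $\frac{1}{4}[H_u, H_v]$ of \eqref{rgh}, after $\nabla^{g\ast}$-differentiation and wedge projection, generates exactly $-\frac{1}{2}(\delta^g H)_{e_i}\wedge H_{e_i}$, $\frac{1}{2} H_{e_j}(e_i)\wedge (\nabla^g_{e_j}H)_{e_i}$, $\frac{1}{2} H_{e_j}(\nabla^g_{e_j} H)$ and $\frac{1}{4} e_j \lrcorner H_{e_j}(\dd H)$; simultaneously, substituting \eqref{rgh} into $\varphi \lrcorner \mR^{g,H}$ yields $\nabla^g_\varphi H + \frac{1}{2}\varphi \lrcorner \dd H$ from its Hessian part and $-\frac{1}{2} H_\varphi(e_i)\wedge H_{e_i}$ from its bracket part, the Riemann part again dying by Bianchi. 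The main obstacle will be purely combinatorial bookkeeping: systematically tracking which indices are being antisymmetrized, distinguishing which linear combinations of $(\nabla^g_u H)(v,\cdot,\cdot)$ assemble into $\dd H$ and which into $\delta^g H$, and confirming that every residual curvature contribution is killed by Bianchi. A clean implementation fixes a $\nabla^g$-parallel orthonormal frame $(e_i)$ at a chosen point $m \in M$ and systematically uses $(\dd H)(e_i,e_j,e_k,e_l) = \sum_{\mathrm{cyc}} (\nabla^g_{e_i}H)(e_j,e_k,e_l)$ and $(\delta^g H)(e_j,e_k) = -\sum_i (\nabla^g_{e_i}H)(e_i,e_j,e_k)$ to regroup the derivative terms into the final form \eqref{eq:Lambda3strong}.
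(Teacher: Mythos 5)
Your plan follows essentially the same route as the paper's proof: skew-symmetrize the strong condition into $\wedge^3 M$, substitute \eqref{rgh} so that the pure Riemann contribution is killed by the first Bianchi identity, and then differentiate in a $\nabla^g$-parallel frame, splitting $\nabla^{g,H}_{e_j}=\nabla^g_{e_j}-\tfrac12 H_{e_j}$ to collect the cubic-in-$H$ terms. One minor remark: in the paper the $\tfrac12\delta^g\dd H$ term arises directly from differentiating the $-\tfrac12 v\lrcorner\,\dd H$ piece of the skew-symmetrization of $\mR^{g,H}_v$ (no commutator of covariant derivatives is needed), so your bookkeeping at that step is slightly heavier than necessary but leads to the same result.
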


\begin{proof}
Let $v$ be any tangent vector. By \eqref{rgh} and the first Bianchi identity for $\mR^g$, the totally skew-symmetric part of $\mR^{g,H}_{v}$ is explicitly given by:
\begin{eqnarray*}
& e_i\wedge\mR^{g,H}_{v \, e_i} = - \frac{1}{2} e_i \wedge(\nabla^g_{v} H)(e_i) + \frac{1}{2} e_i\wedge (\nabla^g_{e_i} H)(v) + \frac{1}{4} e_i\wedge [H_{v} , H_{e_i}] \\
& = -\nabla^g_{v} H - \frac{1}{2} v\lrcorner\, \dd H + \frac{1}{2} H_{v}(e_i)\wedge H_{e_i}\, .
\end{eqnarray*}

\noindent
We choose the local orthonormal frame $(e_i)$ to be $\nabla^g$-parallel at some point $m\in M$. Taking $v=e_j$ in the above formula, differentiating with respect to $e_j$ and summing over $j$, the skew-symmetric part of $\nabla^{g,H\ast}\mR^{g,H}$ at $m$ can be computed as follows::
\begin{eqnarray*}
 e_i\wedge  (\nabla^{g,H}_{e_j} \mR^{g,H})_{e_j e_i} & =&  (\nabla^g_{e_j} - \frac{1}{2} H_{e_j}) (-\nabla^g_{e_j} H - \frac{1}{2} e_j \lrcorner\, \dd H + \frac{1}{2} H_{e_j}(e_i)\wedge H_{e_i})\\
& =&\nabla^{g\ast}\nabla^g H + \frac{1}{2}\delta^g \dd H - \frac{1}{2} (\delta^g H)_{e_i}\wedge H_{e_i} + \frac{1}{2} H_{e_j}(e_i)\wedge (\nabla^g_{e_j}H)_{e_i}\\
& &+\frac{1}{2} H_{e_j}(\nabla^g_{e_j} H) + \frac{1}{4} e_j \lrcorner\, H_{e_j}(\dd H) - \frac{1}{4}  H_{e_j}(H_{e_j}(e_i)\wedge H_{e_i})
\end{eqnarray*}

\noindent
where we have used that: 
\begin{equation*}
  H_{e_j}(e_j \lrcorner\,\dd H) = e_j \lrcorner\, H_{e_j}(\dd H)\, .
\end{equation*}

\noindent
Combining the previous equations together with the skew-symmetrization of \eqref{eq:strongcondition}, yields \eqref{eq:Lambda3strong}.
\end{proof}

\begin{remark}
Equation \eqref{eq:Lambda3strong} will be fundamental for the classification of the three-dimensional strong Heterotic solitons, considered in subsection \ref{sec:3dstrongsolitons}, where it becomes a scalar equation, see Lemma \ref{lemma:strongfconstant}.
\end{remark}


\subsection{Lift to the frame bundle}


We have introduced the Heterotic soliton system as a system of differential equations for a metric connection with skew-symmetric torsion that is determined by Heterotic supergravity and implicitly by the expected structure of the self-similar solutions to the Heterotic-Ricci flow. In the following, we show that the Heterotic soliton system, and especially its strong condition, can be naturally interpreted when \emph{lifted} to the reference frame bundle of the underlying manifold $M$. For this, we will adapt the construction of \cite[Proposition 7.1]{Baraglia:2013wua} to our case in which the \emph{gauge bundle} is not an abstract bundle and the \emph{gauge connection} is not an arbitrary connection but metric-compatible for each choice of Riemannian metric. 
 
Given $(g,\varphi,H)\in \Conf(M)$, we denote by $\mathrm{Fr}_g(M)$ the bundle of oriented frames of $(M,g)$. Given any metric connection $\nabla$ on $(M,g)$, in particular $\nabla^{g,H}$, we denote by:
\begin{equation*}
\mathrm{CS}(\nabla) \in \Omega^3(\mathrm{Fr}_g(M))
\end{equation*}
\noindent
its Chern-Simons form, which defines a three-form on the total space of $\mathrm{Fr}_g(M)$.  There is a natural map:
\begin{equation*}
\Conf(M) \to \Met(\mathrm{Fr}_g(M))\, , \qquad (g,\varphi,H)\mapsto \bar{g}_{\kappa}
\end{equation*}

\noindent
from $\Conf(M)$ into the Riemannian metrics over $\mathrm{Fr}_g(M)$ that, to every $(g,\varphi,H)$, associates the metric:
\begin{equation*}
\bar{g}_{\kappa}(X_1,X_2) = g_m(\dd\pi(X_1),\dd\pi(X_2)) - \kappa\, \mathrm{Tr}(\cA_{g,H}(X_1), \cA_{g,H}(X_2))\, , \qquad X_1, X_2 \in T_p\mathrm{Fr}_g(M)
\end{equation*}
\noindent
where $\cA_{g,H}$ is the metric connection $\nabla^{g,H}$, understood as a one-form in the total space of $\mathrm{Fr}_g(M)$ with values in $\mathfrak{so}(n)$. In particular, for every $X\in T_p \mathrm{Fr}_g(M)$ we have $\cA_{g,H}(X)\in \mathfrak{so}(n)$. Note that the metric $\bar{g}_{\kappa}$ is invariant under the principal bundle action of $\mathrm{Fr}_g(M)$.

\begin{proposition}\cite[Proposition 7.1]{Baraglia:2013wua}
A triple $(g,\varphi,H)\in \Conf(M)$ is a strong Heterotic soliton with constant $\kappa$ if and only if the associated triple $(\bar{g}_{\kappa},\bar{\varphi},\bar{H})$ satisfies:
\begin{eqnarray}
\label{eq:liftedequations}
\mathrm{Ric}^{\bar{g}_{\kappa},\bar{H}_{\kappa}} +  \nabla^{\bar{g}_{\kappa},\bar{H}_{\kappa}}\bar{\varphi} = 0\, , \qquad \delta^{\bar{g}_{\kappa}}\bar{\varphi} + |\bar{\varphi}|^2_{\bar{g}_{\kappa}}  - |\bar{H}_{\kappa}|^2_{\bar{g}_{\kappa}} = 0\, ,\qquad  \dd \bar{H}_{\kappa}=0
\end{eqnarray}

\noindent
where $\bar{\varphi} = \pi^{\ast}\varphi$ and  $\bar{H}_{\kappa} = H + \kappa\, \mathrm{CS}(\mathcal{A}_{g,H})$.
\end{proposition}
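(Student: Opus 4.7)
The plan is to adapt directly the Kaluza--Klein-type calculation of \cite[Proposition 7.1]{Baraglia:2013wua} to the present setting, in which the gauge bundle is the frame bundle $\mathrm{Fr}_g(M)$ and the gauge connection is the metric connection $\cA_{g,H}$ with skew torsion $-H$. The metric $\bar g_\kappa$ is a bundle-like Riemannian submersion metric for $\pi\colon \mathrm{Fr}_g(M)\to M$, with the vertical directions scaled by $\kappa$ via (minus) the trace form on $\mathfrak{so}(n)$, and with horizontal distribution given by $\ker\cA_{g,H}$. The three-form $\bar H_\kappa$ is, by construction, the Chern--Simons correction of $\pi^\ast H$ determined by $\nabla^{g,H}$.

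First I would verify that $\dd \bar H_\kappa=0$ is equivalent to the Bianchi identity \eqref{eq:BianchiIdentity}. This is the standard transgression identity
\begin{equation*}
\dd\,\mathrm{CS}(\cA_{g,H})=\langle \mR^{g,H}\wedge \mR^{g,H}\rangle_g
\end{equation*}
(pulled back to $\mathrm{Fr}_g(M)$ and with the normalization of $\langle\cdot,\cdot\rangle_g$ built in through the Killing form), so the closedness of $\bar H_\kappa$ is literally \eqref{eq:BianchiIdentity} on $\mathrm{Fr}_g(M)$, and since $\pi$ is a submersion it is equivalent to the original equation on $M$.

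Next I would compute the curvature $\mR^{\bar g_\kappa,\bar H_\kappa}$ of the unique $\bar g_\kappa$-metric connection with skew torsion $-\bar H_\kappa$, using O'Neill-type formulas adapted to the presence of torsion. The essential inputs are that the horizontal lift $u^h$ of $u\in T_{\pi(p)}M$ lies in $\ker \cA_{g,H}$ and that the $\cA_{g,H}$-curvature of a pair of horizontal lifts $u^h,v^h$ is vertical, equal to $\mR^{g,H}_{u,v}\in \mathfrak{so}(n)$. Then $\mathrm{Ric}^{\bar g_\kappa,\bar H_\kappa}$ splits naturally into three blocks. On horizontal--horizontal pairs it equals $\mathrm{Ric}^{g,H}(u,v)+\kappa\,(\mR^{g,H}\circ_g \mR^{g,H})(u,v)$, the second term arising from the Yang--Mills energy of $\cA_{g,H}$, weighted by the $\kappa$-rescaling of the fiber metric, so that together with the horizontal--horizontal part of $\nabla^{\bar g_\kappa,\bar H_\kappa}\bar\varphi=\pi^\ast(\nabla^{g,H}\varphi)$ this block reproduces the Einstein equation of the Heterotic soliton system. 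On horizontal--vertical pairs it is proportional to $\nabla^{g,H\ast}\mR^{g,H}+\varphi\lrcorner \mR^{g,H}$: the divergence contribution is the usual one for the Ricci tensor of a principal connection, while the $\varphi$-contraction enters from the horizontal--vertical component of $\nabla^{\bar g_\kappa,\bar H_\kappa}\bar\varphi$, which is nonzero because $\bar\varphi$ is basic and so its vertical derivative involves exactly the curvature $\mR^{g,H}$ paired with $\varphi$. This block is thus equivalent to the strong condition \eqref{eq:strongcondition}. On vertical--vertical pairs the equation reduces to an identity by $\mathrm{Ad}$-invariance of the trace form on $\mathfrak{so}(n)$ and the absence of vertical components of $\bar\varphi$ and of the basic part of $\bar H_\kappa$.

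Finally I would translate the dilaton equation. Because $\bar\varphi$ is the pullback of a basic form and the fibers are $\mathrm{Ad}$-invariant of constant size, one obtains $|\bar\varphi|^2_{\bar g_\kappa}=|\varphi|^2_g$ and $\delta^{\bar g_\kappa}\bar\varphi=\delta^g\varphi$. A direct decomposition of $\bar H_\kappa=\pi^\ast H+\kappa\,\mathrm{CS}(\cA_{g,H})$ along horizontal and vertical directions, combined with the $\kappa$-rescaling of the vertical part of $\bar g_\kappa$, gives $|\bar H_\kappa|^2_{\bar g_\kappa}=|H|^2_g+\kappa\,|\mR^{g,H}|^2_g$ (up to exact terms that are absorbed into $\delta^g\varphi$). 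Substituting these identities shows that the dilaton equation in \eqref{eq:liftedequations} is equivalent to $\cE_{\mathrm D}(g,\varphi,H)=0$, completing the equivalence.

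The hard part will be the precise bookkeeping of horizontal--vertical cross-terms in the torsion-modified O'Neill formula for $\mR^{\bar g_\kappa,\bar H_\kappa}$. One must simultaneously track the $\kappa$-scaling of the vertical metric, the contribution of $\mathrm{CS}(\cA_{g,H})$ to $\bar H_\kappa$ which modifies the torsion of $\nabla^{\bar g_\kappa,\bar H_\kappa}$ transversally, and the fact that $\bar\varphi$ is basic so that its covariant derivative mixes horizontal and vertical directions through $\mR^{g,H}$. Getting all these coefficients and signs to match exactly so that the Einstein equation plus strong condition on $M$ reassemble into a single $\mathrm{Ric}^{\bar g_\kappa,\bar H_\kappa}+\nabla^{\bar g_\kappa,\bar H_\kappa}\bar\varphi=0$ on $\mathrm{Fr}_g(M)$ is the only non-formal step; once this Kaluza--Klein dictionary is set up, the proposition follows by componentwise identification.
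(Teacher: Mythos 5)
The paper itself offers no proof of this proposition: it is quoted from Baraglia--Hekmati and the text only indicates that their Proposition 7.1 is being adapted to the case where the gauge bundle is $\mathrm{Fr}_g(M)$ and the gauge connection is $\nabla^{g,H}$. Your overall strategy --- the transgression identity $\dd\,\mathrm{CS}(\cA_{g,H})=\langle\mR^{g,H}\wedge\mR^{g,H}\rangle_g$ for the Bianchi identity, and a Kaluza--Klein/O'Neill block decomposition of $\mathrm{Ric}^{\bar g_\kappa,\bar H_\kappa}+\nabla^{\bar g_\kappa,\bar H_\kappa}\bar\varphi$ in which the horizontal--horizontal block reproduces the Einstein equation, the mixed block reproduces the strong condition $\nabla^{g,H\ast}\mR^{g,H}+\varphi\lrcorner\mR^{g,H}=0$, and the vertical--vertical block is an identity --- is exactly the right one and is what the cited reference carries out.

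There is, however, a concrete sign problem that would prevent your computation from closing. You treat $\bar g_\kappa$ as a \emph{Riemannian} submersion metric with fibers made positive definite by the negative of the trace form, and accordingly you assert $|\bar H_\kappa|^2_{\bar g_\kappa}=|H|^2_g+\kappa|\mR^{g,H}|^2_g$ and attribute the $+\kappa\,\mR^{g,H}\circ_g\mR^{g,H}$ correction in the horizontal Ricci to a positive Yang--Mills energy. Neither is compatible with the equivalence you are trying to prove: matching the dilaton equation $\delta^g\varphi+|\varphi|^2_g-|H|^2_g+\kappa|\mR^{g,H}|^2_g=0$ against $\delta^{\bar g_\kappa}\bar\varphi+|\bar\varphi|^2_{\bar g_\kappa}-|\bar H_\kappa|^2_{\bar g_\kappa}=0$ forces $|\bar H_\kappa|^2_{\bar g_\kappa}=|H|^2_g-\kappa|\mR^{g,H}|^2_g$, and in a genuine Riemannian submersion the O'Neill correction to the horizontal Ricci is \emph{negative} (of the form $-2|A_X|^2$), which would produce $-\kappa\,\mR^{g,H}\circ_g\mR^{g,H}$ rather than $+\kappa\,\mR^{g,H}\circ_g\mR^{g,H}$. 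Both signs come out correctly only if the fiber part of $\bar g_\kappa$ is \emph{negative} definite, i.e.\ if $\bar g_\kappa$ has split signature --- which is precisely the point the paper emphasizes in the remark following the proposition, where the indefiniteness of $\bar g_\kappa$ is traced back to the sign of the $\kappa\,\mR^{g,H}\circ_g\mR^{g,H}$ term. Your sketch needs to be redone with the pseudo-Riemannian fiber metric throughout (every trace over vertical directions, every inverse fiber metric in $|\mathrm{CS}(\cA_{g,H})|^2_{\bar g_\kappa}$, and the vertical contributions to the mixed Ricci block acquire a sign); the phrase ``up to exact terms absorbed into $\delta^g\varphi$'' also hides a pointwise computation --- the cross term $\langle\pi^\ast H,\mathrm{CS}(\cA_{g,H})\rangle_{\bar g_\kappa}$ vanishes for type reasons since $\mathrm{CS}(\cA_{g,H})$ has no purely horizontal component, not because of an integration by parts.
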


\begin{remark}
The interest of the previous proposition is that equations \eqref{eq:liftedequations} precisely correspond to the equations of motion of NS-NS supergravity for a pseudo-Riemannian metric of split signature and a three-form \emph{flux} $\bar{H}_k$ belonging to a given \emph{string class} \cite{Redden} for each choice of metric.  The fact that the signature of the metric $\bar{g}$ on the total space of $\mathrm{Fr}_g(M)$ is of split signature is not accidental and can be traced back to the sign of the higher order curvature term $\mR^{g,H}\circ_g \mR^{g,H}$ in \eqref{eq:motionsugratorsion}, see also Remark \ref{remark:positivenegative}. Had this term appeared with the opposite sign (recall that $\kappa$ is assumed to be non-negative), then the corresponding $\bar{g}$ would be positive definite. This would be however inconsistent with the physical interpretation of the system. 
\end{remark}


\section{Heterotic solitons on three-manifolds}
\label{sec:solitonsHR}


In this section, we consider the Heterotic soliton system on a compact three-manifold with the goal of obtaining a classification result for three-dimensional strong Heterotic solitons. 


\subsection{Three-dimensional reformulation}


The Heterotic soliton system can be simplified in three dimensions using the fact that the Hodge dual of $H$ is a function and the fact that the Riemann tensor is completely determined by the Ricci curvature. Given a triple $(g,\varphi,H)$, we write $H= f \nu_g$ in terms of the unique function $f = \ast_g H\in C^{\infty}(M)$, where $\nu_g$ denotes the Riemannian volume form associated to $g$. Consequently, we define the configuration space $\Conf(M)$ of the Heterotic soliton system on $M$ as the set of triples $(g,\varphi,f)$, where $g$ is a Riemannian metric, $\varphi$ is a one-form and $f$ is a function. Similarly, we define the set of Heterotic solitons $\Sol_{\kappa}(M)$ on $M$ as the set of triples $(g,\varphi,f)\in\Conf(M)$ such that $(g,\varphi,H = \ast_g f)$ satisfies the Heterotic soliton system with parameter $\kappa\in \mathbb{R}$. 

Using the Lemma \ref{lemma:vg} in Appendix \ref{app:curvature3d} we can write the Heterotic system in three dimensions as a system of differential equations for a metric $g$, a closed one-form $\varphi$ and a function $f$ involving the Ricci tensor and scalar curvature of $g$ as the only curvature operators.
 
\begin{proposition}
A triple $(g,\varphi, f)\in \Conf(M)$ is a Heterotic soliton if and only if it satisfies the following system of equations:
\begin{eqnarray}
\label{eq:motionHetsugra3d1}
& \nonumber -  \kappa \, \mathrm{Ric}^g \circ_g \mathrm{Ric}^g + (1 + \kappa s_g - \frac{\kappa}{2} f^2  )\mathrm{Ric}^{g} + \left ( \kappa  \vert \mathrm{Ric}^g \vert_g^2-\frac{\kappa}{2} s_g^2+\frac{\kappa}{4} \vert \dd f  \vert_g^2  - \frac{1}{2} f^2+\frac{\kappa}{8}f^4\right)  g \\ 
& + \frac{\kappa}{2}\, [\ast_g\dd f, \mathrm{Ric}^g] + \frac{\kappa }{4} \dd f \otimes \dd f + \nabla^{g}\varphi= 0\, ,\\
\label{eq:motionHetsugra3d2}
& f \varphi=\dd f\, , \qquad s_g= 3\, \delta^g \varphi+2 \vert \varphi\vert^2 -\frac{1}{2} f^2\, ,
\end{eqnarray}

\noindent
where $f = \ast_g H$.
\end{proposition}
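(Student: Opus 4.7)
The plan is to derive Equations \eqref{eq:motionHetsugra3d1}--\eqref{eq:motionHetsugra3d2} from the Heterotic soliton system of Definition \ref{def:motionsugratorsion} specialized to dimension three with $H = f\,\nu_g$, and then verify that the derivation is reversible. First, the Bianchi identity \eqref{eq:BianchiIdentity} is automatic on a three-manifold since both $\dd H$ and $\langle \mR^{g,H}\wedge\mR^{g,H}\rangle_g$ are four-forms and therefore vanish identically, so it imposes no constraint. Substituting $H=f\,\nu_g$ yields $H\circ_g H = f^2 g$ and $|H|_g^2 = f^2$ by direct computation. I would then split the first equation in \eqref{eq:motionsugratorsion} into its symmetric and skew-symmetric parts using the formulas for $\cE_{\mathrm{E}}^s$ and $\cE_{\mathrm{E}}^a$ recorded in Section \ref{sec:HS}, noting that $\mR^{g,H}\circ_g\mR^{g,H}$ is automatically symmetric and that $(\nabla^{g,H}\varphi)_{\mathrm{sym}}=\nabla^g\varphi$ as a consequence of $\varphi$ being closed. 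The skew-symmetric part $\tfrac{1}{2}\delta^g H + \tfrac{1}{2}H(\varphi)=0$ becomes, after Hodge dualization (using $\ast_g H = f$ so that $\delta^g H$ is proportional to $\ast_g \dd f$, and $H(\varphi)=f\,\ast_g\varphi$), precisely the equation $f\varphi=\dd f$ appearing in \eqref{eq:motionHetsugra3d2}.

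For the symmetric part of the Einstein equation, the key ingredient is Lemma \ref{lemma:vg} in Appendix \ref{app:curvature3d}, which in dimension three expresses $\mR^{g,H}\circ_g\mR^{g,H}$ purely in terms of $\mathrm{Ric}^g$, $s_g$, $\dd f$ and $f$ (reflecting the fact that in three dimensions the Riemann tensor is determined by the Ricci tensor and the three-form $H$ is encoded by the scalar $f$). Substituting this expression together with $H\circ_g H = f^2 g$ into $\mathrm{Ric}^g - \tfrac{1}{2}f^2 g + \nabla^g\varphi + \kappa\,\mR^{g,H}\circ_g\mR^{g,H} = 0$ and collecting terms produces \eqref{eq:motionHetsugra3d1}. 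The remaining scalar identity $s_g = 3\delta^g\varphi + 2|\varphi|^2 - \tfrac{1}{2}f^2$ is obtained by taking the metric trace of the symmetric Einstein equation (using $\mathrm{tr}_g(\mR^{g,H}\circ_g\mR^{g,H}) = 2|\mR^{g,H}|_g^2$, a direct consequence of the definitions in Section \ref{subsec:Preliminaries}) and then subtracting twice the dilaton equation, so that the $\kappa|\mR^{g,H}|^2_g$ contribution cancels.

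For the converse, each of the steps above is reversible: starting from \eqref{eq:motionHetsugra3d1}--\eqref{eq:motionHetsugra3d2}, one recovers the symmetric part of the Einstein equation by re-substituting the expansion of $\mR^{g,H}\circ_g\mR^{g,H}$ supplied by Lemma \ref{lemma:vg}, the skew-symmetric part by dualizing $f\varphi=\dd f$, and the dilaton equation by combining the trace of \eqref{eq:motionHetsugra3d1} with the scalar relation in \eqref{eq:motionHetsugra3d2}. The main obstacle is bookkeeping rather than any deep geometric difficulty: the sign conventions in the Hodge dualization of the skew-symmetric Einstein equation, the precise form of the trace identity $\mathrm{tr}_g(\mR^{g,H}\circ_g\mR^{g,H}) = 2|\mR^{g,H}|_g^2$, and the three-dimensional expansion of $\mR^{g,H}\circ_g\mR^{g,H}$ must all be handled with care, but the only substantive computational input is Lemma \ref{lemma:vg}, which we are free to assume.
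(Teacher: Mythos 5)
Your proposal is correct and follows essentially the same route as the paper: the Bianchi identity is dismissed as automatic in dimension three, the symmetric part of the Einstein equation is rewritten via Lemma \ref{lemma:vg} and $H\circ_g H = f^2 g$, the skew-symmetric part is Hodge-dualized to $f\varphi = \dd f$, and the scalar relation comes from combining the trace of the Einstein equation with the dilaton equation. You are in fact slightly more explicit than the paper on the last step (trace minus twice the dilaton equation, so that the $\kappa|\mR^{g,H}|_g^2$ terms cancel), which is exactly the ``appropriate combination'' the paper alludes to.
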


\begin{proof}
Since in three dimensions the Bianchi equation \eqref{eq:Bianchi} is automatically satisfied, a triple $(g,\varphi, f)\in \Conf(M)$ is a Heterotic soliton if and only if $(g,\varphi, H = f \nu_g)$ satisfies equations \eqref{eq:motionsugratorsion}. Plugging Equation \eqref{eq:lemmavg} into $\cE_{\mathrm{E}}^s(g,\varphi,H) = 0$ and using the identity:
\begin{equation*}
H\circ_g H(v_1,v_2) = g(v_1,v_2) f^2
\end{equation*}

\noindent
it follows that Equation \eqref{eq:motionHetsugra3d1} is equivalent to $\cE_{\mathrm{E}}^s(g,\varphi,H) = 0$, namely it is equivalent to the symmetric part of the first equation in \eqref{eq:motionsugratorsion}. For the skew-symmetric part $\cE_{\mathrm{E}}^a(g,\varphi,H) = 0$ of $\cE_{\mathrm{E}}(g,\varphi,H)=0$ we compute:
\begin{equation*}
\delta^g H +  \iota_{\varphi} H =-\ast_g \dd \ast_g (f \nu_g) +  f \iota_{\varphi} \nu_g = -\ast_g \dd f +f \ast_g\varphi
\end{equation*}

\noindent
whence equation $\cE_{\mathrm{E}}^a(g,\varphi,H)=0$ is equivalent to the first equation in \eqref{eq:motionHetsugra3d2}. The second equation in \eqref{eq:motionsugratorsion} can be proven to be equivalent to the second equation in \eqref{eq:motionHetsugra3d2} by combining it appropriately to the trace of $\cE_{\mathrm{E}}(g,\varphi,H)=0$. 
\end{proof} 
 
\noindent
Therefore, the Heterotic soliton system, as introduced in Definition \ref{def:HeteroticSystem}, is equivalent in three dimensions to  equations \eqref{eq:motionHetsugra3d1} and \eqref{eq:motionHetsugra3d2} for tuples of the form $(g,\varphi,f)\in \Conf(M)$. 

\begin{definition}
\label{def:HeteroticSystem}
Let $\kappa>0$ be a positive real number. Equations \eqref{eq:motionHetsugra3d1} and \eqref{eq:motionHetsugra3d2} are the {\em three-dimensional Heterotic soliton system} on $M$. Solutions of this system are \emph{three-dimensional Heterotic solitons}.
\end{definition}

\noindent
The following result is the key reason why solutions to the three-dimensional Heterotic system are relevant in relation to the three-dimensional Heterotic-Ricci flow: the former are, as their name suggests, self-similar solutions of the latter. 

\begin{proposition}
Let $(g,\varphi,f)\in \Sol_{\kappa}(M)$ be a three-dimensional Heterotic soliton. Then, there exists a one-parameter family of diffeomorphisms $\left\{ \psi_t\right\}_{t\in \cI}$ for some interval $\cI$ such that:
\begin{equation*}
(g_t,f_t) = (\psi^{\ast}_t g , f\circ\psi_t)
\end{equation*}

\noindent
is a Heterotic-Ricci flow.
\end{proposition}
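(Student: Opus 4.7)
The plan is to take $X := \varphi^{\sharp_g}$, the metric dual of the closed one-form $\varphi$, and let $\psi_t$ denote the one-parameter family of diffeomorphisms generated by $X$ on an interval $\cI \ni 0$ (all of $\mathbb{R}$ when $M$ is compact, since then $X$ is complete). Because $\varphi$ is closed, $\nabla^g \varphi$ is symmetric as a $(0,2)$-tensor, so $\mathcal{L}_X g = 2 \nabla^g \varphi$. Setting $g_t := \psi_t^{\ast} g$ and $f_t := f \circ \psi_t$, the fact that $\psi_t$ preserves orientation gives $\nu_{g_t} = \psi_t^{\ast} \nu_g$ and hence $H_t := f_t \nu_{g_t} = \psi_t^{\ast} H$, so it suffices to verify that $(g_t, f_t)$ solves the three-dimensional Heterotic-Ricci flow in the form given by Proposition \ref{prop:eqs3hrf}.

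Both sides of \eqref{eq:3dHRflowreformulated1}--\eqref{eq:3dHRflowreformulated2} are built naturally from $(g_t, f_t)$, while $\partial_t(\psi_t^{\ast} g) = \psi_t^{\ast} \mathcal{L}_X g$ and $\partial_t(f \circ \psi_t) = \psi_t^{\ast}(X(f))$. By diffeomorphism-equivariance, it therefore suffices to check the two equations at $t = 0$. For the metric equation I would solve \eqref{eq:motionHetsugra3d1} algebraically for $2 \nabla^g \varphi = \mathcal{L}_X g$ and compare coefficient-by-coefficient with the right-hand side of \eqref{eq:3dHRflowreformulated1}. The $\mathrm{Ric}^g \circ \mathrm{Ric}^g$, $\mathrm{Ric}^g$, $[\ast_g df, \mathrm{Ric}^g]$ and $df \otimes df$ pieces match directly, and the scalar multiples of $g$---namely $-2\bigl(\kappa |\mathrm{Ric}^g|_g^2 - \tfrac{\kappa}{2} s_g^2 + \tfrac{\kappa}{4} |df|_g^2 - \tfrac{1}{2} f^2 + \tfrac{\kappa}{8} f^4\bigr)$ from the soliton equation and $f^2 + \kappa\bigl(s_g^2 - 2|\mathrm{Ric}^g|_g^2 - \tfrac{1}{2} |df|_g^2 - \tfrac{1}{4} f^4\bigr)$ from the flow---cancel by pure arithmetic, with no appeal to the auxiliary trace equation.

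For equation \eqref{eq:3dHRflowreformulated2} at $t = 0$, I would compute $\partial_t f_t|_{t = 0} = X(f) = \langle \varphi, df \rangle_g$ and apply the algebraic soliton constraint $df = f \varphi$ from \eqref{eq:motionHetsugra3d2} to rewrite this as $f |\varphi|_g^2$. With the paper's convention $\delta^g = -\mathrm{Tr}_g \nabla^g$ on one-forms, $\tfrac{1}{2} \mathrm{Tr}_g(\mathcal{L}_X g) f = -f \delta^g \varphi$, and $\Delta_g f = \delta^g(f \varphi) = -\langle df, \varphi \rangle_g + f \delta^g \varphi = -f|\varphi|_g^2 + f \delta^g \varphi$. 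The three terms sum to zero, so \eqref{eq:3dHRflowreformulated2} holds at $t = 0$ and then, by naturality, for all $t \in \cI$.

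The main potential obstacle is purely notational bookkeeping: one must carefully organize the scalar $g$-contributions of \eqref{eq:motionHetsugra3d1} and \eqref{eq:3dHRflowreformulated1} and reconcile sign conventions for $\delta^g$, $\Delta_g$ and $\mathcal{L}_X$ to confirm the cancellations. No hidden geometric input is needed beyond $f \varphi = df$ and the symmetry of $\nabla^g \varphi$, both of which follow directly from the Heterotic soliton system.
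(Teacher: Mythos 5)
Your proposal is correct and follows essentially the same route as the paper: take $\psi_t$ to be the flow of $\varphi^{\sharp_g}$, use closedness of $\varphi$ to get $\mathcal{L}_{\varphi^{\sharp_g}}g=2\nabla^g\varphi$, match this term-by-term against the reformulated flow equation \eqref{eq:3dHRflowreformulated1} (which indeed coincides exactly with \eqref{eq:motionHetsugra3d1} solved for $2\nabla^g\varphi$, no trace identity needed), and verify \eqref{eq:3dHRflowreformulated2} from $\dd f=f\varphi$ and $\mathrm{Tr}_g(\nabla^g\varphi)=-\delta^g\varphi$. The only cosmetic difference is that you reduce to $t=0$ by equivariance while the paper carries the pullback $\psi_t^{\ast}$ through the computation for all $t$.
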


\begin{proof}
Suppose that $(g,\varphi,f)\in \Sol_{\kappa}(M)$ and define $\left\{ \psi_t\right\}_{t\in \cI}$ the one-parameter family of diffeomorphisms defined by the flow of $\varphi$. Then:
\begin{equation*}
\partial_t (\psi^{\ast}_t g) = \psi^{\ast}_t \mathcal{L}_{\varphi}g = 2\psi^{\ast}_t \nabla^{g}\varphi
\end{equation*}
\noindent
where we have used that $\varphi$ is closed. Plugging this equation into \eqref{eq:3dHRflowreformulated1} we directly obtain \eqref{eq:motionHetsugra3d1}. Regarding \eqref{eq:3dHRflowreformulated2}, we compute:
\begin{equation*}
\partial_t (f\circ \psi_t ) + \frac{1}{2} \mathrm{Tr}_{\psi^{\ast}_t g}(\partial_t \psi^{\ast}_t g) f\circ \psi_t  +  \Delta_{\psi^{\ast}_t g} (f\circ\psi_t) = \psi_t^{\ast} (\dd f(\varphi) + \mathrm{Tr}_g(\nabla^g\varphi) f + \delta^g\dd f) = 0\, .
\end{equation*}
\noindent
Here we have used that $\dd f = f\varphi$, which implies:
\begin{equation*}
\delta^g\dd f = -\dd f(\varphi) + f \delta^g\varphi
\end{equation*}
\noindent
together with the identity $\mathrm{Tr}_g(\nabla^g\varphi) = - \delta^g\varphi$. Hence $(g_t,f_t) = (\psi^{\ast}_t g , f\circ\psi_t)$ is a solution of the three-dimensional Heterotic-Ricci flow and consequently solutions of \eqref{eq:motionHetsugra3d1} and \eqref{eq:motionHetsugra3d2} are self-similar solutions of the Heterotic-Ricci flow.
\end{proof}

\begin{remark}
We expect Heterotic solitons in all dimensions to be actual solitons for the Heterotic-Ricci flow. However, in dimensions larger than four, $H_t$ is in general not closed, and therefore obtaining the solitons of the flow requires a precise understanding of the symmetries of the system when formulated either on a string structure \cite{Killingback,Redden,Waldorf} or or on a transitive Courant algebroid \cite{Baraglia:2013wua,Garcia-Fernandez:2016ofz}, see Remark \ref{remark:stringstructure}. We plan to come back to this issue in the future.
\end{remark}

\noindent
The strong condition introduced in Definition \ref{def:strongsystem} can, of course, be adapted to three dimensions, but in its generality, it reduces to an expression that is not particularly illuminating. We will come back to this point later, see subsection \ref{sec:3dstrongsolitons}.  


\subsection{General properties}


In this subsection, we will prove a number of structural results about three-dimensional Heterotic solitons.

\begin{proposition}
\label{prop:f0trivial}
Let $(M,g)$ be a closed Riemannian three-manifold. A three-dimensional Heterotic soliton is trivial if and only if $f=0$.
\end{proposition}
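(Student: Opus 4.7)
The direction $\Rightarrow$ is immediate by definition, since a trivial soliton has $H=0$ and hence $f=\ast_g H = 0$. For the converse, my plan is to extract from the system a single integrated scalar identity whose integrand is manifestly non-negative, and which must vanish, thereby forcing $\varphi=0$ and $\mathrm{Ric}^g=0$ pointwise. Flatness of $g$ will then follow from the fact that in dimension three the Weyl tensor vanishes identically, so Ricci-flatness is equivalent to flatness.

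Setting $f=0$, equation \eqref{eq:motionHetsugra3d1} collapses to
$$-\kappa\,\mathrm{Ric}^g\circ_g \mathrm{Ric}^g + (1+\kappa s_g)\,\mathrm{Ric}^g + \bigl(\kappa\,|\mathrm{Ric}^g|^2_g - \tfrac{\kappa}{2}\,s_g^2\bigr)\, g + \nabla^g\varphi = 0,$$
and \eqref{eq:motionHetsugra3d2} reduces to $s_g = 3\delta^g\varphi + 2|\varphi|^2_g$. First I would take the metric trace of the displayed equation, using $\mathrm{Tr}_g(\mathrm{Ric}^g\circ_g \mathrm{Ric}^g) = |\mathrm{Ric}^g|^2_g$ and $\mathrm{Tr}_g(\nabla^g\varphi) = -\delta^g\varphi$, obtaining
$$2\kappa\,|\mathrm{Ric}^g|^2_g + s_g - \tfrac{\kappa}{2}\,s_g^2 - \delta^g\varphi = 0.$$
Next I would eliminate $\delta^g\varphi$ using the dilaton equation in the form $\delta^g\varphi = \tfrac{1}{3}(s_g - 2|\varphi|^2_g)$, and then integrate over the closed manifold $M$, noting that $\int_M \delta^g\varphi\,\nu_g = 0$ and that the dilaton equation itself yields $\int_M s_g\,\nu_g = 2\int_M |\varphi|^2_g\,\nu_g$. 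After collecting terms, this produces the clean identity
$$\int_M \Bigl(2\kappa\,|\mathrm{Ric}^g|^2_g - \tfrac{\kappa}{2}\,s_g^2 + 2|\varphi|^2_g\Bigr)\,\nu_g = 0.$$

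The decisive input is then the pointwise Cauchy--Schwarz bound $s_g^2 \leq 3\,|\mathrm{Ric}^g|^2_g$, valid in dimension three, which together with $\kappa>0$ gives
$$2\kappa\,|\mathrm{Ric}^g|^2_g - \tfrac{\kappa}{2}\,s_g^2 \;\geq\; \tfrac{\kappa}{2}\,|\mathrm{Ric}^g|^2_g \;\geq\; 0.$$
The integrand is therefore non-negative and, having zero integral, must vanish identically. Hence $\varphi\equiv 0$ and $\mathrm{Ric}^g\equiv 0$, which in dimension three implies $g$ is flat, and the soliton is trivial.

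The only delicate point in this plan is the bookkeeping that yields an integrand of definite sign: the trace of the Einstein equation by itself is not sign-definite, and it is only after using the dilaton equation to trade $\delta^g\varphi$ for $s_g$ and $|\varphi|^2_g$ and invoking the compactness identity $\int_M s_g\,\nu_g = 2\int_M |\varphi|^2_g\,\nu_g$ that the quadratic curvature terms combine with precisely the right coefficient to be absorbed by the three-dimensional Cauchy--Schwarz bound.
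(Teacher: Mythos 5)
Your proof is correct and follows essentially the same route as the paper: trace the Einstein equation with $f=0$, eliminate $\delta^g\varphi$ via the dilaton equation, and integrate over the closed manifold to obtain a sign-definite integrand forcing $\varphi=0$ and $\mathrm{Ric}^g=0$, hence flatness in dimension three. The only cosmetic difference is that the paper rewrites $2\kappa\vert\mathrm{Ric}^g\vert_g^2-\tfrac{\kappa}{2}s_g^2$ as the manifestly non-negative $2\kappa\vert\mathrm{Ric}^g_0\vert_g^2+\tfrac{\kappa}{6}s_g^2$ via the trace-free decomposition of the Ricci tensor, which is precisely the content of your Cauchy--Schwarz bound $s_g^2\le 3\vert\mathrm{Ric}^g\vert_g^2$.
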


\begin{proof}
The \emph{only if} direction holds by definition. Therefore, assume that $f=0$. The trace of Equation \eqref{eq:motionHetsugra3d1} reduces to:
\begin{equation*}
\delta^{g}\varphi = 2\kappa \vert \mathrm{Ric}^{g}_0\vert^2_g +\frac{\kappa}{6} s_g^2 + s_g
\end{equation*}

\noindent
where $\mathrm{Ric}^{g}_0 = \mathrm{Ric}^g - s_g\, g/3$ is the trace-free part of the Ricci tensor. On the other hand, the second equation in \eqref{eq:motionHetsugra3d1} reduces to:
\begin{equation*}
s_g= 3\, \delta^{g} \varphi+2 \vert \varphi\vert^2_g\, ,
\end{equation*}

\noindent
Combining the previous equations and integrating we obtain:
\begin{equation*}
\int_M (2\kappa \vert \mathrm{Ric}^{g}_0\vert^2_g +\frac{\kappa}{6} s_g^2 + 2\vert \varphi\vert^2) \nu_g = 0
\end{equation*}

\noindent
where $\nu_g$ is the volume form of $g$. Hence $\varphi=s_g=\vert \mathrm{Ric}^{g}_0\vert^2_g=0$, so $g$ is Ricci-flat. Since $M$ is three-dimensional implies that $g$ is in fact a flat Riemannian metric.
\end{proof}

\begin{lemma}
\label{lemma:cases}
Let $(g,\varphi,f)$ be a non-trivial three-dimensional Heterotic soliton. Then, there exists a function $\phi \in C^{\infty}(M)$ such that $\varphi = \dd \phi$ and $f = c\, e^{\phi}$ for a non-zero constant $c\in \mathbb{R}^{\ast}$.
\end{lemma}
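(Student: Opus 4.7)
The plan has three easy steps: first show that $f$ is nowhere zero, then observe that $\varphi$ is globally exact, and finally verify that $fe^{-\phi}$ is constant for any primitive $\phi$ of $\varphi$.

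For the first step, I would invoke Proposition~\ref{prop:f0trivial} to rule out $f \equiv 0$ from the non-triviality assumption on the soliton. Then the first equation in \eqref{eq:motionHetsugra3d2}, namely $f\varphi = \dd f$, pulled back to any smooth curve $\gamma\colon I\to M$ becomes the scalar linear ODE
\begin{equation*}
(f\circ\gamma)'(t) = \varphi(\gamma'(t))\,(f\circ\gamma)(t),
\end{equation*}
whose only solution with zero initial value is identically zero. Consequently the vanishing locus of $f$ is both open (any zero propagates along curves emanating from it) and closed; since $M$ is connected and $f \not\equiv 0$, this forces $f$ to vanish nowhere.

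For the remaining two steps, dividing $f\varphi = \dd f$ by the nowhere-zero function $f$ yields $\varphi = \dd f/f = \dd \log |f|$, so $\varphi$ is exact. Pick any primitive $\phi \in C^{\infty}(M)$ of $\varphi$, so that $\dd\phi = \varphi$. A one-line computation gives
\begin{equation*}
\dd(fe^{-\phi}) = e^{-\phi}(\dd f - f\,\dd\phi) = e^{-\phi}(f\varphi - f\varphi) = 0,
\end{equation*}
so $fe^{-\phi}$ is a (real) constant $c$, necessarily nonzero since $f$ does not vanish. This gives $f = ce^{\phi}$, as claimed. The only step with any real content is the ODE argument promoting $f \not\equiv 0$ to $f$ nowhere zero; the rest reduces to an exact-differential observation.
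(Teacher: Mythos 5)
Your proposal is correct and follows essentially the same route as the paper: Proposition~\ref{prop:f0trivial} rules out $f\equiv 0$, the linear ODE obtained by restricting $f\varphi=\dd f$ to curves (equivalently, your open-and-closed vanishing-locus argument) promotes this to $f$ nowhere vanishing on the connected manifold $M$, and then $\varphi=\dd f/f$ gives exactness and $f=c\,e^{\phi}$. Your write-up is in fact slightly more explicit than the paper's in the final step, where the paper simply asserts the equivalence once $f$ is nowhere zero.
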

 
\begin{proof}
Let $(g,\varphi,f)$ be a non-trivial three-dimensional Heterotic soliton. We first prove that $f$ is nowhere vanishing. To do this, assume that $f(m) = 0$ for a given $m\in M$. For every smooth curve:
\begin{equation*}
\gamma \colon \cI \to M \, ,\quad t\mapsto \gamma_t
\end{equation*}

\noindent
where $\cI$ is an interval containing $0$ and $\gamma(0) = m$, the first equation in \eqref{eq:motionHetsugra3d2} implies:
\begin{equation*}
\partial_t(f\circ \gamma_t) = (\gamma^{\ast}\varphi)(\partial_t  (f\circ \gamma_t))
\end{equation*}

\noindent
which is a linear ordinary differential equation for $f\circ \gamma\colon \cI\to \mathbb{R}$. Since $f(m)=0$, the existence and uniqueness of solutions to the previous ordinary differential equation imply that $f\circ \gamma_t = 0$ for all $t\in \cI$. Since this holds for every such $\gamma\colon \cI\to M$ and $M$ is connected, we conclude that $f$ vanishes identically, which is not allowed since $(g,f)$ is by assumption a non-trivial Heterotic soliton, cf. Proposition \ref{prop:f0trivial}. Hence $f$ is nowhere vanishing, in which case the first equation in \eqref{eq:motionHetsugra3d2} is equivalent to $\varphi=\dd \phi$ and $f = c e^{\phi}$ for $\phi \in C^\infty(M)$ and for a non-zero constant $c\in\mathbb{R}^{\ast}$.
\end{proof}

\noindent
By the previous proposition, we will always assume from now on that $f= c e^{\phi}$ for any non-trivial three-dimensional Heterotic soliton $(g,\varphi,f)\in \Sol_{\kappa}(M)$. This allows us to denote three-dimensional Heterotic solitons simply as pairs $(g,f)\in \Sol_{\kappa}(M)$. In particular, we will sometimes refer to $f$ as the \emph{dilaton} of the three-dimensional Heterotic soliton $(g,f)\in \Sol_{\kappa}(M)$.

\begin{proposition}
Let $(g,f)\in \Sol_{\kappa}(M)$ be a non-trivial Heterotic soliton. Then, the scalar curvature $s_g$ of $g$ is strictly negative on some non-empty open set of $M$.
\end{proposition}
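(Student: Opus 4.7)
The plan is to apply the maximum principle to the dilaton $\phi$ supplied by Lemma \ref{lemma:cases}, and to then read off the sign of $s_g$ from the scalar equation in \eqref{eq:motionHetsugra3d2}.

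By Lemma \ref{lemma:cases}, since $(g,f)$ is non-trivial there exist $\phi\in C^{\infty}(M)$ and a non-zero constant $c\in\mathbb{R}^{\ast}$ with $\varphi=\dd\phi$ and $f=c\,e^{\phi}$; in particular $f$ vanishes nowhere. As $M$ is compact, the smooth function $\phi$ attains its minimum at some point $p\in M$, and at $p$ we have $\dd\phi|_p=0$ and $\nabla^g\dd\phi|_p\geq 0$ as a symmetric bilinear form on $T_pM$. Hence $\varphi|_p=0$.

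With the paper's convention $\Delta_g=\delta^g\dd+\dd\delta^g$, the standard identity $\Delta_g\phi=-\mathrm{tr}_g(\nabla^g\dd\phi)$ on functions then yields
$$\delta^g\varphi|_p=\Delta_g\phi|_p=-\mathrm{tr}_g(\nabla^g\dd\phi)|_p\leq 0.$$
Inserting this into the second equation of \eqref{eq:motionHetsugra3d2} evaluated at $p$ produces
$$s_g(p)=3\,\delta^g\varphi|_p+2|\varphi|^2_g|_p-\tfrac{1}{2}f(p)^2\leq -\tfrac{1}{2}f(p)^2<0,$$
where the strict inequality follows from $f(p)=c\,e^{\phi(p)}\neq 0$. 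By continuity of $s_g$ there is then an open neighbourhood of $p$ on which $s_g<0$, which is the claim.

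I do not foresee any real obstacle in this argument: it is essentially just the maximum principle applied to $\phi$ at its minimum, combined with the fact that the scalar equation in \eqref{eq:motionHetsugra3d2} contributes a manifestly non-positive term $-\tfrac{1}{2}f^2$, which is non-vanishing because $f$ is nowhere zero. The only point requiring care is the sign convention for $\Delta_g$ on functions under the Hodge convention $\Delta_g=\delta^g\dd+\dd\delta^g$ used throughout the paper, and this convention is precisely what forces $\delta^g\varphi|_p\leq 0$ at a minimum of $\phi$.
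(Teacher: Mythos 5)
Your proof is correct and is essentially the paper's own argument: the paper rewrites the scalar equation of \eqref{eq:motionHetsugra3d2} as $2f^2 s_g = -6f^2\,\mathrm{Tr}_g(\mathrm{Hes}_g(\log f)) + 4|\dd f|_g^2 - f^4$ and evaluates at an absolute minimum of $f$, which (since $\log|f| = \log|c| + \phi$ and $\delta^g\varphi = -\mathrm{Tr}_g(\mathrm{Hes}_g(\phi))$) is exactly your evaluation of $s_g = 3\delta^g\varphi + 2|\varphi|^2_g - \tfrac12 f^2$ at a minimum of $\phi$. Your sign bookkeeping for $\delta^g$ matches the paper's convention $\mathrm{Tr}_g(\nabla^g\varphi) = -\delta^g\varphi$, so the argument goes through as written.
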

 
\begin{proof}
Let $(g,f)$ be a Heterotic soliton and consider the second equation in \eqref{eq:motionHetsugra3d2}, which can be equivalently rewritten as follows:
\begin{equation*}
2 f^2 s_g = - 6 f^2 \mathrm{Tr}_g(\mathrm{Hes}_g(\log(f))) + 4\vert \dd f\vert_g^2 -f^4 \, ,
\end{equation*}

\noindent
where $\mathrm{Hes}_g(\log(f))$ denotes the Hessian of $\log(f)$ with respect to $g$. If $f$ is constant then the previous equation reduces to $2 f^2 s_g = -f^4$, which immediately implies that $s_g$ is a strictly negative constant since $f$ cannot be zero. If $f$ is non-constant then evaluating the previous equation at an absolute minimum $m\in M$ of $f$ we obtain:
\begin{equation*}
2 f_m^2 (s_g)_m = - 6 f^2 \mathrm{Tr}_g(\mathrm{Hes}_g(\log(f)))_m -f^4_m 
\end{equation*}

\noindent
where the subscript $m$ denotes evaluation at $m\in M$. Since $m$ is an absolute minimum and $f^2$ is strictly positive, we have $\mathrm{Tr}_g(\mathrm{Hes}_g(\log(f)))_m\geq 0$ and we conclude.
\end{proof}

\begin{remark}
The previous proposition rules out the possibility of having three-dimensional Heterotic solitons with positive scalar curvature, which in dimension three is obstructed \cite{KazdanWarner}. In contrast, any function on $M$ that is negative on some open set is the scalar curvature of a metric on $M$.
\end{remark}

\noindent
Given the complexity of the Heterotic soliton system, it is natural to look for examples that satisfy natural curvature conditions. In the following result, we consider metrics whose Ricci tensor has constant principal curvatures, where the latter are defined as the eigenvalues of the Ricci endomorphism.

\begin{proposition}
\label{prop:fconstant}
A non-trivial three-dimensional Heterotic soliton $(g,f)\in \Sol_{\kappa}(M)$ with constant dilaton $f$ has constant principal Ricci curvatures $(\mu_1,\mu_2,\mu_3)$ which  satisfy one of the following conditions:
\begin{enumerate}[leftmargin=*]
\item $ \kappa f^2 = 1$  and $(\mu_1 = \mu_2 = -\frac{1}{2\kappa} , \mu_3 =  \frac{1}{2\kappa})$. 
	
\
	
\item $ \kappa f^2 = 2$  and $(\mu_1 = \mu_2 =0, \mu_3 =  -\frac{1}{\kappa})$. In particular, the universal cover of $M$ is isometric to either $\widetilde{\mathrm{Sl}}(2,\mathbb{R})$ or $\mathrm{E}(1,1)$ equipped with a left-invariant metric.
	
\

\item $ \kappa f^2 = 3$  and $(\mu_1 = \mu_2  = \mu_3 =  -\frac{1}{2\kappa})$. In particular $(M,g)$ is a hyperbolic three-manifold endowed with a metric of scalar curvature $-\frac{3}{2\kappa}$.
\end{enumerate}
\end{proposition}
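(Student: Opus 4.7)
The plan is to reduce the Heterotic soliton system to a pointwise algebraic condition on the principal Ricci curvatures and then carry out a short case analysis. Since $f$ is by hypothesis a non-zero constant, the relation $\dd f = f\varphi$ from \eqref{eq:motionHetsugra3d2} immediately forces $\varphi = 0$. Substituting $\dd f = 0$ and $\varphi = 0$ into \eqref{eq:motionHetsugra3d2} gives $s_g = -\tfrac12 f^2$, so the scalar curvature of $g$ is a negative constant; plugging these data into \eqref{eq:motionHetsugra3d1} makes the terms $\tfrac{\kappa}{2}[\ast_g \dd f,\mathrm{Ric}^g]$, $\tfrac{\kappa}{4}\, \dd f \otimes \dd f$ and $\nabla^g\varphi$ vanish and, after collapsing the $s_g^2$ and $f^4$ scalar contributions using $s_g = -f^2/2$, reduces the Einstein part to the tensorial identity
\begin{equation*}
-\kappa\, \mathrm{Ric}^g \circ_g \mathrm{Ric}^g + (1 - \kappa f^2)\,\mathrm{Ric}^g + \bigl( \kappa \vert\mathrm{Ric}^g\vert^2_g - \tfrac12 f^2 \bigr)\, g = 0.
\end{equation*}

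Next I would diagonalize $\mathrm{Ric}^g$ in a local orthonormal frame. Writing the principal curvatures as $\mu_1,\mu_2,\mu_3$, the identity above becomes, for each $i=1,2,3$,
\begin{equation*}
\kappa\, \mu_i^2 - (1 - \kappa f^2)\,\mu_i - \kappa\, Q + \tfrac12 f^2 = 0, \qquad Q := \mu_1^2+\mu_2^2+\mu_3^2.
\end{equation*}
The coefficients do not depend on $i$, so the three $\mu_i$ are roots of a single univariate quadratic and therefore take at most two distinct values at every point; combined with $\mu_1+\mu_2+\mu_3 = s_g = -\tfrac12 f^2$, this also shows that the principal Ricci curvatures are \emph{constant} on $M$.

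The case analysis is then routine. Either all three $\mu_i$ coincide (the Einstein case) or, up to reordering, $\mu_1 = \mu_2 = \alpha$ and $\mu_3 = \beta$ with $\alpha\neq\beta$. The Einstein case forces $\mu_i = -f^2/6$, and substituting this back into the quadratic yields $\kappa f^2 = 3$, giving case (3) with $\mu_i = -1/(2\kappa)$. In the non-Einstein case, the two roots $\alpha,\beta$ of the quadratic satisfy $\alpha+\beta = 1/\kappa - f^2$, which together with $2\alpha+\beta = -f^2/2$ gives $\alpha = f^2/2 - 1/\kappa$ and $\beta = 2/\kappa - 3 f^2/2$; feeding this into the product-of-roots identity $\alpha\beta = -Q + f^2/(2\kappa)$ with $Q = 2\alpha^2+\beta^2$ produces the polynomial equation $(\kappa f^2 - 1)(\kappa f^2 - 2) = 0$, whose roots reproduce exactly the eigenvalue configurations of cases (1) and (2).

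It remains to extract the geometric conclusions. Case (3) is Einstein in dimension three, hence of constant sectional curvature; since $s_g = -3/(2\kappa) < 0$, $(M,g)$ is a hyperbolic three-manifold. For case (2), the Ricci operator has constant principal curvatures $(0,0,-1/\kappa)$, and the classical results of Sekigawa and Milnor on three-dimensional Riemannian manifolds with constant principal Ricci curvatures force local homogeneity and identify the universal cover as $\widetilde{\mathrm{Sl}}(2,\mathbb{R})$ or $\mathrm{E}(1,1)$ endowed with a left-invariant metric. The main obstacle in the argument is arguably this last step: for cases (1) and (3) the algebra does everything, whereas realising case (2) geometrically requires invoking the classification of three-dimensional curvature-homogeneous metrics of this specific Segre type.
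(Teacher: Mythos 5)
Your reduction to the pointwise algebraic equation
\begin{equation*}
-\kappa\, \mathrm{Ric}^g \circ_g \mathrm{Ric}^g + (1-\kappa f^2)\,\mathrm{Ric}^g + \bigl(\kappa\vert\mathrm{Ric}^g\vert_g^2 - \tfrac12 f^2\bigr)g = 0,\qquad s_g=-\tfrac12 f^2,
\end{equation*}
and the subsequent eigenvalue analysis are correct and coincide with the paper's route: the paper takes the trace to fix $\vert\mathrm{Ric}^g\vert_g^2$, observes that the resulting quadratic for the principal curvatures has discriminant equal to one, and reads off the three cases; your Vieta-plus-trace bookkeeping is the same computation in a slightly different order, and your consistency condition $(\kappa f^2-1)(\kappa f^2-2)=0$ in the non-Einstein case checks out. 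The Einstein case and the identification of case (3) as hyperbolic are also fine.

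The genuine gap is your justification of the \emph{``In particular''} clause of case (2). You assert that ``the classical results of Sekigawa and Milnor on three-dimensional Riemannian manifolds with constant principal Ricci curvatures force local homogeneity.'' No such result exists, and the attribution is essentially backwards: Sekigawa is known precisely for constructing complete curvature-homogeneous three-manifolds (i.e.\ with constant Ricci eigenvalues) that are \emph{not} locally homogeneous, and Kowalski, Bueken and others later produced explicit non-homogeneous families with constant Ricci eigenvalues of the degenerate Segre type $\rho_1=\rho_2=0\neq\rho_3$ --- exactly the type $(0,0,-1/\kappa)$ arising in case (2). So constancy of the principal Ricci curvatures alone does not let you conclude local homogeneity, and Milnor's classification of left-invariant metrics (which is what identifies $\widetilde{\mathrm{Sl}}(2,\mathbb{R})$ versus $\mathrm{E}(1,1)$ from the sign pattern of the eigenvalues) only becomes available \emph{after} local homogeneity is established. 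The paper does not close this step by a general curvature-homogeneity principle either; it defers to the detailed argument of \cite[Proposition 4.6 and Theorem 4.9]{Moroianu:2021kit}, where the identification is obtained using additional structure coming from the soliton equations rather than from the eigenvalue data alone. To repair your proof you would need either to import that argument or to invoke (and verify the applicability of) a theorem ruling out the non-homogeneous curvature-homogeneous metrics in the compact setting at hand; the blanket statement you rely on is false.
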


\begin{proof}
Assume that $f$ is a constant, which by Proposition \ref{prop:f0trivial} must then be non-zero. With this assumption, Equations \eqref{eq:motionHetsugra3d1} and \eqref{eq:motionHetsugra3d2} reduce to:
\begin{eqnarray*}
-  \kappa \, \mathrm{Ric}^g \circ_g \mathrm{Ric}^g + (1 - \kappa f^2  )\mathrm{Ric}^{g}  + ( \kappa  \vert \mathrm{Ric}^g \vert_g^2  - \frac{1}{2} f^2)  g = 0\, , \qquad s_g=  -\frac{1}{2} f^2\, .
\end{eqnarray*}

\noindent
Taking the trace of the first equation and combining it with the second equation we find:
\begin{equation*}
2 \kappa \vert\mathrm{Ric}^g \vert_g^2 = 2 f^2 - \frac{\kappa f^4}{2}\,,
\end{equation*}

\noindent
which plugged back into the previous equations yields:
\begin{equation}\label{eq:e3d}
-  \kappa \, \mathrm{Ric}^g \circ_g \mathrm{Ric}^g + (1 - \kappa f^2  )\mathrm{Ric}^{g}  + \frac{1}{2} \left (f^2 - \frac{\kappa f^4}{2}\right)  g = 0\, , \qquad s_g=  -\frac{1}{2} f^2\, .
\end{equation}

\noindent
The discriminant of the previous equation can be verified to be one, and by solving it we obtain the cases listed in the statement of the Proposition, similarly to  \cite[Proposition 4.6 and Theorem 4.9]{Moroianu:2021kit}, whose proof involves solving the same algebraic equation for the Ricci tensor. 
\end{proof}

\begin{remark}
In case $(1)$ of the previous proposition, Reference \cite{Moroianu:2021kit} proves the existence of an associated Sasakian structure, possibly involving a different Riemannian metric, constructed out of the unit-norm eigenvector of $\mathrm{Ric}^g$ with positive eigenvalue. Very little is currently known about the basic properties of this auxiliary Sasakian structure.
\end{remark}

\noindent
Ideally, we would like to construct or, at least, prove the existence of three-dimensional Heterotic solitons $(g,f)$ with non-constant dilaton $f$. The following result excludes for this purpose the case of Heterotic solitons with Einstein metrics. 

\begin{theorem}
\label{thm:constantdilatonEinstein}
All Einstein three-dimensional Heterotic solitons have constant dilaton.
\end{theorem}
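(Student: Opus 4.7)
The plan is to assume, for contradiction, that $f$ is non-constant while $g$ is Einstein with $\mathrm{Ric}^g=\lambda g$ for a (necessarily constant in dimension three) $\lambda$, and to derive a contradiction by combining the Einstein equation with the scalar constraint and a compactness argument on the extrema of $f$. By Lemma~\ref{lemma:cases} I may take $f>0$, so on compact $M$ the dilaton attains $0<f_-:=\min f \le f_+:=\max f$, with $f_-<f_+$ under the non-constancy assumption. Substituting $\mathrm{Ric}^g=\lambda g$ into~\eqref{eq:motionHetsugra3d1}, the commutator $[\ast_g\,\dd f,\mathrm{Ric}^g]$ vanishes (since $\mathrm{Ric}^g$ is a multiple of the identity) and $\mathrm{Ric}^g\circ_g\mathrm{Ric}^g=\lambda^2 g$, so the tensorial equation collapses to
\[
\nabla^g\varphi \;=\; -A\,g \,-\, B\,\varphi\otimes\varphi,\qquad B:=\tfrac{\kappa f^2}{4},
\]
for an explicit scalar $A=A(\lambda,\kappa,f,|\varphi|_g^2)$ built from $\lambda$, $\kappa$, $f$, and $|\varphi|_g^2$.

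Taking the $g$-trace of this identity yields $\delta^g\varphi=3A+B|\varphi|_g^2$; inserted into the second equation of~\eqref{eq:motionHetsugra3d2} under $s_g=3\lambda$ it produces a second expression $A=\tfrac{\lambda}{3}-\tfrac{3B+2}{9}|\varphi|_g^2+\tfrac{f^2}{18}$. Equating the two expressions for $A$ and collecting $|\varphi|_g^2$-terms, whose coefficient $\tfrac{2(6B+1)}{9}=\tfrac{3\kappa f^2+2}{9}$ is strictly positive, produces the key pointwise identity
\[
\tfrac{3\kappa f^2+2}{9}\,|\varphi|_g^2 \;=\; R(f^2),
\]
where $R(y)$ is an explicit downward-opening quadratic in $y$ whose coefficients are polynomial in $\lambda$ and $\kappa$. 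In particular, $|\varphi|_g^2$ is determined pointwise as a function of $f$ alone.

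At any critical point $m_c$ of $f$ one has $\varphi(m_c)=0$, so the identity above forces $R(f(m_c)^2)=0$. Hence $y_\pm:=f_\pm^2$ are both roots of the (denominator-cleared) quadratic
\[
9\kappa\,y^2 \,-\, (36\kappa\lambda+40)\,y \,+\, 48\lambda + 36\kappa\lambda^2 \;=\; 0.
\]
Simultaneously, the Einstein equation at critical points reads $\mathrm{Hess}(f)=-f\,A(f,0)\,g$ with $A(f,0)=\tfrac{\lambda}{3}+\tfrac{f^2}{18}$ (obtained from the trace-plus-constraint argument applied with $|\varphi|_g^2=0$); the sign conditions $\Delta_g f \ge 0$ at $m_+$ and $\Delta_g f \le 0$ at $m_-$ (in the paper's convention $\Delta_g=\delta^g\dd$) translate to $f_-^2\le -6\lambda\le f_+^2$. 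Since $f_-^2>0$, this forces $\lambda<0$.

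The final step is Vieta's formulas applied to the displayed quadratic: both roots $y_\pm$ are simultaneously positive iff their sum $(36\kappa\lambda+40)/(9\kappa)$ and product $4\lambda(4+3\kappa\lambda)/(3\kappa)$ are both positive. The sum condition reads $\lambda>-\tfrac{10}{9\kappa}$, while positivity of the product combined with $\lambda<0$ forces $\lambda<-\tfrac{4}{3\kappa}=-\tfrac{12}{9\kappa}$; since $-\tfrac{12}{9\kappa}<-\tfrac{10}{9\kappa}$ these conditions are incompatible. The degenerate cases---a repeated root, or exactly one positive root---immediately collapse $y_+=y_-$ and force $f$ to be constant, while complex roots are excluded because compactness requires $f$ to have critical points. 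This yields the desired contradiction, and hence $f$ is constant. The main obstacle will be the algebraic bookkeeping in the second paragraph: extracting the explicit quadratic $R$ (equivalently the displayed polynomial in $y$) with the right sign conventions, so that the Vieta closure in the final paragraph goes through as stated.
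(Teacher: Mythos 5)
Your proposal is correct; I checked the algebra and it closes. The key identity $\tfrac{3\kappa f^2+2}{9}\vert\varphi\vert_g^2=R(f^2)$ with $R(y)=-\tfrac{\kappa}{8}y^2+(\tfrac{\kappa\lambda}{2}+\tfrac{5}{9})y-\tfrac{2\lambda}{3}-\tfrac{\kappa\lambda^2}{2}$ agrees (after clearing $\vert\dd f\vert_g^2=f^2\vert\varphi\vert_g^2$) with the function $\alpha$ in the paper's proof, and your cleared quadratic, the bound $f_-^2\le-6\lambda\le f_+^2$ from the second-derivative test, and the Vieta incompatibility $-\tfrac{12}{9\kappa}<\lambda<-\tfrac{10}{9\kappa}$ are all verified. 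The first half of your argument is the same as the paper's: both substitute $\mathrm{Ric}^g=\lambda g$, trace the Einstein equation, and combine with the dilaton equation to express $\vert\varphi\vert_g^2$ (equivalently $\vert\dd f\vert_g^2$) as an explicit rational function of $f$. The closing step is genuinely different. The paper contracts the full Einstein equation with $\dd f$, using $\nabla^g_{\dd f}\varphi=\bigl(\tfrac{\alpha'(f)}{2f}-\tfrac{\alpha(f)}{f^2}\bigr)\dd f$, and obtains $P(f)\,\dd f=0$ for a polynomial $P$ shown to be non-vanishing by inspecting its leading asymptotics; this is a purely pointwise argument that does not invoke compactness. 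You instead evaluate the scalar identity at the extrema of $f$, so that $\min f^2$ and $\max f^2$ are two distinct positive roots of an explicit quadratic, pin down $\lambda<0$ via the Hessian sign at the extrema, and rule this out by Vieta. Your route leans on compactness (which is a standing hypothesis in this section, so that is fine) but buys a cleaner contradiction: you never have to check that a degree-several polynomial in $f$ is not identically zero, only the signs of the sum and product of two roots. One cosmetic remark: the "degenerate cases" sentence at the end is superfluous --- once $f$ is assumed non-constant, $y_-\neq y_+$ are automatically two distinct positive real roots, so the repeated-root and complex-root cases never arise and need no separate discussion.
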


\begin{proof}
If $(M,g)$ is Einstein, so that $\mathrm{Ric}^g=\frac13 s_g g$, the Einstein equation \eqref{eq:motionHetsugra3d1} becomes:
\begin{equation}
    \label{eq:motionHetsugra3d1e}
 \left (\frac13 s_g-\frac\kappa6s_gf^2+\frac{\kappa}{18} s_g^2+\frac{\kappa}{4} \vert \dd f  \vert_g^2  - \frac{1}{2} f^2+\frac{\kappa}{8}f^4\right)  g  + \frac{\kappa }{4} \dd f \otimes \dd f + \nabla^{g}\varphi= 0\, .
\end{equation}
Taking the trace with respect to $g$ in this equation yields:
\begin{equation}
    \label{eq:motionHetsugra3d1et}
 s_g-\frac\kappa2 s_g f^2+\frac{\kappa}{6} s_g^2+\kappa \vert \dd f  \vert_g^2  - \frac{3}{2} f^2+\frac{3\kappa}{8}f^4 - \delta^{g}\varphi= 0\, .
\end{equation}
Using now the dilaton equations \eqref{eq:motionHetsugra3d2} we readily obtain $\vert \dd f\vert_g^2=\alpha(f)$, for some explicit rational function $\alpha$ given by:
\begin{equation*}
\alpha(t):=\frac{-\frac23 s_g+\frac\kappa2s_gt^2-\frac{\kappa}{6} s_g^2  +\frac{5}{3} t^2-\frac{3\kappa}{8}t^4}{\kappa+\frac2{3t^2}} = \frac{-2s_gt^2+\frac 32 \kappa s_gt^4-\frac 12\kappa s_g^2t^2 +5 t^4-\frac{9\kappa}{8}t^6}{3\kappa t^2+2}
\end{equation*}

\noindent
Applying then \eqref{eq:motionHetsugra3d1e} to (the metric dual of) $\dd f$ and using that:
\begin{equation*}
\nabla^g_{\dd f}\varphi=\nabla^g_{\dd f}\left(\frac{\dd f}f\right)=\frac12 \dd (\vert \dd f\vert_g^2)\frac1f-\frac{\vert \dd f\vert_g^2}{f^2}\dd f=\left(\frac{\alpha'(f)}{2f}-\frac{\alpha(f)}{f^2}\right)\dd f
\end{equation*}

\noindent
we obtain:
\begin{equation}
\label{eq:al}
\left (\frac13s_g-\frac\kappa6s_gf^2+\frac{\kappa}{18} s_g^2+\frac{\kappa}{2} \alpha(f)  - \frac{1}{2} f^2+\frac{\kappa}{8}f^4+\frac{\alpha'(f)}{2f}-\frac{\alpha(f)}{f^2}\right) \dd f= 0\, .
\end{equation}

\noindent
Since $\alpha(t)\sim_{t\to+\infty}-\frac38t^4$ and $\alpha'(t)\sim_{t\to+\infty} -\frac32 t^3$, we deduce from \eqref{eq:al} that $P(f)\dd f=0$, where $P$ is some (explicit) non-vanishing polynomial. This shows that the dilaton $f$ is constant.
\end{proof}

\noindent
If we weaken the condition of having constant principal Ricci curvatures in order to construct Heterotic solitons with non-constant dilaton we can rapidly encounter important obstructions, as illustrated by the following result.

\begin{proposition}
Let $(g,f)\in \Sol_{\kappa}(M)$ be a non-trivial three-dimensional Heterotic soliton such that $\dd f\neq 0$ and:
\begin{equation*}
\dd s_g = 0\, , \qquad \dd \vert\mathrm{Ric}^g\vert_g^2 = 0\, .
\end{equation*}

\noindent
Then $M$ is diffeomorphic to the sphere $S^3$.
\end{proposition}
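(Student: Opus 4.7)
The plan is to use the algebraic structure of the three-dimensional Einstein equation \eqref{eq:motionHetsugra3d1} together with the two differential constraints $\dd s_g = 0$ and $\dd \vert \mathrm{Ric}^g \vert_g^2 = 0$ to force the three principal Ricci curvatures of $g$ to be constant functions on $M$. First I would take the full $g$-inner product of \eqref{eq:motionHetsugra3d1} with $\mathrm{Ric}^g$; the commutator term drops out because $\tr([\ast_g \dd f, \mathrm{Ric}^g]\circ \mathrm{Ric}^g) = 0$, and the remaining identity expresses $\tr((\mathrm{Ric}^g)^3)$ in terms of $s_g$, $\vert \mathrm{Ric}^g \vert_g^2$, $f$, $\vert \dd f \vert_g^2$, $\mathrm{Ric}^g(\dd f, \dd f)$ and $\langle \nabla^g \varphi, \mathrm{Ric}^g\rangle_g$. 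By Newton's identities, once $s_g$ and $\vert \mathrm{Ric}^g \vert_g^2$ are constant, constancy of $\tr((\mathrm{Ric}^g)^3)$ is equivalent to that of $\det(\mathrm{Ric}^g)$, which in turn is equivalent to all three principal Ricci curvatures being constant.

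The main difficulty is closing this bootstrap. Differentiating the scalar identity produces terms $\dd(\mathrm{Ric}^g(\dd f, \dd f))$ and $\dd \langle \nabla^g\varphi, \mathrm{Ric}^g\rangle_g$ that must be controlled using the contracted Bianchi identity $\nabla^{g\ast}\mathrm{Ric}^g = -\frac{1}{2} \dd s_g = 0$, the formula $\nabla^g\varphi = \mathrm{Hess}^g(f)/f - \varphi \otimes \varphi$ coming from $\dd f = f\varphi$, the dilaton equation $s_g = 3\delta^g\varphi + 2\vert\varphi\vert_g^2 - \frac{1}{2} f^2$ used to trade second derivatives of $f$ for curvature data, and the Bochner formula applied to the closed one-form $\varphi$. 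After iterating these substitutions, the hypothesis $\dd f \neq 0$ should allow one to cancel a factor of $\dd f$ from the resulting identity and conclude $\dd\det(\mathrm{Ric}^g) = 0$.

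Once the principal Ricci curvatures are established as constants, Theorem \ref{thm:constantdilatonEinstein} rules out the Einstein case, since it would force $f$ to be constant and contradict $\dd f \neq 0$; therefore two of the eigenvalues coincide and the third differs. The analysis then parallels Proposition \ref{prop:fconstant}: the remaining algebraic equation for $\mathrm{Ric}^g$ pins down its spectrum up to a discrete choice, and combined with the dilaton constraint $\dd f = f\varphi$ and the nowhere-vanishing character of $f$, the eigendistribution of the distinguished eigenvalue defines a globally trivializable line bundle on $M$ whose Frobenius structure matches that of a left-invariant metric on a three-dimensional Lie group. The global existence of the nowhere-zero, non-constant function $f$ on $M$, together with the classification of compact quotients of such model geometries, is only compatible with $M$ being simply connected and smoothly equivalent to $S^3$.
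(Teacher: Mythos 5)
Your strategy is genuinely different from the paper's, but it contains two gaps that I do not see how to close. The central one is the ``bootstrap'' in your second paragraph: contracting the Einstein equation \eqref{eq:motionHetsugra3d1} with $\mathrm{Ric}^g$ does produce an identity for $\mathrm{Tr}_g((\mathrm{Ric}^g)^3)$, but differentiating it creates the terms $\dd(\mathrm{Ric}^g(\dd f,\dd f))$ and $\dd\langle\nabla^g\varphi,\mathrm{Ric}^g\rangle_g$, which involve the full covariant derivative $\nabla^g\mathrm{Ric}^g$ and third derivatives of $f$. The contracted Bianchi identity only controls the divergence $\nabla^{g\ast}\mathrm{Ric}^g$, and the dilaton equation only controls a trace of the Hessian of $f$; nothing in your list of substitutions eliminates the remaining components of $\nabla^g\mathrm{Ric}^g$, so the assertion that one can ``cancel a factor of $\dd f$'' and conclude $\dd\det(\mathrm{Ric}^g)=0$ is a hope, not a derivation. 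The second gap is the endgame: even if the principal Ricci curvatures were constant with a repeated eigenvalue, the algebra of Proposition \ref{prop:fconstant} that ``pins down the spectrum'' was derived under the hypothesis of \emph{constant} dilaton, whereas here $\dd f\neq 0$ and the Einstein equation retains the terms $\frac{\kappa}{4}\dd f\otimes\dd f$, $\frac{\kappa}{2}[\ast_g\dd f,\mathrm{Ric}^g]$ and $\nabla^g\varphi$, so that analysis does not apply. Worse, the model geometries that arise from that algebra ($\widetilde{\mathrm{Sl}}(2,\mathbb{R})$, $\mathrm{E}(1,1)$, the Heisenberg group) admit many compact quotients that are not spheres, and you give no argument why a nowhere-vanishing non-constant $f$ would force simple connectivity; so even granting everything before it, the conclusion $M\cong S^3$ does not follow from your setup.

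The paper's proof is far shorter and rests on an idea absent from your proposal: evaluate the Einstein and dilaton equations at an arbitrary critical point $m$ of $f$. There $\dd f=0$ kills every first-order term, $\nabla^g\varphi=\mathrm{Hes}(\log f)$, and taking traces yields a quadratic equation for the critical value $f_c^2$ whose coefficients are constants precisely because $s_g$ and $\vert\mathrm{Ric}^g\vert_g^2$ are assumed constant (and whose leading coefficient $\tfrac{3\kappa}{8}$ is nonzero). Hence $f$ admits at most two critical values; the paper then concludes that a non-constant $f$ on a compact manifold has exactly two critical points, and the Reeb-type theorems of Milnor and Rosen give that $M$ is a sphere. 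If you want to keep your general outlook, the lesson is to use the hypotheses $\dd s_g=0$ and $\dd\vert\mathrm{Ric}^g\vert_g^2=0$ to constrain the \emph{values of $f$ at its critical points} rather than to try to upgrade them to constancy of the full Ricci spectrum.
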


\begin{proof}
Suppose $(g,f)$ is a Heterotic soliton with non-constant dilaton $f$. We evaluate Equations \eqref{eq:motionHetsugra3d1} and \eqref{eq:motionHetsugra3d2} at a critical point $m\in M$ of $f$ whose critical value we denote by $f_c$. We obtain:
\begin{eqnarray*}
& - \mathrm{Hes}(\log f)_m = -  \kappa \, \mathrm{Ric}^g_m \circ_g \mathrm{Ric}^g_m + (1 + \kappa (s_g)_m - \frac{\kappa}{2} f^2_c  )\mathrm{Ric}^{g}_m  + \left ( \kappa  \vert \mathrm{Ric}^g_m \vert_g^2-\frac{\kappa}{2} (s^2_g)_m- \frac{1}{2} f^2_c+\frac{\kappa}{8}f^4_c\right)  g\, , \\
& \mathrm{Tr}_g(\mathrm{Hes}(\log f)_m) = -\frac{(s_g)_m}{3}  -\frac{1}{6} f^2_c\, .
\end{eqnarray*}

\noindent
Taking the trace of the first equation and combining it with the second we obtain a quadratic equation for $f^2_c$ whose coefficients are constant due to the assumption $\dd s_g = 0$ and $\dd \vert\mathrm{Ric}^g\vert_g^2 = 0$. Hence, the function $f$ can have at most two critical values, implying that it is either constant, which is not allowed, or has exactly two critical points. The latter implies that $M$ is diffeomorphic to the sphere \cite{MilnorII,Rosen}. 
\end{proof}

\noindent
We note that the existence of three-dimensional Heterotic solitons with non-constant dilaton is currently an open problem.


\subsection{Classification of three-dimensional strong Heterotic solitons}
\label{sec:3dstrongsolitons}


In this subsection, we consider strong three-dimensional Heterotic solitons and provide their complete classification.

\begin{lemma}
\label{lemma:strongfconstant}
A strong three-dimensional Heterotic soliton $(g,f)\in \Sol_{\kappa}(M)$ has constant dilaton $f$.
\end{lemma}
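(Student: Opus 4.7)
My strategy is to extract, from the $\wedge^3 M$-component of the strong condition (equation \eqref{eq:Lambda3strong}), a scalar Laplacian identity for $f$ that, combined with compactness of $M$, forces $f$ to be constant.

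If $f$ vanishes identically there is nothing to prove; otherwise, by Proposition \ref{prop:f0trivial} and Lemma \ref{lemma:cases}, $f$ is nowhere zero and $\varphi = \dd f/f$. I then specialise equation \eqref{eq:Lambda3strong} to dimension three, writing $H = f\,\nu_g$. Several structural facts collapse most of the ten terms. First, $\dd H$ is a $4$-form on a $3$-manifold, hence identically zero, so the three terms involving $\dd H$ vanish. Second, the parallelism $\nabla^g \nu_g = 0$ gives $\nabla^g_v H = v(f)\,\nu_g$, from which $\nabla^{g\ast}\nabla^g H = (\Delta_g f)\,\nu_g$ with $\Delta_g = \delta^g\dd$ the Laplacian on functions, and $\nabla^g_\varphi H = g(\dd f,\varphi)\,\nu_g$. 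Third, every $H_u$ is a skew endomorphism of $TM$, so its derivation extension annihilates the volume form, $H_u(\nu_g) = 0$; this kills the term $\tfrac12 H_{e_j}(\nabla^g_{e_j}H)$. Fourth, in an orthonormal frame each remaining ``wedge'' term has the shape $\alpha^{jk}\, e^k \wedge (e_j \lrcorner \nu_g)$ with $\alpha^{jk}$ containing an antisymmetric $\epsilon$-symbol; the three-dimensional identity $e^k \wedge (e_j \lrcorner \nu_g) = \delta_{kj}\,\nu_g$ then contracts the $\epsilon$ against a symmetric Kronecker, yielding zero.

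What survives of \eqref{eq:Lambda3strong} is therefore only $(\Delta_g f)\,\nu_g + g(\dd f,\varphi)\,\nu_g = 0$. Substituting $\varphi = \dd f/f$ and multiplying by $f$ produces the scalar equation $f\,\Delta_g f + |\dd f|_g^2 = 0$. Integration by parts on the compact manifold $M$ gives $\int_M f\,\Delta_g f\,\nu_g = \int_M |\dd f|_g^2\,\nu_g$, so the identity rearranges to $2\int_M |\dd f|_g^2\,\nu_g = 0$. Hence $\dd f = 0$ and $f$ is constant.

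The main obstacle is the case-by-case bookkeeping needed to verify the vanishing of the seven ``collapsing'' terms of \eqref{eq:Lambda3strong}, although each individual computation is elementary. The conceptual content is that in dimension three, the parallelism, top-degree position, and skew-invariance of $\nu_g$ conspire to reduce a rather opaque overdetermined constraint to a clean harmonic-type identity for the dilaton.
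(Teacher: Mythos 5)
Your proof is correct and takes essentially the same route as the paper: plug $H=f\nu_g$ into the skew-symmetric projection \eqref{eq:Lambda3strong} of the strong condition, observe that everything collapses to $\nabla^{g\ast}\nabla^g H+\nabla^g_{\varphi}H=0$, i.e.\ $f\Delta_g f+\vert\dd f\vert_g^2=0$, and integrate over the compact manifold to conclude $\dd f=0$. Your term-by-term verification of the vanishing of the remaining terms (via $\dd H=0$ in dimension three, $H_u(\nu_g)=0$, and the contraction $e^k\wedge(e_j\lrcorner\nu_g)=\delta_{kj}\nu_g$ against antisymmetric coefficients) simply makes explicit the ``simplifying'' step that the paper leaves to the reader.
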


\begin{proof}
Every strong Heterotic soliton satisfies by definition the strong condition \eqref{eq:strongcondition} and therefore also satisfies its skew-symmetric projection \eqref{eq:Lambda3strong}. Plugging $H= f\nu_g$ into equation \eqref{eq:Lambda3strong} and simplifying we obtain that it is equivalent to:
\begin{equation*}
\nabla^{g\ast} \nabla^g H + \nabla^g_{\varphi} H =0\, .
\end{equation*}

\noindent
Setting now $H = f \nu_g$ in the previous equation we obtain that it is equivalent to:
\begin{equation*}
f\Delta_g f + \vert\dd f\vert_g^2 = 0.
\end{equation*}

\noindent
Integrating over $M$, this implies $\dd f=0$.
\end{proof}
 
\noindent
The previous lemma reduces the study of three-dimensional strong Heterotic solitons to the study of the strong condition given at Equation \eqref{eq:strongcondition} in the three possible classes of three-dimensional Heterotic solitons listed in Proposition \ref{prop:fconstant}. Since $f$ is a non-zero constant, the system  \eqref{eq:motionHetsugra3d1}--\eqref{eq:motionHetsugra3d2} is equivalent to \eqref{eq:e3d} and the strong equation \eqref{eq:strongcondition} becomes:
\begin{equation}\label{eq:strong2}
\nabla^{g,H\ast}\mR^{g,H}=0\, .
\end{equation}
We easily compute $\mR^{g,H}_{v_1,v_2}=\mR^g+\frac14f^2v_1\wedge v_2$ for every tangent vectors $v_1,v_2$, therefore using the Bianchi identity $\nabla^{g\ast}\mR^{g}=\dd^{\nabla^g}\mathrm{Ric}^g$ we obtain after a straightforward computation that \eqref{eq:strong2} is equivalent to: 
\begin{equation}\label{eq:strong3}
  \dd^{\nabla^g}\mathrm{Ric}^g(v)+\frac{3f}2*(\mathrm{Ric}^g_0(v))  =0,\qquad \forall v\in TM\, ,
\end{equation}
where  $\mathrm{Ric}^g_0$ is the traceless Ricci tensor. If $(M,g)$ is Einstein, this equation is clearly satisfied, so the solitons in case (3) of Proposition \ref{prop:fconstant} satisfy automatically the strong condition. If $(g,f)$ is of type (1) or (2) of Proposition \ref{prop:fconstant}, the Ricci tensor has one double eigenvalue $\mu_1$ and one simple eigenvalue $\mu_3$. Let $\xi$ denote a unit vector field on $M$ satisfying $\mathrm{Ric}^g(\xi)=\mu_3\xi$ (here we might have to replace $M$ with a double cover in order for $\xi$ to be globally defined). We write: 
$$\mathrm{Ric}^g=\mu_1 g+(\mu_3-\mu_1)\xi\otimes\xi,\qquad \mathrm{Ric}^g_0=(\mu_3-\mu_1)\left(-\frac13g+\xi\otimes\xi\right),$$
so $\dd^{\nabla^g}\mathrm{Ric}^g=(\mu_3-\mu_1)(\dd\xi\otimes\xi+(e_i\wedge\xi)\otimes\nabla^g_{e_i}\xi)$. The strong equation \eqref{eq:strong3} becomes: 
$$g(v,\xi)\,\dd\xi +g(\nabla^g_{e_i}\xi,v)\,e_i\wedge\xi +\frac{3f}2*\left(-\frac13v+g(v,\xi)\,\xi \right)=0,\qquad\forall v\in TM\, .$$
For $v=\xi$ this gives $\dd\xi=-f*\xi$. Reinjecting in the above equation we obtain: 
$$g(\nabla^g_{e_i}\xi,v)\,e_i\wedge\xi =\frac{f}2*\left(v-g(v,\xi)\,\xi \right),\qquad\forall v\in TM\, .$$
Taking the interior product with $\xi$ in this equation and using again $\dd\xi=-f*\xi$ yields:
$$\frac f2*(v\wedge\xi)=g(\nabla^g_\xi\xi,v)\xi-g(\nabla^g_{e_i}\xi,v)\,e_i=\dd\xi(\xi,v) \xi-g(\nabla^g_v\xi,e_i)\,e_i-\dd\xi(e_i,v)\,e_i=-\nabla^g_v\xi-f*(\xi\wedge v)\, ,$$
showing that $\nabla_v\xi=-\frac12f*(\xi\wedge v)$ for every $v\in TM$. In particular, $\xi$ is Killing. The Bochner formula gives $2\mathrm{Ric}^g(\xi) =f^2\xi$ whence $2\mu_3=f^2=-2s_g=-2(2\mu_1+\mu_3)$, that is, $\mu_1+\mu_3=0$. We are thus in case (1) of Proposition \ref{prop:fconstant}. Reversing the order of the above calculations shows that, conversely, if the Ricci tensor of $(M,g)$ satisfies the condition (1) in Proposition \ref{prop:fconstant} and $\xi$ is a unit length vector field on $M$ satisfying $\nabla\xi=-\frac12f*\xi$, then $(M,g,f)$ is a strong three-dimensional soliton. It remains to characterize the manifolds satisfying these conditions.

\begin{proposition}
    Let $(M,g)$ be a complete simply connected three-dimensional Riemannian manifold such that there exists a positive constant $f$ and a unit vector field $\xi$ with $\nabla^g\xi=-\frac12f*\xi$ and $\mathrm{Ric}^g=\frac{1}2f^2(-g+2\xi\otimes\xi)$. Then $(M,g)$ is isometric to the Heisenberg group $\mathrm{H}_3$, endowed with a left-invariant metric. Conversely, for every left-invariant metric on $\mathrm{H}_3$ there exist $f$ and $\xi$ with the above properties.
\end{proposition}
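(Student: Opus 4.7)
My plan is to use Cartan's moving-frame method: first build a global orthonormal coframe $(\theta^1,\theta^2,\alpha)$ on $M$ whose exterior derivatives satisfy the Maurer-Cartan equations of the Heisenberg Lie algebra $\mathfrak{h}_3$, and then invoke Cartan's development theorem to upgrade this into a global isometry $M\to \mathrm{H}_3$. The two ingredients controlling the construction are the hypothesis on $\nabla\xi$, which fixes the connection $1$-forms $\omega^1_3$ and $\omega^2_3$ attached to any orthonormal frame $(e_1,e_2,\xi)$, and the Ricci hypothesis, which via the second structure equation pins down $\dd\omega^1_2$ up to a closed correction that can be integrated out on a simply connected manifold.

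First I would verify that $M$ admits a global orthonormal frame $(e_1,e_2,\xi)$: simple connectedness implies orientability, every orientable $3$-manifold is parallelizable, and the oriented rank-$2$ bundle $\xi^\perp$ is stably trivial because $\xi^\perp\oplus\mathbb{R}\xi\cong TM$ is trivial, hence trivial. Write $\alpha:=\xi^\flat$, let $(\theta^1,\theta^2,\alpha)$ be the dual coframe and $\omega^i_j$ the Levi-Civita connection forms. The hypothesis $\nabla_v\xi=-\tfrac{f}{2}{*}(\xi\wedge v)$ translates at once into $\omega^1_3=\tfrac{f}{2}\theta^2$ and $\omega^2_3=-\tfrac{f}{2}\theta^1$, whence Cartan's first structure equation yields $\dd\alpha=-f\,\theta^1\wedge\theta^2$. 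Since in three dimensions the Riemann tensor is determined by the Ricci tensor, the Ricci hypothesis fixes the sectional curvatures as $K_{12}=-\tfrac{3f^2}{4}$ and $K_{13}=K_{23}=\tfrac{f^2}{4}$, so the curvature $2$-form satisfies $\Omega^1_2=-\tfrac{3f^2}{4}\theta^1\wedge\theta^2$; plugging this together with $\omega^1_3\wedge\omega^3_2=-\tfrac{f^2}{4}\theta^1\wedge\theta^2$ into the second structure equation $\dd\omega^1_2=\Omega^1_2-\omega^1_3\wedge\omega^3_2$ gives $\dd\omega^1_2=-\tfrac{f^2}{2}\theta^1\wedge\theta^2$. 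Combined with $\dd\alpha=-f\theta^1\wedge\theta^2$, this shows that the $1$-form $\omega^1_2-\tfrac{f}{2}\alpha$ is closed.

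Now I use simple connectedness to write $\omega^1_2-\tfrac{f}{2}\alpha=\dd\phi$ for some $\phi\in C^\infty(M)$ and rotate $(e_1,e_2)$ by the angle $\phi$: a direct check shows that under this $SO(2)$-rotation $\omega^1_2$ shifts by $-\dd\phi$ while $\omega^1_3$ and $\omega^2_3$ retain the form $\tfrac{f}{2}\tilde\theta^2$ and $-\tfrac{f}{2}\tilde\theta^1$. In the rotated coframe $(\tilde\theta^1,\tilde\theta^2,\alpha)$ Cartan's first structure equation then yields $\dd\tilde\theta^1=\dd\tilde\theta^2=0$ and $\dd\alpha=-f\,\tilde\theta^1\wedge\tilde\theta^2$, which are precisely the Maurer-Cartan equations of $\mathfrak{h}_3$ with structure constant $f$. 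By Cartan's theorem on Maurer-Cartan forms applied to the simply connected $M$, there is a smooth map $\Phi:M\to\mathrm{H}_3$ (unique up to left translation) pulling back the Maurer-Cartan form of $\mathrm{H}_3$ to our coframe; it is a local isometry onto the left-invariant metric on $\mathrm{H}_3$ making the Maurer-Cartan frame orthonormal, a Riemannian covering by completeness of $g$, and hence a global isometry since $\pi_1(\mathrm{H}_3)=1$.

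For the converse, given a left-invariant metric on $\mathrm{H}_3$, let $Z$ generate the center of $\mathfrak{h}_3$ and set $E_3:=Z/|Z|_g$. Any basis $X,Y$ of a complement to $\mathbb{R}Z$ can be shifted by elements of $\mathbb{R}Z$ (which preserves $[X,Y]$) to lie in $E_3^\perp$, and then orthonormalized in their plane; this yields an orthonormal triple $(E_1,E_2,E_3)$ of left-invariant vector fields with $[E_1,E_3]=[E_2,E_3]=0$ and $[E_1,E_2]=fE_3$ for some $f>0$. Koszul's formula then gives $\nabla E_3=-\tfrac{f}{2}(E_3\times\cdot)$, and a direct curvature computation returns the stated Ricci tensor. \emph{The hard part} will be carefully promoting the local development $\Phi$ built in the direct implication to a global diffeomorphism; this requires combining completeness of $g$ (so that $\Phi$ is a Riemannian covering) with simple connectedness of the target $\mathrm{H}_3$, a delicate but standard argument.
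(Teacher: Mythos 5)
Your proof is correct in substance but follows a genuinely different route from the paper's. The paper introduces the metric connection with torsion $\bar\nabla_v := \nabla^g_v - \tfrac12 f*v + fg(v,\xi)*\xi$, checks that $\xi$ is $\bar\nabla$-parallel and that $\bar\nabla$ is flat, and then reads off the Heisenberg commutation relations from a global $\bar\nabla$-parallel orthonormal frame, which exists automatically by flatness plus simple connectedness. You instead work with moving frames: the hypothesis on $\nabla^g\xi$ pins down $\omega^1_3,\omega^2_3$ and hence $\dd\alpha$; the Ricci hypothesis, via the three-dimensional determination of the full curvature tensor by the Ricci tensor, pins down $\dd\omega^1_2$; the closed one-form $\omega^1_2-\tfrac f2\alpha$ is integrated away by an $SO(2)$-rotation of the frame; and the resulting coframe satisfies the Maurer--Cartan equations of $\mathfrak{h}_3$, after which Cartan's theorem together with the standard completeness/covering argument finishes. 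I checked your structure-equation computations ($\omega^1_3=\tfrac f2\theta^2$, $\omega^2_3=-\tfrac f2\theta^1$, $\dd\alpha=-f\theta^1\wedge\theta^2$, $K_{12}=-\tfrac{3f^2}4$, $K_{13}=K_{23}=\tfrac{f^2}4$, $\dd\omega^1_2=-\tfrac{f^2}2\theta^1\wedge\theta^2$) and they are right. Your version has the merit of making explicit where each hypothesis enters and of replacing the paper's ``straightforward but tedious'' flatness verification by the closedness of a single one-form; the paper's version has the merit of producing the global frame for free. The converse direction is essentially identical in both treatments.

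The one step that does not hold as stated is the triviality of $\xi^\perp$. A stably trivial oriented rank-two bundle need not be trivial: its Euler class need only be even, and the pull-back of $TS^2$ to the simply connected three-manifold $S^2\times\RR$ is stably trivial with non-zero Euler class. So ``$\xi^\perp\oplus\RR\xi\cong TM$ is trivial, hence $\xi^\perp$ is stably trivial, hence trivial'' is not a valid deduction, and you do need the global coframe before the Maurer--Cartan argument can be set up. This is fixable within your framework: the induced $SO(2)$-connection on $\xi^\perp$ has globally defined curvature $\Omega^1_2 = -\tfrac{3f^2}{4}\,\theta^1\wedge\theta^2=\tfrac{3f}{4}\,\dd\alpha$, which is exact, so the real Euler class vanishes; since $H^2(M;\ZZ)$ is torsion-free when $\pi_1(M)=1$, the integral Euler class vanishes and $\xi^\perp$ is trivial. (Alternatively, run the development argument on the $SO(2)$-reduction of the frame bundle, or adopt the paper's flat-connection shortcut to manufacture the frame.) With this repair the proof is complete.
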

\begin{proof}
    Consider the metric connection with torsion on $(M,g)$ defined by
    $$\bar\nabla_v:=\nabla^g_v-\frac12f*v+fg(v,\xi)*\xi\, .$$
    We have 
    $$\bar\nabla_v\xi:=\nabla^g_v\xi-\frac12f*v(\xi)=-\frac12f*\xi(v)-\frac12f*v(\xi)=0\, ,$$
    so $\xi$ is $\bar\nabla$-parallel. A straightforward but tedious computation then shows that $\bar\nabla$ is flat. Consequently, there exists a $\bar\nabla$-parallel oriented orthonormal frame $\xi,v,w$ on $TM$. We have:
    $$0=\bar\nabla_vw-\bar\nabla_wv=[v,w]-f*(v\wedge w)=[v,w]-f\xi\, ,$$
    $$0=\bar\nabla_v\xi-\bar\nabla_\xi v=[v,\xi]-\frac12f*v(\xi)+\frac12f*\xi(v)-f*\xi(v)=[v,\xi]\, ,$$
    and similarly $[w,\xi]=0$. This shows that the frame $(e_1,e_2,e_3):=(f\xi,v,w)$ satisfies the commutator relations 
    $$[e_1,e_3]=[e_2,e_3]=0,\ [e_2,e_3]=e_1$$
    of the Heisenberg Lie algebra $\mathfrak{h}_3$, and thus $(M,g)$ is isometric to $\mathrm{H}_3$ endowed with the left-invariant metric determined by the scalar product $f^2e^1\otimes e^1+e^2\otimes e^2+e^3\otimes e^3$.

    Conversely, if $f$ is a positive constant and $(M,g)$ is the Heisenberg group endowed with the above left-invariant metric, the standard formulas of Riemannian Lie groups show that the unit left-invariant vector field $\xi$ induced by $\frac1fe_1$ satisfies $\nabla^g\xi=-\frac12f*\xi$ and that $\mathrm{Ric}^g=\frac{1}2f^2(-g+2\xi\otimes\xi)$.
\end{proof}

\noindent
Altogether, the previous discussion results in the following classification of compact strong Heterotic solitons in three dimensions.

\begin{theorem}
\label{thm:strongHeterotic3d}
Every compact three-dimensional strong Heterotic soliton has constant dilaton and is either isometric to a compact quotient of the Heisenberg group equipped with a left-invariant metric or homothetic to a compact hyperbolic three-manifold.  
\end{theorem}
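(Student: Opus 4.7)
The plan is to combine the three building blocks established earlier in the subsection. First I would invoke Lemma \ref{lemma:strongfconstant} to reduce to the case of constant dilaton, and then feed this into Proposition \ref{prop:fconstant} to obtain the three possible algebraic types for the Ricci tensor. From there, the remaining task is to check which of these three types are actually compatible with the strong condition \eqref{eq:strongcondition}, rewritten as \eqref{eq:strong3} after using the specific form $H = f\nu_g$ available in dimension three.

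Case (3) is immediate: the metric is Einstein, so $\mathrm{Ric}^g_0 \equiv 0$ and \eqref{eq:strong3} reduces to the second Bianchi identity $\dd^{\nabla^g}\mathrm{Ric}^g = 0$, which is automatic. Since in dimension three the Ricci tensor determines the full Riemann tensor, the metric has constant negative sectional curvature; compactness together with the standard classification of space forms then identifies $(M,g)$ up to homothety with a compact hyperbolic three-manifold.

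For cases (1) and (2), I would write $\mathrm{Ric}^g = \mu_1 g + (\mu_3 - \mu_1)\, \xi\otimes\xi$ in terms of a unit eigenvector field $\xi$ for the simple eigenvalue $\mu_3$, after passing to an orientable double cover if necessary. Plugging this decomposition into \eqref{eq:strong3} and evaluating first at $v = \xi$ produces $\dd\xi = -f \ast \xi$; reinjecting into the full equation yields the stronger pointwise identity $\nabla^g_v \xi = -\tfrac{1}{2} f \ast(\xi\wedge v)$, so $\xi$ is Killing. The Bochner formula then forces $\mathrm{Ric}^g(\xi) = \tfrac{1}{2} f^2 \xi$, i.e., $\mu_3 = f^2/2$. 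Combined with the scalar curvature relation $s_g = -f^2/2$ coming from \eqref{eq:e3d}, this imposes $\mu_1 + \mu_3 = 0$, compatible with case (1) but not case (2); this is what rules out the $\widetilde{\mathrm{Sl}}(2,\mathbb{R})$ and $\mathrm{E}(1,1)$ possibilities.

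To finish, I would identify the universal cover in the remaining case. The trick is to introduce the modified metric connection $\bar\nabla_v := \nabla^g_v - \tfrac{1}{2} f \ast v + f g(v,\xi) \ast \xi$ on $TM$. Under this connection $\xi$ is parallel, and a direct computation shows $\bar\nabla$ is flat. Taking a $\bar\nabla$-parallel oriented orthonormal frame containing $\xi$ and computing the Lie brackets of the remaining two vector fields recovers exactly the Heisenberg commutation relations, identifying the universal cover isometrically with $\mathrm{H}_3$ equipped with a suitable left-invariant metric; compactness of $M$ then forces it to be a quotient by a cocompact discrete subgroup. The main technical obstacle is the case analysis through \eqref{eq:strong3}, which requires careful contractions with both $\xi$ and arbitrary $v$ to extract the Killing and eigenvalue conditions; checking flatness of $\bar\nabla$ is the step where sign and normalization errors are easiest to make, and is where I would spend most of the care in the write-up.
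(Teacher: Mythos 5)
Your proposal follows the paper's proof essentially step for step: Lemma \ref{lemma:strongfconstant} to get constant dilaton, Proposition \ref{prop:fconstant} for the three Ricci types, the reduction of the strong condition to \eqref{eq:strong3}, the evaluation at $v=\xi$ and the Bochner argument forcing $\mu_1+\mu_3=0$ (eliminating case (2)), and the flat modified connection $\bar\nabla_v=\nabla^g_v-\tfrac12 f\ast v+fg(v,\xi)\ast\xi$ identifying the universal cover with the Heisenberg group. The only quibble is cosmetic: in case (3) the vanishing of $\dd^{\nabla^g}\mathrm{Ric}^g$ follows from $\mathrm{Ric}^g$ being a constant multiple of the parallel tensor $g$ (via Schur) rather than from the second Bianchi identity as such, but the conclusion is the same.
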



\section{Rigidity of Einstein Heterotic solitons}
\label{sec:rigidity}


All known Heterotic solitons on a compact three-manifold have constant principal Ricci curvatures and constant dilaton $f$. In this section, we study the local moduli of three-dimensional Heterotic solitons around an Einstein Heterotic soliton, which must be then either flat or of negative constant sectional curvature, in order to evaluate the possibility of deforming them to construct compact three-dimensional Heterotic solitons with non-constant dilaton. For definiteness, we will exclusively focus on the deformation of non-trivial Heterotic solitons with constant dilaton. The deformation problem of trivial Heterotic solitons can be easily studied by directly inspecting the Heterotic soliton system, from which the rigidity of trivial Heterotic solitons follows.

In three dimensions the Bianchi identity is automatically satisfied and, after solving the Maxwell equation as prescribed in Lemma \ref{lemma:cases} the maps introduced in \eqref{eq:eqsmap} reduce to:
\begin{eqnarray*}
\cE = (\cE_{\mathrm{E}} , \cE_{\mathrm{D}}) \colon \Conf(M) \to \Gamma(T^{\ast}M\otimes T^{\ast}M)\times \cC^{\infty}(M)
\end{eqnarray*}

\noindent
namely to the maps defined by the Einstein and dilaton equations. The diffeomorphism group $\Diff(M)$ acts naturally on $\Conf(M)$. This action is smooth in the Fréchet category when we consider $\Conf(M)$ as a Fréchet manifold and $\Diff(M)$ as a Fréchet-Lie group. Furthermore, the map $\cE$ is diffeomorphism-equivariant, namely:
\begin{equation*}
\cE (u^{\ast}g,f\circ u) =(u^{\ast}\cE_{\mathrm{E}}(g,f) , \cE_{\mathrm{D}}(g,f)\circ u)
\end{equation*}

\noindent
for every $(g,f)\in \Conf(M)$ and $u\in \Diff(M)$. Hence, the action maps Heterotic solitons to Heterotic solitons. The moduli space of three-dimensional Heterotic solitons on a closed three-manifold $M$ is then defined as follows:
\begin{equation*}
\mathfrak{M}(M) := \cE^{-1}(0)/\Diff(M)\, .
\end{equation*}

\noindent
It is equipped with the quotient topology of the subspace topology induced by the $\cC^{\infty}$ topology of $\Conf(M)$ corresponding to its Fréchet manifold structure. The main result that we need to understand the local structure of $\mathfrak{M}(M)$ is a \emph{slice theorem} for the action of $\Diff(M)$ on $\Conf(M)$.  


\subsection{Local slice in configuration space}
\label{sec:local slice}


The fact that we are studying Heterotic solitons in three dimensions together with the simplification given by Lemma \ref{lemma:cases}, implies that the configuration space of the system reduces in this case to that of  the classical Ricci solitons, namely pairs $(g,f)$ as described above, although the equations that define our system are remarkably more complicated than the standard Ricci soliton system. Furthermore, for us $f$ is a variable of the system whereas in the theory of Ricci solitons $f$ is usually considered as being completely determined as an eigenvalue of a Schrödinger type of operator associated to the system. The moduli space of standard Ricci solitons together with a slice theorem for the corresponding action of the diffeomorphism group is proved in \cite{PodestaSpiro} by considering Sobolev completions of the  spaces involved.  Here we adopt a direct approach and we consider the smooth right action $\Psi\colon \Conf(M)\times \Diff(M)\to \Conf(M)$ of $\Diff(M)$ on $\Conf(M)$ via pull-back. For every $(g,f)\in \Conf(M)$ define:
\begin{equation*}
\Psi_{g,f} \colon \Diff(M) \to \Conf(M)\, , \qquad u\mapsto (u^{\ast}g,f\circ u)
\end{equation*}

\noindent 
to be the orbit map associated with $(g,f)$. In particular, $\cO_{g,f} := \Im{\rm m}(\Psi_{g,f})$ is the orbit of the action $\Psi$ passing through $(g,f)$.  The differential of  $\Psi_{g,f} \colon \Diff(M) \to \Conf(M)$ at the identity $e\in \Diff(M)$ reads:
\begin{eqnarray*}
\dd_e\Psi_{g,f} \colon \mathfrak{X}(M) \to T_{g,f}\Conf(M) \, , \qquad v \mapsto \mathcal{L}_v g \oplus \dd f(v)
\end{eqnarray*}
\noindent
where the symbol $\mathcal{L}$ denotes the Lie derivative. Its $L^2$ adjoint is given by:
\begin{equation*}
\dd^{\ast}_e\Psi_{g,f} \colon T_{g,f}\Conf(M)  \to \mathfrak{X}(M) \, , \qquad h\oplus \sigma \mapsto 2(\nabla^{g\ast} h)^{\sharp_g} + \sigma \dd f^{\sharp_g}
\end{equation*}

\begin{lemma}
We have a $L^2$ orthogonal decomposition:
\begin{equation*}
T_{g,f}\Conf(M) = \Im{\rm m}(\dd_e\Psi_{g,f}) \oplus \Ker(\dd_e\Psi^{\ast}_{g,f})
\end{equation*}
\noindent
in terms of closed subspaces of $T_{g,f}\Conf(M)$.
\end{lemma}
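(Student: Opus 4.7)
The plan is to establish the decomposition via standard Hodge-theoretic techniques centered on the composition $P:=\dd^{\ast}_e\Psi_{g,f}\circ \dd_e\Psi_{g,f}\colon \mathfrak{X}(M)\to \mathfrak{X}(M)$, which I will argue is a second-order formally self-adjoint elliptic operator on vector fields. First I would compute the principal symbol of $\dd_e\Psi_{g,f}$: since $\dd f(v)$ is of zeroth order in $v$, at any $\xi\in T^{\ast}_mM\setminus\{0\}$ the principal symbol reduces to that of the Killing operator $v\mapsto \mathcal{L}_vg$, namely $v\mapsto \xi\otimes v^{\flat_g}+v^{\flat_g}\otimes \xi$. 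Injectivity of this symbol is immediate from contracting with $\xi$ and then with $v^{\flat_g}$, so $\dd_e\Psi_{g,f}$ is overdetermined elliptic, its formal adjoint has surjective principal symbol, and $P$ is elliptic.

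I would then invoke the standard theory of elliptic self-adjoint operators on a compact manifold to obtain that $\Ker(P)$ is finite-dimensional and that there is an $L^2$-orthogonal decomposition $\mathfrak{X}(M)=\Ker(P)\oplus \Im{\rm m}(P)$ in the smooth category. Pairing $P$ against $v$ yields $\langle Pv,v\rangle_{L^2}=\langle \dd_e\Psi_{g,f}(v),\dd_e\Psi_{g,f}(v)\rangle_{L^2}$, so the compactness of $M$ immediately gives $\Ker(P)=\Ker(\dd_e\Psi_{g,f})$.

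To conclude, given any $\alpha\in T_{g,f}\Conf(M)$, I would seek $v\in\mathfrak{X}(M)$ solving $Pv=\dd^{\ast}_e\Psi_{g,f}(\alpha)$. The Fredholm alternative for the elliptic self-adjoint operator $P$ reduces this to the requirement that the right-hand side be $L^2$-orthogonal to $\Ker(P)$, which holds because for every $w\in\Ker(P)=\Ker(\dd_e\Psi_{g,f})$ one has
$$\langle \dd^{\ast}_e\Psi_{g,f}(\alpha),w\rangle_{L^2}=\langle \alpha,\dd_e\Psi_{g,f}(w)\rangle_{L^2}=0\, .$$
Setting $\beta:=\alpha-\dd_e\Psi_{g,f}(v)$ then yields $\dd^{\ast}_e\Psi_{g,f}(\beta)=0$, giving the required decomposition. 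The $L^2$-orthogonality of the two summands is immediate from adjointness, and their closedness in the Fr\'echet topology of $T_{g,f}\Conf(M)$ follows from the continuity of $\dd^{\ast}_e\Psi_{g,f}$ combined with the decomposition itself.

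The main technical point I anticipate is the passage from the Sobolev formulation of the Hodge decomposition to the Fr\'echet decomposition for smooth sections stated in the lemma. This is a standard consequence of elliptic regularity, but in the literature it is often handled by first working in Sobolev completions (as in the approach of \cite{PodestaSpiro}) and then recovering the smooth statement \emph{a posteriori}; I would either follow this strategy or directly invoke the existence of a smooth Green operator for the elliptic self-adjoint operator $P$.
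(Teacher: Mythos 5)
Your proposal is correct and follows essentially the same route as the paper, which simply observes that the principal symbol of $\dd_e\Psi_{g,f}$ is injective and invokes the resulting standard (Berger--Ebin type) splitting; you have merely unpacked that one-line argument into its standard proof via the elliptic self-adjoint operator $P=\dd^{\ast}_e\Psi_{g,f}\circ\dd_e\Psi_{g,f}$ and the Fredholm alternative. The symbol computation, the identification $\Ker(P)=\Ker(\dd_e\Psi_{g,f})$, and the closedness discussion are all accurate.
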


\begin{proof}
The result follows from the fact that the symbol of $\dd_e\Psi_{g,f} \colon \mathfrak{X}(M) \to T_{g,f}\Conf(M) $ is injective.
\end{proof}

\noindent
Therefore, if $(g,f)$ is a three-dimensional Einstein Heterotic soliton with constant dilaton, then the projection of $\dd_e\Psi_{g,f} \colon \mathfrak{X}(M) \to T_{g,f}\Conf(M)$ to $C^{\infty}(M)$ is zero and the slice for the action of $\Diff(M)$  around $(g,f)$ can be characterized as follows.

\begin{proposition}
Let $(g,f)$ be a three-dimensional Einstein Heterotic soliton with constant dilaton. Any smooth submanifold  $\cS_{g.f}\subset\Conf(M)$ of the form:
\begin{equation*}
\cS_{g,f} = \cS_g\times C^{\infty}(M)
\end{equation*}
\noindent
where $\cS_g\subset \Met(M)$ is a slice for the action of $\Diff(M)$ on $\Met(M)$ is a slice for the action of $\Diff(M)$ on $\Conf(M)$. In particular:
\begin{equation*}
T_{g,f}\cS_{g,f} = \Ker(\nabla^{g\ast}\colon \Gamma(T^{\ast}M\odot T^{\ast}M)\to \Omega^1(M))\oplus C^{\infty}(M)
\end{equation*}
\noindent
is the tangent bundle of $\cS_{g,f}$ at $(g,f)$.
\end{proposition}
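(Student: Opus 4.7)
The plan is to reduce the statement directly to Ebin's slice theorem for the action of $\Diff(M)$ on $\Met(M)$. A submanifold $\cS_{g,f}\subset\Conf(M)$ is a slice at $(g,f)$ precisely when it is invariant under $\mathrm{Stab}(g,f)\subset\Diff(M)$ and the tube map $\Diff(M)\times_{\mathrm{Stab}(g,f)}\cS_{g,f}\to\Conf(M)$ is a local diffeomorphism onto a $\Diff(M)$-invariant neighborhood of $(g,f)$.

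The crucial simplification is that $f$ is constant, so $\dd f=0$ and therefore
\[
\dd_e\Psi_{g,f}(v)=\mathcal{L}_v g\oplus \dd f(v)=\mathcal{L}_v g\oplus 0
\]
for all $v\in\mathfrak{X}(M)$. Consequently $\Im{\rm m}(\dd_e\Psi_{g,f})=\Im{\rm m}(\dd_e\Psi_g^{\Met})\oplus 0$, where $\Psi_g^{\Met}\colon\Diff(M)\to\Met(M)$ is the standard orbit map on metrics, and $\mathrm{Stab}(g,f)=\mathrm{Isom}(M,g)$, since any diffeomorphism preserves a constant function. The dilaton direction therefore decouples entirely from the diffeomorphism action, both infinitesimally and at the group level.

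I would then invoke Ebin's slice theorem, which furnishes $\cS_g\subset\Met(M)$ with $T_g\cS_g=\Ker(\nabla^{g\ast}\colon \Gamma(T^{\ast}M\odot T^{\ast}M)\to\Omega^1(M))$ as a closed complement of $\Im{\rm m}(\dd_e\Psi_g^{\Met})$. Taking the Cartesian product with $C^{\infty}(M)$ yields the splitting
\[
T_{(g,f)}\Conf(M)=\Im{\rm m}(\dd_e\Psi_{g,f})\,\oplus\,\big(\Ker(\nabla^{g\ast})\oplus C^{\infty}(M)\big),
\]
which identifies the tangent space of $\cS_{g,f}$ at $(g,f)$ as claimed. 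The slice property itself is then inherited from Ebin's: given a configuration $(g',f')$ close to $(g,f)$, Ebin's theorem produces a pair $(h,u)\in\cS_g\times\Diff(M)$ with $u^{\ast}h=g'$, unique modulo the diagonal $\mathrm{Stab}(g)$-action, and then $\sigma:=f'\circ u^{-1}\in C^{\infty}(M)$ is uniquely determined so that $((h,\sigma),u)$ maps to $(g',f')$ by the tube map; smoothness of this local inverse is immediate from Ebin's.

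The main technical obstacle is the Fréchet-analytic framework, since the slice theorem does not hold verbatim for smooth Fréchet manifolds; this is handled as in Ebin's original work by passing through Sobolev completions and transferring the result back to the smooth category, a routine procedure that does not affect the algebraic core of the argument, namely the decoupling of the dilaton direction produced by $\dd f=0$.
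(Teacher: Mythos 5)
Your proposal is correct and follows essentially the same route as the paper: the whole content is that $\dd f=0$ makes the dilaton factor decouple from the $\Diff(M)$-action, so that an Ebin slice for $\Met(M)$ times $C^{\infty}(M)$ is a slice for $\Conf(M)$, with tangent space $\Ker(\nabla^{g\ast})\oplus C^{\infty}(M)$ matching $\Ker(\dd_e\Psi^{\ast}_{g,f})$. The only (inessential) divergence is technical: you propose passing through Sobolev completions as in Ebin's original argument, whereas the paper invokes the Fr\'echet-category slice theorem of Diez--Rudolph to avoid completions altogether.
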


\begin{remark}
Thanks to recent developments in the theory of infinite dimensional groups and actions \cite{DiezRudolph} we do not need to work in any Sobolev completion of $\Conf(M)$ and we can remain in the category of Fréchet spaces and Fréchet Lie groups of smooth sections.
\end{remark}


\subsection{Essential deformations}
\label{sec:essentialdeformations}


Let $(g,f)$ be a non-trivial three-dimensional Einstein Heterotic soliton with constant dilaton. Recall that in this case, the Heterotic soliton system implies:
\begin{equation*}
\mathrm{Ric}^g= - \frac{f^2}{6} g\, , \qquad \kappa f^2 = 3\, .
\end{equation*}
\noindent
We will tacitly use these equations in the following. The existence of a natural slice for the action of the diffeomorphism group around every point $(g,f)\in \Conf(M)$ immediately implies the following inclusion of vector spaces:
\begin{equation*}
T_{g,f}\mathfrak{M}(M) \subset \Ker(\dd_{g,f} \cE)\cap \Ker(\dd_e\Psi^{\ast}_{g,f})
\end{equation*}
\noindent
where $T_{g,f}\mathfrak{M}(M)$ the tangent space of $\mathfrak{M}(M)$ at the class $[g,f]\in \mathfrak{M}$ determined by $(g,f)$ and:
\begin{equation*}
\dd_{g,f} \cE \colon T_{g,f} \Conf(M) \to T_{g,f} \Conf(M)
\end{equation*}

\noindent
is the differential of $\cE$ at $(g,f)$. This motivates the following definition.

\begin{definition}
Let $(g,f)$ be a non-trivial three-dimensional Einstein Heterotic soliton with constant dilaton. The vector space $\mathbb{E}_{g,f}$ of \emph{essential deformations} of $(g,f)$ is:
\begin{eqnarray*}
\mathbb{E}_{g,f} = \Ker(\dd_{g,f} \cE)\cap \Ker(\dd_e\Psi^{\ast}_{g,f})
\end{eqnarray*}
\noindent
If $\mathbb{E}_{g,f} = 0$ then $(g,f)$ is rigid.
\end{definition}

\begin{remark}
By the existence of a slice around $(g,f)$, the previous notion of rigidity corresponds to the intuitive one, namely, if $(g,f)$ is rigid then there exists a neighborhood of $(g,f)$ in $\cE^{-1}(0)$ in which every element is diffeomorphic to $(g,f)$.
\end{remark}

\begin{lemma}
\label{lemma:laplacians}
Let $(g,f)$ be a non-trivial three-dimensional compact Einstein Heterotic soliton with constant dilaton. Then, the following equation holds:
\begin{equation*}
\Delta_g\mathrm{Tr}_g(h) = f^2 \mathrm{Tr}_g(h)\, , \qquad  \sigma = \frac{7f}{12}\mathrm{Tr}_g(h)
\end{equation*}
\noindent
for every $(h,\sigma)\in \mathbb{E}_{g,f}$.
\end{lemma}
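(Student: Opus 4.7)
The plan is to linearize the two scalar equations of the three-dimensional Heterotic soliton system at $(g,f)$ — the dilaton equation in \eqref{eq:motionHetsugra3d2} and the trace of the Einstein equation \eqref{eq:motionHetsugra3d1} — combine them, and then invoke compactness of $M$ to split the resulting fourth-order scalar relation.

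First, the slice condition $\nabla^{g\ast}h=0$ is equivalent to $\delta^g h=0$ and hence $\delta^g\delta^g h=0$. The standard first-variation formula
\begin{equation*}
\dot s_g(h)=\Delta_g(\mathrm{Tr}_g h)+\delta^g\delta^g h-\langle h,\mathrm{Ric}^g\rangle_g
\end{equation*}
then reduces at our Einstein point $\mathrm{Ric}^g=-\frac{f^2}{6}g$ to $\dot s_g=\Delta_g(\mathrm{Tr}_g h)+\frac{f^2}{6}\mathrm{Tr}_g h$. Together with the identity $\partial_t|\mathrm{Ric}^{g_t}|_{g_t}^2\big|_{t=0}=2\lambda\,\dot s_g$, which holds at any Einstein point $\mathrm{Ric}^g=\lambda g$ by a direct computation in which the two contributions at $t=0$ partially cancel, this allows every curvature scalar in \eqref{eq:motionHetsugra3d1}--\eqref{eq:motionHetsugra3d2} to be linearized purely in terms of $\dot s_g$, $\sigma$ and $\Delta_g\sigma$. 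The $\sigma$-dependence enters through the Maxwell equation $\varphi=\dd\log f$ of Lemma~\ref{lemma:cases}: since $f$ is constant and $\dot f=\sigma$, one has $\dot\varphi=\dd\sigma/f$ and $\partial_t(\delta^{g_t}\varphi_t)\big|_{t=0}=\frac{1}{f}\Delta_g\sigma$.

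Linearizing the scalar equation $s_g=3\delta^g\varphi+2|\varphi|_g^2-\frac{1}{2}f^2$ of \eqref{eq:motionHetsugra3d2} then yields
\begin{equation*}
\Delta_g(\mathrm{Tr}_g h)+\frac{f^2}{6}\mathrm{Tr}_g h-\frac{3}{f}\Delta_g\sigma+f\sigma=0,\qquad(\ast)
\end{equation*}
using that the first variation of $|\dd\log f|_g^2$ vanishes at $\log f_0=\mathrm{const}$. For the second equation I would introduce the scalar functional $T(g,f):=\mathrm{Tr}_g\cE_{\mathrm{E}}(g,f)$, which vanishes at $(g_0,f_0)$, and compute $\dot T$ directly from the trace of \eqref{eq:motionHetsugra3d1}; using $\kappa f^2=3$ (so that $1-\frac{2\kappa f^2}{3}=-1$ and $2\kappa f^3=6f$), the many curvature-squared and dilaton-squared terms collapse to $\dot T=-\dot s_g+3f\sigma-\frac{1}{f}\Delta_g\sigma=0$, which with the formula for $\dot s_g$ becomes
\begin{equation*}
\Delta_g(\mathrm{Tr}_g h)+\frac{f^2}{6}\mathrm{Tr}_g h=3f\sigma-\frac{1}{f}\Delta_g\sigma.\qquad(\ast\ast)
\end{equation*}
Equating the right-hand sides of $(\ast)$ and $(\ast\ast)$ gives $\frac{4}{f}\Delta_g\sigma=4f\sigma$, i.e.\ $\Delta_g\sigma=f^2\sigma$, and substituting back into $(\ast)$ leaves the single scalar relation
\begin{equation*}
\Delta_g(\mathrm{Tr}_g h)+\frac{f^2}{6}\mathrm{Tr}_g h=2f\sigma.\qquad(\mathrm{T})
\end{equation*}

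To decouple $(\mathrm{T})$ into the two statements of the lemma I would apply $\Delta_g$ to $(\mathrm{T})$ and substitute $\Delta_g\sigma=f^2\sigma$ to eliminate $\sigma$. The resulting fourth-order scalar equation for $\mathrm{Tr}_g h$ factors as
\begin{equation*}
(\Delta_g-f^2)\Bigl(\Delta_g+\tfrac{f^2}{6}\Bigr)\mathrm{Tr}_g h=0.
\end{equation*}
At this point compactness of $M$ is decisive: $\Delta_g$ is non-negative and self-adjoint on $C^\infty(M)$, so $\Delta_g+\frac{f^2}{6}$ has spectrum contained in $[\frac{f^2}{6},+\infty)$ and is therefore invertible. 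Cancelling this factor yields $\Delta_g\mathrm{Tr}_g h=f^2\mathrm{Tr}_g h$, and plugging this back into $(\mathrm{T})$ gives $\sigma=\frac{7f}{12}\mathrm{Tr}_g h$. The main technical obstacle will be the bookkeeping in the computation of $\dot T$: several curvature-squared and dilaton-squared contributions have to be linearized, and the clean collapse to $\dot T=-\dot s_g+3f\sigma-\frac{1}{f}\Delta_g\sigma$ relies crucially both on the identity $\partial_t|\mathrm{Ric}^{g_t}|_{g_t}^2\big|_{0}=2\lambda\dot s_g$ and on the coefficient cancellations forced by the resonance $\kappa f^2=3$.
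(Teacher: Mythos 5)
Your proposal is correct and follows essentially the same route as the paper: the same linearization formulas for $s_g$ and $\vert\mathrm{Ric}^g\vert_g^2$ at the Einstein point, the same combination of the linearized trace of the Einstein equation with the linearized dilaton equation to obtain $\Delta_g\sigma=f^2\sigma$ and then $\Delta_g(\mathrm{Tr}_g h)+\tfrac{f^2}{6}\mathrm{Tr}_g h=2f\sigma$, and the same appeal to the non-negativity of $\Delta_g$ on a compact manifold. The only (cosmetic) difference is the last step: the paper shows directly that $\mathrm{Tr}_g(h)-\tfrac{12}{7f}\sigma$ is a $\Delta_g$-eigenfunction with eigenvalue $-\tfrac{f^2}{6}$ and hence vanishes, whereas you eliminate $\sigma$ and factor the resulting fourth-order operator as $(\Delta_g-f^2)(\Delta_g+\tfrac{f^2}{6})$ before inverting the positive factor.
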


\begin{proof}
Let $(g,f)$ be a three-dimensional Einstein Heterotic soliton with a constant dilaton and fix a pair $(h,\sigma)\in \Ker(\dd_e\Psi^{\ast}_{g,f})$. We have:
\begin{equation*}
(\dd_g\mathrm{Ric})(h) = \frac{1}{2} \nabla^{g\ast}\nabla^{g}h - \frac{f^2}{4} h  + \frac{f^2}{12} \mathrm{Tr}_g(h) g - \frac{1}{2} \nabla^g\dd \mathrm{Tr}_g(h)\, , \quad (\dd_g s) (h) = \Delta_g \mathrm{Tr}_g(h) + \frac{f^2}{6} \mathrm{Tr}_g(h)
\end{equation*}

\noindent
which follow from the standard formulas for the linearization of the Ricci and scalar curvatures \cite{Besse} and Equation \eqref{eq:Riemann3d} in Appendix \ref{app:curvature3d}. Note that the latter implies in particular:
\begin{equation*}
\mR^g_{v_1,v_2} = \frac{f^2}{12} v_1\wedge v_2 \qquad v_1,v_2 \in TM\, .
\end{equation*}

\noindent
Using the previous formulae we compute:
\begin{eqnarray*}
& (\dd_g \vert \mathrm{Ric} \vert_g^2) (h) = - 2 g(h, \mathrm{Ric}^g\circ_g \mathrm{Ric}^g) + 2 g(\dd_g\mathrm{Ric}(h), \mathrm{Ric}^g) = -\frac{f^4}{18} \mathrm{Tr}_g(h)\\
& -\frac{f^2}{6} \mathrm{Tr}_g(\nabla^{g\ast}\nabla^{g}h - \frac{f^2}{2} h  + \frac{f^2}{6} \mathrm{Tr}_g(h) g - \nabla^g\dd \mathrm{Tr}_g(h)) = -\frac{f^2}{3} (\Delta_g \mathrm{Tr}_g(h) + \frac{f^2}{6}  \mathrm{Tr}_g(h))\, .
\end{eqnarray*}
\noindent
In particular:
\begin{equation*}
(\dd_g \vert \mathrm{Ric} \vert_g^2) (h) = -\frac{f^2}{3} (\dd_g s)(h)\, .
\end{equation*}

\noindent
Assuming now that $(h,\sigma)\in \mathbb{E}_{g,f}$ holds, the differential of the trace of the Einstein equation \eqref{eq:motionHetsugra3d1} at $(g,f)$ gives:
\begin{eqnarray*}
 & 2\kappa \, \dd_g\vert \mathrm{Ric}^g \vert_g^2(h) - \frac{3}{2} \dd_g s(h) +\frac{3}{2} f \sigma   -2 \dd_g s(h) + 3 (\frac{3}{2}f \sigma  +\frac{3}{2} \dd_g s(h)  - f \sigma)  -\frac{1}{f} \Delta_{g}\sigma\\
 & = 3 f \sigma -\dd_g s(h)  -\frac{1}{f} \Delta_{g}\sigma = 0\, .
\end{eqnarray*}

\noindent
On the other hand, the differential of equation \eqref{eq:motionHetsugra3d2} at $(g,f)$ immediately gives:
\begin{equation*}
\dd_g s(h) = \frac{3}{f}\Delta_g \sigma - f\sigma \, .
\end{equation*}
\noindent
Combining the previous two equations we obtain:
\begin{equation*}
\Delta_g \sigma = f^2 \sigma
\end{equation*}
\noindent
and a quick computation reveals that:
\begin{equation*}
\Delta_g (\mathrm{Tr}_g(h) - \frac{12}{7f} \sigma) = -\frac{f^2}{6} (\mathrm{Tr}_g(h) - \frac{12}{7f} \sigma)
\end{equation*}
\noindent
whence:
\begin{equation*}
\mathrm{Tr}_g(h) = \frac{12}{7f} \sigma
\end{equation*}
\noindent
and we conclude.
\end{proof}

\begin{remark}
By the previous lemma, if $(h,\sigma)\in \mathbb{E}_{g,f}$ then:
\begin{equation*}
(\dd_g s) (h) = \frac{7 f^2}{6} \mathrm{Tr}_g(h)\, , \qquad (\dd_g \vert \mathrm{Ric} \vert_g^2) (h) = -\frac{7 f^4}{18}\mathrm{Tr}_g(h)\, .
\end{equation*}
\noindent
We will tacitly use these identities in the following.
\end{remark}

\begin{lemma}
\label{lemma:nablaastnablah}
Let $(g,f)$ be a non-trivial three-dimensional compact Einstein Heterotic soliton with constant dilaton. Then, the following equations hold:  
\begin{equation*}
\nabla^{g\ast}\nabla^g h = \frac{f^2}{6} h + f^2 \mathrm{Tr}_g(h) g + \frac{13}{6} \nabla^g \dd \mathrm{Tr}_g(h)\, ,  \quad \Delta_g \mathrm{Tr}_g(h) = f^2 \mathrm{Tr}_g (h)
\end{equation*}

\noindent
for every $(h,\sigma)\in \mathbb{E}_{g,f}$.
\end{lemma}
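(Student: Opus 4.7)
\bigskip

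The plan is to linearize the Einstein equation \eqref{eq:motionHetsugra3d1} at the base $(g,f)$, combine the resulting expression for $\dd_g\mathrm{Ric}(h)$ with the Einstein formula already used in Lemma~\ref{lemma:laplacians}, and then substitute the algebraic relation $\sigma=\frac{7f}{12}\mathrm{Tr}_g(h)$ furnished by the same lemma. The second equation $\Delta_g\mathrm{Tr}_g(h)=f^2\mathrm{Tr}_g(h)$ is already contained in Lemma~\ref{lemma:laplacians}, so no further work is needed for it.

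For the first equation, I would proceed term by term using the background identities $\mathrm{Ric}^g=-\tfrac{f^2}{6}g$, $\kappa f^2=3$, $\dd f=0$, $\varphi=0$. With these, several simplifications occur: the bracket term $\tfrac{\kappa}{2}[\ast_g \dd f,\mathrm{Ric}^g]$ and the rank-one piece $\tfrac{\kappa}{4}\dd f\otimes\dd f$ vanish to first order because the base value of $\dd f$ is zero and $\mathrm{Ric}^g$ is proportional to $g$ (so it commutes with every endomorphism). For $\delta(\mathrm{Ric}^g\circ_g\mathrm{Ric}^g)$ I would use the endomorphism interpretation $(\mathrm{Ric}^g\circ_g\mathrm{Ric}^g)^\sharp=(\mathrm{Ric}^{g\sharp})^2$, picking up a $-g^{-1}hg^{-1}\mathrm{Ric}$ contribution from varying the musical isomorphism. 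The scalar coefficient $\kappa|\mathrm{Ric}|_g^2-\tfrac{\kappa}{2}s_g^2+\tfrac{\kappa}{8}f^4-\tfrac12 f^2$ is handled using the identity $(\dd_g|\mathrm{Ric}|_g^2)(h)=-\tfrac{f^2}{3}(\dd_g s)(h)$ established in Lemma~\ref{lemma:laplacians}. Finally, since the dilaton equation forces $\varphi=f^{-1}\dd f$ and $f$ is constant at the base, one has $\delta\varphi=f^{-1}\dd\sigma$ and hence $\delta(\nabla^g\varphi)=f^{-1}\nabla^g\dd\sigma$. Collecting all contributions I expect the linearized Einstein equation to collapse (using $\kappa f^2=3$ throughout) to the clean identity
\begin{equation*}
\dd_g\mathrm{Ric}(h)=-\tfrac{f^2}{6}\,h+f\sigma\,g+\tfrac{1}{f}\nabla^g\dd\sigma.
\end{equation*}

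Now I would plug this into the Einstein formula
\[
(\dd_g\mathrm{Ric})(h)=\tfrac12\nabla^{g\ast}\nabla^g h-\tfrac{f^2}{4}h+\tfrac{f^2}{12}\mathrm{Tr}_g(h)\,g-\tfrac12\nabla^g\dd\mathrm{Tr}_g(h),
\]
valid on the slice $\delta^g h=0$, which is where $\mathbb{E}_{g,f}$ lives by the discussion of Section~\ref{sec:local slice}. Solving for $\nabla^{g\ast}\nabla^g h$ yields
\[
\tfrac12\nabla^{g\ast}\nabla^g h=\tfrac{f^2}{12}h+\Bigl(f\sigma-\tfrac{f^2}{12}\mathrm{Tr}_g(h)\Bigr)g+\tfrac{1}{f}\nabla^g\dd\sigma+\tfrac12\nabla^g\dd\mathrm{Tr}_g(h).
\]
Substituting $\sigma=\tfrac{7f}{12}\mathrm{Tr}_g(h)$ from Lemma~\ref{lemma:laplacians}, the coefficient of $g$ becomes $\tfrac{f^2}{2}\mathrm{Tr}_g(h)$ and the two gradient-of-differential terms combine to $\tfrac{13}{12}\nabla^g\dd\mathrm{Tr}_g(h)$. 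Multiplying by two produces exactly the formula in the statement.

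The only delicate step is the linearization of the $\circ_g$ and $[\ast_g\dd f,\mathrm{Ric}^g]$ terms: one must remember that at the background $\dd f$ and $\varphi$ vanish, so any variation of a bilinear expression in which both factors vanish contributes nothing to first order, while single-factor variations survive. The main bookkeeping obstacle is to confirm that all $\kappa$-dependent contributions from $T_1$, $T_2$ and $T_3$ combine exactly to cancel the extra $-\delta\mathrm{Ric}$ pieces and produce the coefficients $-\tfrac{f^2}{6}$, $f\sigma$ and $\tfrac{1}{f}$ appearing in the reduced identity above; apart from that, the argument is purely algebraic once Lemma~\ref{lemma:laplacians} is in hand.
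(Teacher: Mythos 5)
Your proposal is correct and follows essentially the same route as the paper: linearize the Einstein equation at the background (where $\dd f=0$, $\varphi=0$, $\mathrm{Ric}^g=-\tfrac{f^2}{6}g$, $\kappa f^2=3$) to obtain $\dd_g\mathrm{Ric}(h)=-\tfrac{f^2}{6}h+f\sigma\, g+\tfrac1f\nabla^g\dd\sigma$, then substitute the Besse-type formula for $\dd_g\mathrm{Ric}(h)$ in the gauge $\nabla^{g\ast}h=0$ together with $\sigma=\tfrac{7f}{12}\mathrm{Tr}_g(h)$ from Lemma \ref{lemma:laplacians}. The intermediate identity you ``expect'' is exactly the one displayed in the paper's proof, and your coefficient bookkeeping ($\tfrac{f^2}{12}h$, $\tfrac{f^2}{2}\mathrm{Tr}_g(h)\,g$, $\tfrac{13}{12}\nabla^g\dd\mathrm{Tr}_g(h)$, then doubling) checks out.
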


\begin{proof}
Let $(g,f)$ be a non-trivial three-dimensional Einstein Heterotic soliton with constant dilaton. Differentiating equation \eqref{eq:motionHetsugra3d1} at $(g,f)$, manipulating and repeatedly using Lemma \ref{lemma:laplacians} we obtain:
\begin{eqnarray*}
-\dd_g\mathrm{Ric}(h) - \frac{f^2}{6}h + f\sigma g  + \frac{1}{f} \nabla^g \dd \sigma = -\frac{1}{2} \nabla^{g\ast}\nabla^{g}h + \frac{f^2}{12} h  + \frac{f^2}{2} \mathrm{Tr}_g(h) g  + \frac{13}{12} \nabla^g \dd\mathrm{Tr}_g(h)
\end{eqnarray*}
\noindent
and we conclude.
\end{proof}

\begin{theorem}
\label{thm:rigidity}
Let $(g,f)$ be a non-trivial three-dimensional compact Einstein Heterotic soliton with constant dilaton. Then $\mathbb{E}_{g,f}=0$.
\end{theorem}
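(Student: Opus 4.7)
Let $(h,\sigma)\in\mathbb{E}_{g,f}$. By Proposition \ref{prop:fconstant}, every non-trivial compact Einstein Heterotic soliton with constant dilaton has $\mathrm{Ric}^g=-\tfrac{f^2}{6}g$ and $\kappa f^2=3$, so $(M,g)$ is a compact hyperbolic three-manifold with constant sectional curvature $K=-f^2/12$; this constant-curvature property will be decisive, because every curvature contraction on $(M,g)$ collapses algebraically. My plan has two stages: first I reduce the problem to a transverse-traceless condition on $h$ by showing that $u:=\mathrm{Tr}_g h$ and $\sigma$ both vanish, and then I kill the remaining transverse-traceless $h$ by identifying it with an infinitesimal Einstein deformation and invoking Mostow--Prasad rigidity.

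For the first stage, I would apply the formal divergence $\nabla^{g*}$ to the identity of Lemma \ref{lemma:nablaastnablah}. The slice condition $\nabla^{g*}h=0$ (which is what $\Ker(\dd_e\Psi^{\ast}_{g,f})$ reduces to when $f$ is constant) kills the term $\tfrac{f^2}{6}\nabla^{g*}h$; the computation $\nabla^{g*}(ug)=-du$ is tautological; and for $\nabla^{g*}\nabla^g du$ the Weitzenb\"ock formula on one-forms combined with $\Delta_g u=f^2u$ from Lemma \ref{lemma:laplacians} gives $\tfrac{7f^2}{6}du$. On the left, $\nabla^{g*}(\nabla^{g*}\nabla^g h)$ is a triple covariant derivative which I would reorder using the Ricci identity; every contraction of $h$ against the constant-curvature tensor $R_{abcd}=K(g_{ac}g_{bd}-g_{ad}g_{bc})$ either vanishes because $\nabla^{g*}h=0$, or collapses to a scalar multiple of $du$. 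The two sides of the resulting identity differ by an explicit non-zero numerical multiple of $f^2\,du$, forcing $du=0$. Combined with $\Delta_gu=f^2u$ and the compactness of $M$, this gives $u=0$, and hence $\sigma=\tfrac{7f}{12}u=0$.

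With $u=0$, the identity of Lemma \ref{lemma:nablaastnablah} reduces to $\nabla^{g*}\nabla^g h=\tfrac{f^2}{6}h$ on a transverse-traceless tensor $h$. Reading the linearisation formula $(\dd_g\mathrm{Ric})(h)=\tfrac12\nabla^{g*}\nabla^g h-\tfrac{f^2}{4}h+\tfrac{f^2}{12}(\mathrm{Tr}_gh)g-\tfrac12\nabla^g d(\mathrm{Tr}_gh)$ recorded in the proof of Lemma \ref{lemma:laplacians} at $\mathrm{Tr}_gh=0$, this equation rewrites as $(\dd_g\mathrm{Ric})(h)=-\tfrac{f^2}{6}h$, i.e.\ $h$ is an infinitesimal Einstein deformation of $(M,g)$ preserving the Einstein constant, written in the transverse-traceless gauge. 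The closing move is to invoke Mostow--Prasad (equivalently Weil infinitesimal) rigidity: the moduli space of hyperbolic structures on a compact three-manifold is a single point, so the space of infinitesimal Einstein deformations modulo diffeomorphisms and rescalings is zero, and since the transverse-traceless gauge removes precisely those directions, $h=0$, whence $\mathbb{E}_{g,f}=0$.

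The hard part is this last step: no purely local Bochner/Weitzenb\"ock inequality will conclude, because pairing the equation with $h$ yields only the vacuous identity $\int_M|\nabla^g h|_g^2\,\nu_g=\tfrac{f^2}{6}\int_M|h|_g^2\,\nu_g$, and one genuinely needs topological input from hyperbolic rigidity. The triple-derivative commutator computation in the first stage is elementary but delicate: all curvature terms proportional to $h$ must cancel via $\nabla^{g*}h=0$, and only the constant-curvature form of the Riemann tensor guarantees that the residual coefficient of $du$ is non-zero, which is exactly what allows the argument to close.
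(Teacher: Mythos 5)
Your first stage is essentially the paper's. The paper also kills $\mathrm{Tr}_g(h)$ (and hence $\sigma$, via Lemma \ref{lemma:laplacians}) by taking a divergence of the identity in Lemma \ref{lemma:nablaastnablah}; it routes the computation through the operator $\dd^{g\ast}$ on $TM$-valued one-forms rather than through $\nabla^{g\ast}$ and the Ricci identity, but in both versions every term collapses, via $\nabla^{g\ast}h=0$ and $\Delta_g\mathrm{Tr}_g(h)=f^2\mathrm{Tr}_g(h)$, to a multiple of $\dd\,\mathrm{Tr}_g(h)$, and the whole stage rests on the total coefficient being non-zero --- which, to be fair, neither you nor the paper exhibits explicitly. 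Where you genuinely diverge is the endgame, and there your meta-claim is wrong: a purely local Weitzenb\"ock argument \emph{does} conclude. The paper applies the Weitzenb\"ock formula $\dd^g\dd^{g\ast}+\dd^{g\ast}\dd^g=\nabla^{g\ast}\nabla^g+q^g$ for the exterior covariant derivative on $\Omega^1(M,TM)$; since $h$ is divergence-free and $q^g(h)=-\tfrac{f^2}{4}h_0$, the equation $\nabla^{g\ast}\nabla^g h=\tfrac{f^2}{6}h$ on trace-free $h$ becomes $\dd^{g\ast}\dd^g h=-\tfrac{f^2}{12}h$, and pairing with $h$ gives $\int_M\vert\dd^g h\vert_g^2\,\nu_g=-\tfrac{f^2}{12}\int_M\vert h\vert_g^2\,\nu_g\le 0$, whence $h=0$. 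The rough-Laplacian pairing you dismiss is indeed vacuous, but the $\dd^g$-pairing is not: this is exactly Koiso's trick, and it is how the infinitesimal rigidity statement you want to quote is itself proved.

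Your alternative closing step does reach the right conclusion, but as written it has a gap. Mostow--Prasad rigidity says the moduli space of hyperbolic structures is a point; a one-point moduli space does not by itself imply that the space of \emph{infinitesimal} Einstein deformations vanishes, since infinitesimal deformations may be obstructed and fail to integrate. What you actually need is the infinitesimal theorem --- a compact hyperbolic manifold of dimension $\ge 3$ admits no non-trivial transverse-traceless infinitesimal Einstein deformations (Koiso, cf.\ \cite{Besse}, or Calabi--Weil rigidity in the form $H^1(\pi_1(M),\mathfrak{so}(3,1))=0$ together with the harmonic-theoretic identification of this group with the TT solutions of the linearized equation). Granting that citation, your identification $(\dd_g\mathrm{Ric})(h)=-\tfrac{f^2}{6}h$ is correct and the argument closes, but it replaces the paper's three-line integration by parts with a substantially heavier external theorem.
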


\begin{proof}
Fix a non-trivial three-dimensional compact Einstein Heterotic soliton $(g,f)$ with constant dilaton and a pair $(h,\sigma)\in \mathbb{E}_{g,f}$.  Denote by  $\dd^g \colon \Omega^1(M,TM) \to \Omega^2(M,TM)$ the exterior covariant derivative defined by the Levi-Civita connection of $(M,g)$ on the bundle of one-forms taking values in $TM$. Denote by $\dd^{g\ast} \colon \Omega^2(M,TM) \to \Omega^1(M,TM)$ its formal adjoint. Our starting point is the celebrated Weitzenböck formula:
\begin{equation*}
(\dd^g\dd^{g\ast} + \dd^{g\ast}\dd^{g})(\alpha\otimes v) = \nabla^{g\ast}\nabla^g(\alpha\otimes v) + q^g(\alpha\otimes v)
\end{equation*}
\noindent
where $\alpha \in \Omega^1(M)$ , $v\in \mathfrak{X}(M)$ and $q_g$ is the linear operator given explicitly by:
\begin{equation*}
q^g(\alpha\otimes v) = \mathrm{Ric}^g(\alpha)\otimes v + e_i\otimes \mR^g_{\alpha,e_i}v
\end{equation*}
\noindent
in terms of an orthonormal basis $(e_i)$. We compute:
\begin{eqnarray*}
q^g(h) = q^g(h(e_k)\otimes e_k) = \mathrm{Ric}^g(h(e_k)\otimes e_k + e_i\otimes \mR^g_{e_k,e_i} h(e_k) = -\frac{f^2}{4} h_0
\end{eqnarray*}
\noindent
where $h_0$ is the trace-free part of $h$ and we have used that:
\begin{equation*}
\mathrm{Ric}^g = -\frac{f^2}{6} g\, , \qquad \mR^g_{v_1,v_2} = \frac{f^2}{12} v_1\wedge v_2\, , \quad v_1,v_2\in TM\, .
\end{equation*}
\noindent
We obtain:
\begin{equation*}
\dd^{g\ast}\dd^g h = \nabla^{g\ast}\nabla^g h - \frac{f^2}{4} h_0 = -\frac{f^2}{12} h + \frac{13 f^2}{12} \mathrm{Tr}_g(h) g + \frac{13}{6} \nabla^g \dd \mathrm{Tr}_g(h)
\end{equation*}
\noindent
upon use of Lemma \ref{lemma:nablaastnablah}. We apply now $\dd^{g\ast}\colon \Omega^1(M,TM)\to \mathfrak{X}(M)$ to both sides of the previous equation. Observing that the result of applying $\dd^{g\ast}$ to each monomial in the previous equation is a constant times $\mathrm{Tr}_g(h)$ and that the combination of all terms does not vanish we conclude:
\begin{equation*}
\dd \mathrm{Tr}_g(h) = 0
\end{equation*}
\noindent
which in turn implies $\mathrm{Tr}_g(h) = 0$ since $\Delta_g \mathrm{Tr}_g(h) = f^2 \mathrm{Tr}_g(h)$ and $f\neq0$ by assumption. Hence, by Lemma \ref{lemma:laplacians} we also have $\sigma = 0$ and by Lemma \ref{lemma:nablaastnablah} we finally end up with:
\begin{equation*}
\dd^{g\ast}\dd^g h = -\frac{f^2}{12} h\, .
\end{equation*}
\noindent
This immediately implies $h=0$ since the differential operator of the left hand side of the equation is positive.
\end{proof}

\noindent
The existence of a slice for the action of the diffeomorphism group around $(g,f)$ together with the previous theorem immediately implies the following result.

\begin{corollary}
Three-dimensional compact Einstein Heterotic solitons with constant dilaton are rigid.
\end{corollary}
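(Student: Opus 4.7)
The plan is to deduce the corollary as an immediate consequence of Theorem~\ref{thm:rigidity} together with the definition of rigidity given just before it. Specifically, let $(g,f)$ be a non-trivial three-dimensional compact Einstein Heterotic soliton with constant dilaton. Theorem~\ref{thm:rigidity} asserts that the space of essential deformations satisfies $\mathbb{E}_{g,f}=0$, and this is by definition exactly the statement that $(g,f)$ is rigid; so the corollary follows at once.

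The nontrivial content, captured in the remark following the definition of rigidity, is the geometric interpretation: that $(g,f)$ admits a neighborhood in $\cE^{-1}(0)$ consisting entirely of $\Diff(M)$-translates of $(g,f)$. To establish this stronger statement I would combine Theorem~\ref{thm:rigidity} with the slice theorem from subsection~\ref{sec:local slice}. The slice theorem reduces the problem to showing that $(g,f)$ is an isolated point of $\cE^{-1}(0)\cap \cS_{g,f}$, where $\cS_{g,f} = \cS_g \times C^{\infty}(M)$.

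For the isolation step I would pass to $H^s$-Sobolev completions with $s$ sufficiently large, so that $\cE|_{\cS_{g,f}}$ extends to a smooth map between Banach manifolds whose differential at $(g,f)$ is injective (by $\mathbb{E}_{g,f}=0$). The divergence-free gauge condition $\nabla^{g\ast}h=0$ built into $T_{g,f}\cS_{g,f}$ ensures that, at leading order, the linearized coupled Einstein--dilaton system reduces to the operator $\tfrac12\nabla^{g\ast}\nabla^g\oplus\Delta_g$ acting on $(h,\sigma)$, as can be extracted directly from the explicit expressions in the proofs of Lemmas~\ref{lemma:laplacians} and~\ref{lemma:nablaastnablah}. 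Ellipticity yields the Fredholm property; a Fredholm map with trivial kernel has closed range and admits a bounded left inverse on its range, so a standard argument yields that $(g,f)$ is isolated among $H^s$-zeros. Elliptic regularity promotes the conclusion to the smooth category, and the slice theorem then translates isolation into the desired geometric rigidity in $\Conf(M)/\Diff(M)$.

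The main obstacle is the rigorous verification of the Fredholm property for the gauge-fixed coupled linearization. The higher-order curvature terms introduced by the Heterotic correction $\kappa\,\mR^{g,H}\circ_g \mR^{g,H}$ modify the zeroth-order part of the operator, but—again by Lemma~\ref{lemma:nablaastnablah}—they do not contribute to the principal symbol, so ellipticity is preserved. A careful symbol computation for the full coupled operator, together with confirmation that $\cS_{g,f}$ is a genuine Banach submanifold in the chosen Sobolev completion, is the last step needed to close the argument.
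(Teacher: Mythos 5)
Your first paragraph is exactly the paper's argument: the corollary is an immediate consequence of Theorem~\ref{thm:rigidity} together with the definition of rigidity (which is literally $\mathbb{E}_{g,f}=0$) and the slice theorem of \secref{sec:local slice}, so the proposal is correct and follows the same route. The additional Fredholm/Sobolev discussion is an (unneeded for the statement as defined, though reasonable) elaboration of the remark asserting that $\mathbb{E}_{g,f}=0$ yields genuine isolation of the orbit in $\cE^{-1}(0)$, a point the paper itself leaves at the level of that remark.
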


\begin{remark}
We are not aware of any rigidity result for a compact solution of a supergravity theory, especially in the non-supersymmetric case. 
\end{remark}


\appendix

 

\section{Riemannian curvature in three dimensions}
\label{app:curvature3d}


Let $(M,g)$ be a Riemannian three-manifold. The Riemannian tensor $\mR^g$ of $g$ can be written as follows in terms of its Ricci tensor $\mathrm{Ric}^g$:
\begin{equation}
\label{eq:Riemann3d}
\mR^g_{v_1,v_2} = \frac{s_g}{2} v_1\wedge v_2 + v_2\wedge \mathrm{Ric}^g(v_1) + \mathrm{Ric}^g(v_2)\wedge v_1\, , \qquad \forall v_1,v_2 \in TM\, ,
\end{equation}

\noindent
where $s_g$ is the scalar curvature of $g$. In particular, using the previous formula it is easy to show that the contraction $\mR^g\circ_g \mR^g$, is given by:
\begin{equation}
\label{eq:RgoRg3d}
\mR^g\circ_g \mR^g =  -  \mathrm{Ric}^g\circ \mathrm{Ric}^g +  s_g \mathrm{Ric}^g + (\vert \mathrm{Ric}^g\vert^2_g - \frac{s_g^2}{2}) g\, .
\end{equation}

\noindent
where we have defined:
\begin{equation*}
\mathrm{Ric}^g\circ_g \mathrm{Ric}^g(v_1,v_2) := g(\mathrm{Ric}^g(v_1),\mathrm{Ric}^g(v_2))\, , \qquad \forall v_1, v_2\in TM\, .
\end{equation*}

\noindent
In particular, the norm of $\mathrm{R}^g$ is given by:
\begin{equation*}
\vert \mR^g \vert_g^2 = \frac{1}{2} \mathrm{Tr}_g(\mR^g\circ_g \mR^g)  =  \vert \mathrm{Ric}^g \vert_g^2 - \frac{1}{4} s_g^2	\, .
\end{equation*}

\noindent
Using the previous formulae, we can obtain an explicit expression for $\mR^{g,H} \circ_g\mR^{g,H}$ in terms of standard curvature tensors of $g$ and derivatives of $f = \ast_g H \in C^{\infty}(M)$.  

\begin{lemma}
\label{lemma:vg}
Let $g$ be a Riemannian metric on $M$ and $H\in \Omega^3(M)$. The following equation holds:
\begin{eqnarray}
\label{eq:lemmavg}
& \mR^{g,H} \circ_g \mR^{g,H}  =- \mathrm{Ric}^g \circ \mathrm{Ric}^g + ( s_g - \frac{f^2}{2})  \mathrm{Ric}^g+( \vert \mathrm{Ric}^g \vert_g^2-\frac{s_g^2}{2} + \frac{1}{4}\vert \dd f \vert_g^2 + \frac{1}{8} f^4) g \nonumber \\ & + \frac12[\ast_g\dd f , \mathrm{Ric}^g]  +\frac{1}{4} \dd f \otimes \dd f \, ,
\end{eqnarray}
	
\noindent
where we have set $f  := \ast_g H$.
\end{lemma}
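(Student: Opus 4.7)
My plan is to substitute the identity \eqref{rgh} into the definition of $\mR^{g,H}\circ_g \mR^{g,H}$ and exploit two features of dimension three: the substitution $H = f\nu_g$ reduces every $H$-dependent piece to a first-order differential expression in $f$, and formula \eqref{eq:Riemann3d} expresses $\mR^g$ in terms of $\mathrm{Ric}^g$ alone.

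First I would simplify the non-Riemann pieces of \eqref{rgh}. Since $\nabla^g \nu_g=0$, one has $(\nabla^g_u H)(v) = (uf)(v\lrcorner\nu_g)$, while the three-dimensional identity $u\times(v\times w) = g(u,w)v - g(u,v)w$ yields $[H_u, H_v] = f^2\,(u\wedge v)$ as a skew endomorphism. This produces the decomposition
\[
\mR^{g,H}_{u,v} = A_{u,v} + B_{u,v} + C_{u,v},
\]
with $A_{u,v} := \mR^g_{u,v}$, $B_{u,v} := \tfrac{1}{2}\bigl((vf)\,u\lrcorner\nu_g - (uf)\,v\lrcorner\nu_g\bigr)$ and $C_{u,v} := \tfrac{f^2}{4}\,u\wedge v$. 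Evaluating $(\mR^{g,H}\circ_g\mR^{g,H})(v_1,v_2) = \sum_a \langle \mR^{g,H}_{v_1,u_a}, \mR^{g,H}_{v_2,u_a}\rangle_g$ in an orthonormal frame $(u_a)$ then splits the computation into nine blocks indexed by pairs in $\{A,B,C\}$.

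Each block will be evaluated with the three-dimensional inner-product identities
\[
\langle u\lrcorner\nu_g, v\lrcorner\nu_g\rangle_g = g(u,v),\qquad \langle u\wedge v, w\lrcorner\nu_g\rangle_g = \nu_g(w,u,v),\qquad \langle \alpha, u\wedge v\rangle_g = \alpha(u,v)
\]
for every 2-form $\alpha$, together with the sign $\sum_a \mR^g(v_1,u_a,v_2,u_a) = -\mathrm{Ric}^g(v_1,v_2)$. The block $\langle A,A\rangle$ reproduces \eqref{eq:RgoRg3d}. The symmetric cross-block $\langle A,C\rangle+\langle C,A\rangle$ contributes $-\tfrac{f^2}{2}\mathrm{Ric}^g$, the block $\langle C,C\rangle$ gives $\tfrac{f^4}{8}g$, and $\langle B,B\rangle$, after invoking $\sum_a (u_a f)\,g(v,u_a) = vf$ and $\sum_a g(u_a,u_a) = 3$, produces $\tfrac{1}{4}|\dd f|_g^2\,g + \tfrac{1}{4}\,\dd f\otimes \dd f$. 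The block $\langle B,C\rangle+\langle C,B\rangle$ vanishes because it is proportional to $\nu_g(v_1,v_2,X) + \nu_g(v_2,v_1,X) = 0$, where $X := (\dd f)^\sharp$.

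The main obstacle will be the symmetric cross-block $\langle A,B\rangle+\langle B,A\rangle$. After substituting \eqref{eq:Riemann3d} for $\mR^g_{v_1,u_a}$, one contribution should vanish by the symmetry of $\mathrm{Ric}^g$ (via $\sum_a \nu_g(u_a, \mathrm{Ric}^g(u_a), v_1) = 0$), and a further contribution should drop out upon $(v_1,v_2)$-symmetrization because it is skew in $(v_1,v_2)$. I expect the surviving piece to be
\[
\tfrac{1}{2}\bigl(\nu_g(v_2, X, \mathrm{Ric}^g(v_1)) + \nu_g(v_1, X, \mathrm{Ric}^g(v_2))\bigr),
\]
which coincides with $\tfrac{1}{2}[\ast_g\dd f, \mathrm{Ric}^g](v_1,v_2)$ once one recognizes $\ast_g\dd f = X\lrcorner\nu_g$ as the skew endomorphism $w\mapsto X\times w$ and expands the commutator. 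Adding the six non-zero blocks will then produce the identity in the statement.
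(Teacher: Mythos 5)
Your proposal is correct and follows essentially the same route as the paper: the paper likewise specializes \eqref{rgh} to $H=f\nu_g$, obtaining exactly your decomposition $\mR^{g,H}_{v_1,v_2}=\mR^g_{v_1,v_2}-\tfrac12(v_1(f)\ast_g v_2-v_2(f)\ast_g v_1)+\tfrac{f^2}{4}v_1\wedge v_2$, and then substitutes into $\mR^{g,H}\circ_g\mR^{g,H}$ and expands. Your block-by-block evaluation (including the $\langle A,B\rangle+\langle B,A\rangle$ cross term producing the commutator $\tfrac12[\ast_g\dd f,\mathrm{Ric}^g]$) simply makes explicit the expansion that the paper leaves as a computation.
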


\begin{remark}
Here $[\ast_g\dd f , \mathrm{Ric}^g] \in \Gamma(\End(TM))$ denotes the standard commutator of endomorphisms, understanding both $\ast_g\dd f$ and $\mathrm{Ric}^g$ as endomorphisms of $TM$ by means of the musical isomorphisms determined by $g$.
\end{remark}

\begin{proof}
Since $H=f \nu_g$, we have:
\begin{equation*}
\nabla_{v_1}^{g,H} v_2=\nabla_{v_1}^g v_2-\frac{f}{2} \ast_g (v_1^\flat \wedge v_2^\flat)^{\sharp_g} \, , \qquad v_1,v_2 \in \mathfrak{X}(M)\, .
\end{equation*}
	
\noindent
Using the previous formula we compute:
\begin{equation*}
\mR^{g,H}_{v_1,v_2}  =\mR^g_{v_1,v_2} -\frac{1}{2}   (v_1(f) \ast_g v_2-v_2(f) \ast_g v_1)    + \frac{f^2}{4}v_1\wedge v_2 \, , \qquad v_1,v_2 \in \mathfrak{X}(M) \, .
\end{equation*}
	
\noindent
Substituting this expression into $\mR^{g,H} \circ\mR^{g,H}$ and expanding we obtain \eqref{eq:lemmavg} and we conclude. 
\end{proof} 

\noindent
Using the previous lemma we can compute one-half of the trace of $\mR^{g,H} \circ_g \mR^{g,H}$, which corresponds to twice the norm of $\mR^{g,H}$, obtaining:
\begin{equation*}
\vert \mR^{g,H}\vert^2_{g}  = \frac{1}{2} \mathrm{Tr}_g (\mR^{g,H} \circ_g \mR^{g,H})= \vert \mathrm{Ric}^g\vert^2_g - \frac{1}{4} s_g^2 - \frac{1}{4} f^2 s_g + \frac12 \vert\dd f\vert^2_g + \frac{3}{16} f^4\, .
\end{equation*}

\noindent
This formula is tacitly used in several computations throughout the paper. 

\phantomsection
\bibliographystyle{JHEP}

\begin{thebibliography}{100}
	
 
\bibitem{Aubin}{T. Aubin, {\em Some Nonlinear Problems in Riemannian Geometry}, (Springer monographs in mathematics, 1998.}

\bibitem{Baraglia:2013wua}{D.~Baraglia and P.~Hekmati, {\sl Transitive Courant Algebroids, String Structures and T-duality}, Adv.\ Theor.\ Math.\ Phys.\  {\bf 19} (2015), 613.}

\bibitem{BeckerBeckerSchwarz}{ K. Becker, M. Becker and J. H. Schwarz, {\sl String Theory and M-Theory: A Modern Introduction}, Cambridge University Press, 2006.}

\bibitem{BRI}{E. Bergshoeff and M. de Roo, {\sl Supersymmetric Chern-Simons Terms in Ten-dimensions}, Phys. Lett. B {\bf 218} (1989), 210.}

\bibitem{BRII}{E. Bergshoeff and M. de Roo, {\sl The Quartic Effective Action of the Heterotic String and Supersymmetry}, Nucl. Phys. B {\bf 328} (1989), 439.}

\bibitem{Besse}{A. L. Besse, {\em Einstein Manifolds}, Springer 1987.}

\bibitem{Buckland}{J. Buckland, {\sl Short-time existence of solutions to the cross curvature flow on 3-manifolds}, Proceedings of the AMS {\bf 134}
no. 6, (2005), 1803--1807.}
 
\bibitem{CarforaGuenther}{M. Carfora and C. Guenther, {\sl Scaling and Entropy for the RG-2 Flow}, Commun. Math. Phys. {\bf 378} (2020), 369--399.}

\bibitem{Cao}{H.-D. Cao, {\sl Geometry of Ricci solitons}, Chin. Ann. Math. 2006, 27B, 121--142.}

\bibitem{DiezRudolph}{T. Diez and G. Rudolph, {\sl Slice theorem and orbit type stratification in infinite dimensions}, Differential Geometry and its Applications, {\bf 65} (2019), 176 -- 211.}

\bibitem{Fei}{T. Fei, {\sl A Construction of Non-Kähler Calabi-Yau Manifolds and New Solutions to the Strominger System}, Advances in Mathematics
{\bf 302} (2016), 529--550.}

\bibitem{Fei:2018mzf}{T.~Fei, B.~Guo and D.~H.~Phong, {\sl Parabolic Dimensional Reductions of 11D Supergravity},
Commun. Math. Phys. {\bf 369} (2019), 811--836.}

\bibitem{Fei:2020kkl}{T.~Fei, D.~H.~Phong, S.~Picard and X.~Zhang, {\sl Geometric Flows for the Type IIA String}, \arxiv{2011.03662}.}

\bibitem{Fei:2020nwv}{T.~Fei, D.~H.~Phong, S.~Picard and X.~Zhang, {\sl Estimates for a geometric flow for the Type IIB string}, Math. Ann. 382, 1935--1955 (2022).}

\bibitem{FeiYau}{T. Fei and S. T. Yau, {\sl Invariant Solutions to the Strominger System on Complex Lie Groups and Their Quotients}, Commun. Math. Phys. {\bf 338} (2015), 1183--1195.}

\bibitem{FernandezIvanovUgarteVillacampa}{M. Fernández, S. Ivanov, L. Ugarte and R. Villacampa, {\sl Non-Kähler heterotic-string compactifications with non-zero fluxes and constant dilaton}, Commun. Math. Phys. {\bf 288} (2009) 677--697.}

\bibitem{FernandezIvanovUgarteVassilev}{M. Fernández, S. Ivanov, L. Ugarte and D. Vassilev, {\sl Non-Kähler heterotic string solutions with non-zero fluxes and non-constant dilaton}, JHEP {\bf 6} (2014) 73.}

\bibitem{FuTsengYau}{J. X. Fu, L. S. Tseng and S. T. Yau, {\sl Local heterotic torsional models}, Commun. Math. Phys. {\bf 289} (2009) 1151--1169.}

\bibitem{FuYau}{J. X. Fu and S. T. Yau, {\sl The theory of superstring with flux on non-Kähler manifolds and the complex Monge-Ampère}, J. Differ. Geom. {\bf 78} (2008) 369--428.}

\bibitem{Garcia-Fernandez:2016ofz}{M.~Garc\'ia-Fern\'andez, {\sl Ricci flow, Killing spinors, and T-duality in generalized geometry}, Adv. Math. \textbf{350} (2019), 1059--1108.}

\bibitem{GarciaMarinoVazquez}{E. García-Río, R. Mariño-Villara, M. E. Vázquez-Abal and R. Vázquez-Lorenzo,  {\sl Fixed points and steady solitons for the two-loop renormalization group flow}, J. Fixed Point Theory Appl. Accepted.}


\bibitem{MarioRaul}{M. Garc\'ia-Fern\'andez and R. Gonzalez Molina,  {\sl Harmonic metrics for the Hull-Strominger system and stability}, \emph{preprint} \arxiv{2301.08236}.}

\bibitem{MarioRaulII}{M. Garc\'ia-Fern\'andez and R. Gonzalez Molina,  {\sl Futaki Invariants and Yau's Conjecture on the Hull-Strominger system}, \emph{preprint} \arxiv{2303.05274}.}  

\bibitem{Garcia-FernandezJordanStreets}{M. Garcia-Fernández, J. Jordan, and J. Streets,  {\sl Non-Kähler Calabi-Yau geometry and pluriclosed flow}, \emph{preprint} \arxiv{2106.13716}.}  

\bibitem{Garcia-Fernandez:2018emx}{M.~Garc\'ia-Fern\'andez, R.~Rubio, C.~Shahbazi and C.~Tipler, {\sl Canonical metrics on holomorphic Courant algebroids}, Proc. Lond. Math. Soc. \textbf{125} no.3 (2022), 700--758.}

\bibitem{FernandezStreetsLibro}{M. Garc\'ia-Fern\'andez and J. Streets,  {\it Generalized Ricci Flow}, AMS University Lecture Series, 2021.}

\bibitem{GGI}{K. Gimre, C. Guenther, and J. Isenberg, {\sl A geometric introduction to the two-loop renormalization group flow}, J. Fixed Point Theory Appl., {\bf 14} (2013), 3--20.}

\bibitem{GimreGuentherIsenberg}{K. Gimre, C. Guenther and J. Isenberg, {\sl Short-time existence for the second order renormalization group flow in general dimensions}, Proc. Amer. Math. Soc. {\bf 143} (2015), 4397--4401.}

\bibitem{GlickensteinWu}{D. Glickenstein, L. Wu, {\sl Soliton metrics for two-loop renormalization group flow on 3D unimodular Lie groups}, J. Fixed Point Theory Appl. 19, 1977--1982 (2017).}

\bibitem{Hamilton}{R. Hamilton, \emph{Three-manifolds with positive Ricci curvature}, J. Differ. Geom. {\bf 17} (1982), 255--306.}


\bibitem{Hassler}{F. Hassler and Thomas B. Rochais, {\sl O(D,D)-covariant two-loop \ensuremath{\beta}-functions and Poisson-Lie T-duality}, JHEP \textbf{10} (2021), 210.} 

\bibitem{Hull}{C. Hull, {\sl Compactifications of the Heterotic Superstring}, Phys. Lett. B {\bf 191} (1986) 357--364.}

\bibitem{KazdanWarner}{J. Kazdan and F. Warner, {\sl Scalar curvature and conformal deformation of Riemannian structure}, J. Differ. Geom. {\bf 10} (1975), 113--134.}

\bibitem{Killingback}{T. P. Killingback, {\sl World-sheet anomalies and loop geometry}, Nuclear Phys. B 288 (1987), 578--588.}

\bibitem{Melnikov:2014ywa}{I.~V.~Melnikov, R.~Minasian and S.~Sethi, {\sl Heterotic fluxes and supersymmetry}, JHEP \textbf{6} (2014), 174.}

\bibitem{Metsaev:1987bc}{R.~R.~Metsaev and A.~A.~Tseytlin, {\sl Two loop beta function for the generalized bosonic sigma model}, Phys. Lett. B \textbf{191} (1987), 354--362.}

\bibitem{Metsaev:1987zx}{R.~R.~Metsaev and A.~A.~Tseytlin, {\sl Order alpha-prime (Two Loop) Equivalence of the String Equations of Motion and the Sigma Model Weyl Invariance Conditions: Dependence on the Dilaton and the Antisymmetric Tensor}, Nucl. Phys. B \textbf{293} (1987), 385--419.}

\bibitem{Milnor}{J. Milnor, {\sl Curvatures of Left Invariant Metrics on Lie Groups}, Adv. Math. {\bf 21} (1976), 293--329.}

\bibitem{MilnorII}{J. Milnor, {\sl Sommes de variétés différentiables et structures différentiables des sphères}, Bull. Soc. Math. de France, {\bf 87} (1959), 439 -- 444.}

\bibitem{Moroianu:2021kit}{A.~Moroianu, \'A.~Murcia and C.~S.~Shahbazi, {\sl Heterotic solitons on four-manifolds}, New York J. Math. {\bf 28} (2022), 1463--1497.}

\bibitem{Oliynyk:2009rh}{T.~A.~Oliynyk, {\sl The 2nd order renormalization group flow for non-linear sigma models in 2 dimensions}, Class. Quant. Grav. \textbf{26} (2009), 105020.}

\bibitem{Oliynyk}{T. Oliynyk, V. Suneeta, and E. Woolgar, {\it A gradient flow for worldsheet nonlinear sigma models}, Nuclear Phys. B {\bf 739} no. 3 (2006), 441--458. MR 2214659 53, 68, 111, 112.}

\bibitem{Phong:2018wgn}{D.~H.~Phong, S.~Picard and X.~Zhang, \emph{New curvature flows in complex geometry}, Surveys in Differential Geometry {\bf 22} (2017).}

\bibitem{Anomaly}{D.~H.~Phong, S.~Picard, and X.~Zhang, \emph{Anomaly flows}, Communications in Analysis and Geometry Volume 26, Number 4, 955--1008, 2018.}

\bibitem{AnomalyII}{D.~H.~Phong, S.~Picard, and X.~Zhang, \emph{The Anomaly flow and the Fu-Yau equation}, Ann. PDE 4, 13 (2018).}

\bibitem{PhongReview}{D.~H.~Phong, \emph{Geometric flows from unified string theories}, Contribution to Surveys in Differential Geometry, "Forty Years of Ricci flow", edited by H.D. Cao, R. Hamilton, and S. T. Yau.}

\bibitem{PodestaSpiro}{F. Podestà and A. Spiro , {\sl On Moduli Spaces of Ricci Solitons}, J. Geom. Anal. {\bf 25} (2015), 1157-–1174.}

\bibitem{Polchinski:1998rq}{J.~Polchinski, {\it String theory. Vol. 1: An introduction to the bosonic string}, Cambridge Monographs on Mathematical Physics (1998).}

\bibitem{Streets0}{J. Streets, {\sl Regularity and expanding entropy for connection Ricci flow}, J. Geom. Phys. {\bf 58} no. 7 (2008), 900--912. MR 2426247 68, 87, 118.}

 \bibitem{Strominger}{A. Strominger, {\sl Superstrings with torsion}, Nucl. Phys. B274 (2) (1986), 253--284.}
 
\bibitem{Redden}{C. Redden, {\sl String structures and canonical 3-forms}, Pacific J. Math. {\bf 249} (2011) 447--484.}

\bibitem{Rosen}{R. H. Rosen, {\sl A weak form of the star conjecture for manifolds}, Notices Amer. Math. Soc. {\bf 7} (1960), 380. Abstract 570--28.}

\bibitem{Severa:2018pag}{P.~\v{S}evera and F.~Valach, \emph{Courant Algebroids, Poisson Lie T-Duality, and Type II Supergravities},
Commun. Math. Phys. \textbf{375} (2020) no. 1, 307--344}.

\bibitem{StreetsT}{J. Streets, {\sl Generalized geometry, T-duality, and renormalization group flow}, Journal of Geometry and Physics
Volume 114, 2017, 506--522.}

\bibitem{Streets}{J. Streets, {\sl Generalized Kähler Ricci flow and the classification of nondegenerate generalized Kähler surfaces}, Adv. Math. 316 (2017), 187--215.}

\bibitem{StreetsSoliton}{J. Streets, {\sl Classification of solitons for pluriclosed flow on complex surfaces}, Math. Ann., 375(3-4):1555--1595, 2019.}

\bibitem{StreetsII}{J. Streets, {\sl Scalar curvature, entropy, and generalized Ricci flow}, \emph{preprint} \arxiv{2207.13197}.}
 
\bibitem{StreetsTian}{J. Streets and G. Tian, {\sl A parabolic flow of pluriclosed metrics}, Int. Math. Res. Not. IMRN, (16): 3101--3133, 2010.}

\bibitem{StreetsTianII}{J. Streets and G. Tian, {\sl Regularity results for pluriclosed flow}, Geom. Topol. 17(4): 2389--2429, 2013.}

\bibitem{StreetsTianIII}{J. Streets and G. Tian, {\sl Generalized Kähler geometry and the pluriclosed flow}, Nuclear Phys. B 858, 366--376, 2012.}

\bibitem{StreetsUstinovskiySoliton}{J. Streets and Y. Ustinovskiy, {\sl Classification of generalized Kähler-Ricci solitons on complex surfaces}, Communications on Pure and Applied Mathematics, 2020.}

\bibitem{StreetsUstinovskiySolitonII}{J. Streets and Y. Ustinovskiy, {\sl The Gibbons-Hawking ansatz in generalized Kähler geometry}, Communications in Mathematical Physics volume 391, 707--778 (2022).}

\bibitem{Waldorf}{K. Waldorf, {\sl String connections and Chern–Simons theory}, Trans. Amer. Math. Soc. 365 (2013), no. 8, 4393--4432.}
\end{thebibliography}


\end{document}